\documentclass[10pt]{amsart}
\usepackage{graphicx}
\usepackage{amssymb,amsmath,amsthm}
\usepackage{epstopdf}
\usepackage{mdframed}
\usepackage{empheq}
\usepackage{bm}
\usepackage{hyperref}
\usepackage{enumitem}
\usepackage{subcaption}
\usepackage[round]{natbib}
\setcitestyle{numbers,square}

\DeclareGraphicsRule{.tif}{png}{.png}{`convert #1 `dirname #1`/`basename #1 .tif`.png}

\setlength{\topmargin}{-.2in} 
\setlength{\marginparsep}{0.1in}
\setlength{\marginparwidth}{0.5in} 
\setlength{\textwidth}{6.8in} 
\setlength{\textheight}{8.2in}
\setlength{\oddsidemargin}{-.2in}
\setlength{\evensidemargin}{-.2in}

\newtheorem{theorem}{Theorem}

\newtheorem{lemma}[theorem]{Lemma}
\newtheorem{proposition}[theorem]{Proposition}
\newtheorem{remark}[theorem]{Remark}



\newcommand\by{\underline y}
\newcommand\bz{\underline z}
\newcommand\bx{\underline x}



\newcommand\cF{\mathcal F}

\newcommand\cL{\mathcal L}

\newcommand\cP{\mathcal P}

\newcommand\cW{\mathcal W}



\newcommand\NN{\mathbb N}
\newcommand\RR{\mathbb R}




\newcommand{\x}{\bx}
\newcommand{\X}{\underline X}
\newcommand{\Y}{\underline Y}

\newcommand{\xdim}{\ell}
\newcommand{\ydim}{q}

\makeatletter
\renewcommand\l@paragraph[2]{}
\renewcommand\l@subparagraph[2]{}
\makeatother
\setcounter{tocdepth}{2}
\setcounter{secnumdepth}{3}


\DeclareMathOperator*{\argmin}{arg\,min}

\usepackage{xcolor}
\definecolor{mypurple}{RGB}{140,0,255}
\definecolor{myred}{rgb}{255,0,0}

\definecolor{mydarkturquoise}{RGB}{0,206,209}
\definecolor{mydeeppink}{RGB}{255,20,147}
\definecolor{darkblue}{RGB}{0,0,140}
\definecolor{blue2}{RGB}{0,0,0}
\definecolor{middleblue}{RGB}{0,0,71}
\definecolor{light-gray}{gray}{0.9}

\definecolor{ProcessBlue}{cmyk}{1,0,0,0.40}
\definecolor{Black}{cmyk}{0,0,0,1}
\definecolor{Red}{cmyk}{0,1,1,0.2}
\definecolor{Green}{cmyk}{0.9,0,1,0}
\definecolor{Orange}{cmyk}{0,0.61,0.87,0.5}
\definecolor{Fuchsia}{cmyk}{0.47,0.91,0,0.06}
\definecolor{PineGreen}{cmyk}{0.92,0,0.59,0.30}

\title[Convergence of large population games to MFGs with interaction through the controls]{Convergence of large population games to mean field games with interaction through the controls}

\author{Mathieu Lauri\`ere  \& Ludovic Tangpi}

\thanks{mathieu.lauriere@nyu.edu, NYU Shanghai (work done while at Princeton University, ORFE); ludovic.tangpi@princeton.edu, Princeton University, ORFE. The work of M. Lauriere was supported by ARO grant AWD1005491 and NSF award AWD1005433. The work of L. Tangpi was supported by NSF grant DMS-2005832.}	
\keywords{Large population games, mean field games, interaction through controls, propagation of chaos, FBSDE, McKean-Vlasov equations, Pontryagin's maximum principle, concentration of measure, PDEs on Wasserstein space.}
\date{\today}
\subjclass[2010]{60F25, 91A06, 91A13, 60J60, 28C20, 60H20, 35B40.}

\begin{document}

\maketitle

\begin{abstract}
This work considers stochastic differential games with a large number of players, whose costs and dynamics interact through the empirical distribution of both their states and their controls.   
We develop a new framework to prove convergence of finite-player games to the asymptotic mean field game. 
Our approach is based on the concept of propagation of chaos for forward and backward weakly interacting particles which we investigate by stochastic analysis methods, and which appear to be of independent interest.
These propagation of chaos arguments allow to derive moment and concentration bounds for the convergence of Nash equilibria.
\end{abstract}

\tableofcontents

\section{Introduction}
\label{sec:intro}

The motivation behind this paper is to present a systematic method to investigate the asymptotic behavior of a class of symmetric $N$-player stochastic differential games in continuous time as the number of players $N$ becomes large.
To be more precise, let us briefly describe such a game in the non-cooperative case.
We consider a game in which each player (or agent) $i \in \{1, \dots, N\}$ controls a diffusion process $X^{i,N}$ whose evolution is given by
\begin{equation*}
	dX^{i,N}_t = b\Big(t, X^{i,N}_t, \alpha^{i,N}_t, \frac1N\sum_{j=1}^N\delta_{(X^{j,N}_t, \alpha^{j,N}_t)}  \Big)\,dt + \sigma \,dW^i_t
\end{equation*}
for some independent Brownian motions $W^1, \dots, W^N$ where $\alpha^{i,N}$ is a control process chosen by player $i$ and $\delta_x$ is the Dirac delta mass at $x$.
The measurability of $\alpha^{i,N}$ will be precised below.
Agent $i$ tries to minimize an individual cost
\begin{equation}
\label{eq:cost}
		J(\alpha^i; \underline \alpha^{-i}):=	E\Big[\int_0^Tf\Big(t, X_t^{i,N}, \alpha^{i,N}_t, \frac1N\sum_{j=1}^N\delta_{(X^{j,N}_t, \alpha^{j,N}_t)} \Big)\,dt + g\big(X^{i,N}_T, \frac1N\sum_{j=1}^N\delta_{X^{j,N}_T} \big) \Big]
\end{equation}
where we denote $\underline \alpha^{-i}:= (\alpha^1,\dots,\alpha^{i-1}, \alpha^{i+1},\dots, \alpha^N)$.
In this context, it is natural to investigate the concept of \emph{Nash equilibrium} $(\hat\alpha^{1,N},\dots, \hat\alpha^{N,N})$.
See ~\S~\ref{subsec:model} for definitions and a more precise description of the model.
Unfortunately, as the number of players becomes large, the $N$-Nash equilibrium becomes analytically and (especially) numerically intractable.
The groundbreaking idea of Lasry $\&$ Lions \cite{MR2295621} and \citet{MR2346927} is to argue that, heuristically, for such a symmetric game, when $N$ goes to infinity, $\hat\alpha^{i,N}$ should converge to a so-called \emph{mean field equilibrium} $\hat\alpha^i$, which is defined as follows. 
For a fixed (measurable) measure flow $(\xi_t)_{t \ge 0}$ with second marginals $(\mu_t)_{t\ge0}$ let $(\hat\alpha^\xi_t)_{t \ge 0}$ be a solution of the stochastic control problem
\begin{equation*}
	\begin{cases}
	\displaystyle
		\inf_{\alpha}E\Big[\int_0^Tf(t, X^\alpha_t, \alpha_t,\xi_t)\,dt + g(X^\alpha_T, \mu_T)\Big]\\
		dX^\alpha_t = b(t, X^\alpha_t, \alpha_t, \xi_t)\,dt + \sigma\,dW_t^i.
	\end{cases}
\end{equation*}
A flow of measures $\hat\xi$ is an equilibrium flow if it satisfies the following \emph{consistency condition}: the law of $(X_t^{\hat\alpha^{\hat\xi}}, \hat\alpha^{\hat\xi}_t)$ equals $\hat\xi_t$ for every $t\in [0,T]$; the associated control $\hat\alpha^i$ is an equilibrium control. 
The question at the heart of the present paper is to know \emph{how far $\hat\alpha^i$ is from $\hat\alpha^{i,N}$}. 
In other terms, we are interested in an estimation of the ``error'' $|\hat\alpha^{i,N} - \hat\alpha^i|$.

It is only after more than a decade of intensive research on mean field games that the intriguing heuristics mentioned above have been put into rigorous mathematical ground and in satisfactory generality. Notably, the works of \citet{MR3520014} and \citet{Fischer17} proved \emph{convergence} results on the $N$-Nash equilibria to the mean field equilibrium as $N$ goes to \emph{infinity} for open-loop controls. Using a PDE on the Wasserstein space called the master equation,  \citet{carda15} proved convergence for closed-loop controls, even in the presence of common noise. 
We also refer to works by \citet{LackerAAP}, \citet{Del-Lac-Ram_Concent,Del-Lac-Ram-19}, \citet{Carda_AMO17} for more recent progress on this convergence question.
Anticipating our brief discussion of these papers in the soon-to-come literature review (see~\S~\ref{subsec:literature}), let us mention at this  point that with the exception of \cite{Del-Lac-Ram_Concent}, none of the above cited papers investigates non-asymptotic results, nor do their settings cover games with \emph{interactions through the distribution of controls} (or ``control interactions'' for short).

Games with control interactions, sometimes called ``extended'', occur when the dynamics or the cost function of player $i$ may explicitly depend on the empirical distribution of the \emph{controls} of the other players, and not just on their respective states.
Such games were first introduced by Gomes et. al. \cite{MR3160525} and their investigation quickly picked-up momentum due to their relevance in various problems e.g. in economics and finance.
References are provided below (see~\S~\ref{subsec:literature}).
One important aspect of our analysis will be to include the treatment of such games.

\subsection{Main results: informal statements and method}
The main result of this paper is to show that (even) for games with interactions through the controls, under sufficient regularity and convexity assumptions on the coefficients of the game one obtains a non-asymptotic estimate of the ``error'' term $E[|\hat\alpha^{i,N}_t - \hat\alpha_t^i|^2]$ and consequently convergence of $\hat\alpha^{i,N}$ to $\hat\alpha^i$. 
This moment estimate is bolstered by concentration inequalities (some of which dimension-free) notably bounding the probability that the Wasserstein distance between the empirical measure of the $N$-Nash equilibrium and the law of the mean-field equilibrium exceeds a given threshold.
The price to pay for these non-asymptotic bounds is to require either small enough time horizon or additional monotonicity conditions on the coefficients.
The contribution of this article is also methodological.
In fact, we design a three-step approach to bound the error:
\begin{itemize}
	\item[(i)] Characterize the solution of the $N$-player game by a system of forward-backward stochastic differential equations (FBSDE).
	\item[(ii)] Investigate asymptotic properties of the system of equations, showing in particular that it converges to a McKean-Vlasov FBSDE (see definition below).
	\item[(iii)]Show that the limiting McKean-Vlasov FBSDE characterizes the mean field equilibrium.
\end{itemize}
To achieve step (ii), we further develop the theory of \emph{backward propagation of chaos} initiated by the authors in \cite{backward-chaos}.
The idea here is that, roughly speaking, the FBSDEs characterizing the $N$-player game can be interpreted (themselves) as a system of weakly interacting particles evolving forward and backward in time.
A substantial part of the article is devoted to the investigation of non-asymptotic, strong propagation of chaos type results for such particle systems.
At the purely probabilistic level, these results extend the original ideas of Sznitman \cite{MR1108185} introduced for interacting (forward) particles to fully coupled systems of interacting forward and backward particles.
Due to the independent relevance of these convergence results, this part of the paper is presented in a self-contained manner and so that it can be read separately.  
In fact, in this article, aside from the (non-cooperative) large population games discussed so far, we illustrate applications of this ``forward-backward propagation of chaos'' by proving 
convergence of a system of second order parabolic partial differential equations written on an Euclidean space to a so-called \emph{master equation}, a second order PDE written on the Wasserstein space.
This allows for convergence results to PDEs on infinite dimensional spaces similar to the ones derived by \mbox{\citet{carda15}}, with different types of nonlinearities. 

\subsection{Literature review}
\label{subsec:literature}
The investigation of the limit theory in large population games started with the works of Lasry $\&$ Lions \cite{MR2269875,MR2295621} further extended by \citet{MR3127148}, Bardi $\&$ Priuli \cite{Bardi-Priuli} and \citet{Gomes-Mohr-Sou13}.
These papers share the limitations of treating either problems with linear coefficients or assuming that agents have controls which are not allowed to depend on other players' states.
In the breakthrough works of \citet{MR3520014} and \citet{Fischer17}, 
the authors prove rather general convergence results for the \emph{empirical measure} of the states of the agents at equilibrium  using probabilistic techniques.
We also refer to \citet{LackerAAP} for interesting further developments, notably for the case of \emph{closed-loop} controls.
The analyses of these authors use the notion of relaxed controls and study associated controlled martingale problems.
This technique seems hard to extend to games with control interactions considered here, and it provides compactness results rather than convergence rates.
However, one central advantage of this approach is that it does not assume uniqueness of the mean field equilibrium, which we do (at least in our main theorem).
This shortcoming is shared with the PDE-based approaches of \citet{Del-Lac-Ram_Concent,Del-Lac-Ram-19} and \citet{cardaliaguet2019master} (but some of these works additionally need existence and bounds on the first and second order derivatives of the solution of the associated master equation). 
In fact, our approach is related to these methods in that they both rely on optimality conditions characterizing the equilibrium. However, instead of using optimality conditions phrased in terms of PDEs, we use FBSDEs characterizations. 
As a result, the technique developed here is a purely probabilistic one and we do not restrict ourselves to Markovian controls as in the PDE approaches.

Beyond its methodological aspects, our paper contributes to the large population game and the mean field game literature by its analysis of games with control interactions.
Mean field games with such interactions are sometimes referred to as ``extended MFG'' or ``MFG of controls'' and have been introduced by~\citet{MR3160525,MR3489048}. 
Interaction through the controls' distribution is particularly relevant in economics and finance, see e.g.~\cite{MR3359708,MR3755719,MR3805247} and~\cite[Section 3.3.1]{MR3195844} (see also~\cite[Sections 1.3.2 and 4.7.1]{MR3752669}).  
Some aspects of the PDE approach and the probabilistic approach to such games have been treated respectively in~\cite{MR3941633,BonnansHadikhanlooPfeiffer2019,Kobeissi2019} and in~\cite{MR3325272}. 
Note also that this paper focuses on \emph{open--loop} equilibria.
The convergence problem for \emph{closed--loop} equilibria is considered by \cite{LackerAAP,cardaliaguet2019master} using very different methods.
Furthermore, let us finally point out that the method developed in this paper also apply to non-cooperative games and the results have natural PDE interpretation.
These connections are presented in details in the ArXiv version of the paper \cite{pontryagin21}.

\subsection{Organization of the paper}
In the next section we present the probabilistic setting and formally state our main results pertaining to the convergence of the $N$-Nash equilibrium to the mean field equilibrium.
The emphasis is put on non-asymptotic results and concentration estimates.
Section \ref{sec:pontryagin} is dedicated to the discussion of versions of Pontryagin's maximum principle for games with interaction through the controls. %
The investigation of propagation of chaos for forward-backward interacting particles is carried out in Section \ref{sec:fbsde chaos}.
These elements are put together in Section \ref{sec:limits} to prove the main results stated in Section \ref{sec:main results}.

\section{Main results: formal statements}
\label{sec:main results}
Let $T>0$ and $d \in \mathbb{N}$ be fixed, and denote by  $(\Omega, \mathcal{F},P)$ a probabilitty space carrying a sequence of independent $\mathbb{R}^d$-valued Brownian motions $(W^i)_{i\in \mathbb{N}}$.
For every positive integer $N$, let $W^1, \dots, W^N$ be $N$ independent copies of $W$ and $\mathcal{ F}_0$ be an initial $\sigma$-field independent of $W^1,\dots, W^N$.
We equip $\Omega$ with the filtration $\mathbb{F}^N:=(\mathcal{F}_t^N)_{t\in[0,T]}$, which is the completion of the filtration generated by $W^1, \dots, W^N$ and $\mathcal{F}_0$, and we further denote by $\mathbb{F}^i:=(\mathcal{F}_t^i)_{t\in[0,T]}$, which is the completion of the filtration generated by $W^i$ and $\mathcal{F}_0$. 
Without further mention, we will always use the identifications
\begin{equation*}
	W \equiv W^1 \quad \text{and}\quad \cF \equiv \cF^1.
\end{equation*}

Given a vector $\underline x := (x^1, \dots, x^N)\in (\mathbb{R}^n)^N$, for any $n \in \mathbb{N}$, denote by 
\begin{equation*}
	L^N(\x) := \frac 1N \sum_{j=1}^N\delta_{x^j}
\end{equation*}
the empirical measures associated to $\underline x$.
It is clear that $L^N(\x)$ 
belongs to $\cP_p(\mathbb{R}^n)$, the set of probability measures on $\mathbb{R}^n$ with finite $p$-moments.
Given a random variable $X$, we denote by
\begin{equation*}
	\cL(X) \quad \text{the law of $X$ with respect to $P$}.
\end{equation*}
Throughout the paper, $C$ denotes a generic strictly positive constant.
In the computations, the constant $C$ can change from line to line, but this will not always be mentioned. 
\emph{However, $C$ will never depend on $N$.}

Let us now formally state the main results of this work.
\subsection{The $N$-player game}
\label{subsec:model}
We consider an $N$-agent game where player $i$ chooses an admissible strategy $\alpha^i$ to control her state process, which has dynamics
\begin{equation}
\label{eq:N-SDE}
	dX^{i, \underline \alpha}_t = b(t,X^{i, \underline \alpha}_t, \alpha^i_t, L^{N}(\underline X_t^{\underline\alpha}, \underline \alpha_t)) dt + \sigma dW^i_t, \quad X^{i, \underline\alpha}_0\sim \mu^{(0)},
\end{equation}
for some function $b$, a matrix $\sigma$ and a distribution $\mu^{(0)} \in \cP_2(\mathbb{R}^\xdim)$, where the state depends on an average of the states and controls of all the \emph{players} through the empirical measure $L^{N}(\underline X^{\underline\alpha}_t, \underline \alpha_t)$. The initial states $X^{i, \underline\alpha}_0$ are assumed to be i.i.d. 
Let $m \in \NN$ and let $\mathbb{A}\subseteq \mathbb{R}^m$ be a closed convex set.
The set of admissible strategies is defined as\footnote{Unless otherwise stated, we denote by $|\cdot|$ the Euclidean norm and by $ab:=a\cdot b $ the inner product, regardless of the dimension of the Euclidean space.
}
	\begin{equation*}
		\mathcal{A}:= \left\{\alpha:[0,T]\times \Omega\to \mathbb{A}\,\,\, \mathbb{F}^N\text{--progressive such that }  E\Big[\int_0^T|\alpha_t|^2\,dt \Big]<+\infty \right\}.
	\end{equation*}
Given two functions $f$ and $g$, the cost that agent $i$ seeks to minimize, when the strategy profile is $\underline \alpha = (\alpha^1,\dots,\alpha^N)$, is $	J(\alpha^i; \underline \alpha^{-i})$ given in \eqref{eq:cost}.
Note that under our assumptions (specified below) the cost $J$ is well defined for all admissible strategy profiles.
As usual, one is interested in constructing a Nash equilibrium $\hat{\underline \alpha}:= (\hat\alpha^{i},\dots, \hat\alpha^{n})$, 
that is, admissible strategies $(\hat\alpha^{1},\dots,\hat\alpha^N)$ such that for every $i=1,\dots,N$ and $\alpha \in \mathcal{A}$ it holds that
\begin{equation*}
 	J^i(\hat{\underline\alpha}) \le J(\alpha^i;\hat{\underline\alpha}^{-i}).
\end{equation*}

When such a Nash equilibrium exists for every $N$, our aim is to investigate its asymptotic properties as $N \to \infty$.
In particular, we give (regularity) conditions on the coefficients of the diffusions and the cost under which the Nash equilibrium of the $N$-player game converges to the mean-field equilibrium which we define below. 
We denote by $\mathcal{W}_2(\xi, \xi')$ the second order Wasserstein distance between two probability measures $\xi,\xi'$ and by $\partial_\xi h, \partial_\mu h$ and $\partial_\nu h$ the so-called $L$-derivatives of a function $h$ in the variable of the probability measure $\xi \in \mathcal{P}_2(\mathbb{R}^{\xdim}\times \mathbb{R}^m)$, $\mu \in \mathcal{P}_2(\mathbb{R}^{\xdim})$ and $\nu \in \mathcal{P}_2(\mathbb{R}^{m})$, respectively. 
See e.g.~\cite{MR2401600,PLLcollege} or~\cite[Chapter 5]{MR3752669} for definition and further details.

We will use the following assumptions, on which we comment after stating our main results, see Remark~\ref{rem:assumptions}.
\begin{enumerate}[label = (\textsc{A1}), leftmargin = 30pt]
	\item The function $b:[0,T]\times \mathbb{R}^\xdim\times \mathbb{R}^m\times \mathcal{P}_2(\mathbb{R}^\xdim\times \mathbb{R}^m) \to \mathbb{R}^\xdim$ is continuously differentiable in its last three arguments and satisfies the Lipschitz continuity and linear growth conditions
	\begin{equation*}
	\begin{cases}
		|b(t,x,a,\xi) - b(t,x^\prime, a^\prime, \xi^\prime)| \le L_f\big( |x-x'| +|a - a^\prime| + \mathcal{W}_2(\xi,\xi') \big) 
		\\
		|b(t,x,a,\xi)| \le C\left(1+|x| +|a| + \Big(\int_{\mathbb{R}^{\xdim+m}}|v|^2\,\xi(dv) \Big)^{1/2} \right)
    \end{cases}
	\end{equation*}
	for some $C,L_f>0$ and all $x, x' \in \mathbb{R}^\xdim$, $a,a' \in \mathbb{R}^m$, $t \in [0,T]$ and $\xi,\xi' \in \cP_2(\mathbb{R}^\xdim\times \mathbb{R}^m)$.

	The functions $f:[0,T]\times \mathbb{R}^\xdim\times \mathbb{R}^m\times \mathcal{P}(\mathbb{R}^\xdim\times \mathbb{R}^m) \to \mathbb{R}$ and $g:\mathbb{R}^\xdim\times \mathcal{P}(\mathbb{R}^\xdim) \to \mathbb{R}$ are continuously differentiable ($f$ in its last three arguments) and of quadratic growth:
	\begin{equation*}
	\begin{cases}
		|f(t,x,a,\xi)| \le C\Big(1+|x|^2 + |a|^2 + \int_{\mathbb{R}^{\xdim+m}}|v|^2\,\xi(dv)\Big)
		\\
		|g(x,\mu)| \le C\Big(1 +|x|^2 + \int_{\mathbb{R}^{\xdim}}|v|^2\,\mu(dv) \Big)
	\end{cases}
	\end{equation*}
	for some $C>0$ and all $x \in \mathbb{R}^\xdim$, $a \in \mathbb{R}^m$, $t \in [0,T]$ and $\xi \in \cP_2(\mathbb{R}^\xdim\times \mathbb{R}^m)$.
\label{a1}
\end{enumerate}
\begin{enumerate}[label = (\textsc{A2}), leftmargin = 30pt]
	\item The functions $b$ and $f$ can be decomposed as  
	\begin{equation}
	\label{eq:decom bf}
	\begin{cases}
		b(t,x,a,\xi) := b_1(t,x, a, \mu) + b_2(t, x, \xi) 
		\\
		f(t,x,a,\xi) = f_1(t,x,a,\mu) + f_2(t,x,\xi)
	\end{cases}
	\end{equation}
	for some functions $b_1,b_2$, $f_1$ and $f_2$, where $\mu$ is the first marginal of $\xi$. 
	\label{a2}
\end{enumerate}
\begin{enumerate}[label = (\textsc{A3}), leftmargin = 30pt]
	\item 
	Considering the function 
	\begin{equation}
	\label{eq:def-H-hyp}
		H(t,x,y,a,\xi)= f(t,x,a,\xi) + b(t,x,a,\xi)y,
	\end{equation}
	 there is $\gamma>0$ such that
	\begin{equation}
	\label{eq:strong convex}
		H(t,x,y,a,\xi) - H(t,x,y,a',\xi) - (a - a')\partial_aH(t,x,y,a,\xi) \ge\gamma |a-a'|^2
	\end{equation}
	for all $a,a' \in \mathbb{A}$,  $x \in \mathbb{R}^\xdim$, $a \in \mathbb{R}^m$, $t \in [0,T]$ and $\xi \in \cP_2(\mathbb{R}^\xdim\times \mathbb{R}^m)$;
	and the functions $x\mapsto g(x, \mu)$ and $(x,a)\mapsto H(t, x, y,a, \xi)$ are convex, where $\mu$ is the first marginal of $\xi$.
	In addition, the functions 
	\begin{equation*}
		\text{$\partial_a H(t,\cdot, \cdot,\cdot,\cdot)$, $\partial_xH(t,\cdot, \cdot,\cdot,\cdot)$ and $\partial_xg(\cdot,\cdot)$ are $L_f$-Lipschitz--continuous}
	 \end{equation*}
	  and of linear growth:
	\begin{equation*}
		\begin{cases}
			|\partial_xH(t, x,a,y,\xi)| \le C\Big(1 + |x|+ |y| + \big(\int_{\mathbb{R}^{\xdim}}|v|^2\mu(dv)\big)^{1/2} \Big)\\
			|\partial_aH(t, x,a,y,\xi)| \le C\Big(1 + |x|+ |a| + |y| + \big(\int_{\mathbb{R}^{\xdim+\ydim}}|v|^2\xi(dv)\big)^{1/2} \Big)\\
			|\partial_xg(x,\mu) \le C\Big(1 +|x| + (\int_{\mathbb{R}^{\xdim}}|v|^2\xi(dv))^{1/2} \Big)
		\end{cases}
	\end{equation*} 
	for some $C>0$ and all $x \in \mathbb{R}^\xdim$, $a \in \mathbb{R}^m$, $t \in [0,T]$ and $\xi\in \cP_2(\mathbb{R}^\xdim\times \mathbb{R}^m)$	where $\mu$ is the first marginal of $\xi$.
	\label{a3}
\end{enumerate}
\begin{enumerate}[label = (\textsc{A4}), leftmargin = 30pt]
	\item For every $(t,x,a,\xi)\in [0,T]\times \mathbb{R}^\xdim\times\mathbb{A}\times \mathcal{P}_2(\mathbb{R}^{\xdim\times m})$ and $(u,v) \in \mathbb{R}^\xdim\times\mathbb{R}^m$ we have 
	\begin{equation*}
	\begin{cases}
		| \partial_\mu b(t,x,a,\mu)(u)| \le C \\
		 | \partial_\xi f(t,x,a,\xi)(u,v) |\le C\Big(1 + |u| + |x| + \Big(\int_{\mathbb{R}^{\xdim}}|v|^2\mu(dv) \Big)^{1/2} \Big)\\
		| \partial_\mu g(x,\mu)(u)| \le C\Big(1 + |u|  + |x| +\Big(\int_{\mathbb{R}^\xdim}|v|^2\mu(dv) \Big)^{1/2} \Big)
		\end{cases}
	\end{equation*}
	for some $C>0$ and all $x \in \mathbb{R}^\xdim$, $a \in \mathbb{R}^m$, $t \in [0,T]$ and $\xi\in \cP_2(\mathbb{R}^\xdim\times \mathbb{R}^m)$	where $\mu$ is the first marginal of $\xi$.
\label{a4}
\end{enumerate}	
\begin{enumerate}[label = (\textsc{A5}), leftmargin = 30pt]
	\item The matrix $\sigma$ is uniformly elliptic. 
	That is, there is a constant $c>0$ such that 	$\langle \sigma\sigma'x,x\rangle \ge c|x|^2$  for every $x \in \mathbb{R}^{\xdim}$.
	\label{a5}
\end{enumerate}

\subsection{The Mean field game}
The mean field game that corresponds to the above $N$-player game is described as follows:
Given a flow of distributions $(\xi_t)_{t \in [0,T]}$ with $\xi_t \in \cP_2(\mathbb{R}^\xdim\times \mathbb{R}^m)$ with first marginal $\mu_t\in \cP_2(\mathbb{R}^\xdim)$, the cost of an infinitesimal agent is
$$
	J^\xi(\alpha) = E\bigg[\int_0^T f(t,X^\alpha_t, \alpha_t,\xi_t) dt + g(X^\alpha_T,\mu_T) \bigg]
$$
with the dynamics
$$
	dX^\alpha_t = b(t,X^\alpha_t, \alpha_t, \xi_t) dt + \sigma dW_t^i, \qquad X^\alpha_0 \sim \mu^{(0)}.
$$
The admissibility set on which the cost function $J^\xi$ is minimized is
\begin{equation*}
		\mathfrak{A}:= \bigg\{\alpha:[0,T]\times \Omega\to \mathbb{A}\,\,\, \mathbb{F}^i\text{--progressive such that }  E\Big[\int_0^T|\alpha_t|^2\,dt \Big]<+\infty \bigg\}.
	\end{equation*} 
The goal for the agent is to find $\hat\alpha^\xi \in \mathfrak{A}$ minimizing $J^\xi$ and satisfying the fixed point (or consistency) condition 
\begin{equation*}
	\xi_t = \cL(X^{\hat\alpha^\xi}_t, \hat\alpha^\xi_t)\quad \text{for all } t.
\end{equation*}
The first main result of this paper is the following link between the $N$-player game and the (asymptotic) MFG in small time horizon:
\begin{theorem}
\label{thm:main limit} 
	Let conditions \ref{a1}-\ref{a5} be satisfied and assume that there is $k>2$ such that $\mu^{(0)}$ admits moments of order $k$.
	Assume that the $N$-player game admits a Nash equilibrium $\hat{\underline\alpha}^{N} \in \mathcal{A}^N$. 
	Then, there is $\delta>0$ such that if $T\le \delta$, for each $i$ the sequence $(\hat\alpha^{i,N})_N$ converges to a mean field equilibrium $\hat\alpha^i \in \mathfrak{A}$ in the sense that it holds
	\begin{equation*}
		E\Big[|\hat{\alpha}^{i,N}_t - \hat\alpha_t^i|^2\Big] \le C (r_{N,m+\xdim, k} + r_{N,\xdim,k} )
	\end{equation*}
	for all $t \in[0,T]$ and $N\in \mathbb{N}$ and some constant $C>0$	where, for any $M,N,k $ we put $r_{N,M,k}:= r_{N, M,k,2}$ and
	\begin{equation}
	\label{eq:def_r-nmqp}
		r_{N,M,k,p}:=\begin{cases}
			N^{-1/2} + N^{-(k-p)/k}, 	&\text{if } p>M/2 \text{ and } k\neq 2p\\
			N^{-1/2}\log(1+N) + N^{-(k-p)/k}, 	&\text{if }  p=M/2 \text{ and } k \neq 2p\\
			N^{-2/M} + N^{-(k-p)/k}, 	&\text{if } M>2p \text{ and } k\neq M/(M-p).
		\end{cases}
	\end{equation}
\end{theorem}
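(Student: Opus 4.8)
The plan is to follow the three-step program announced in the introduction, specializing it to the non-cooperative game. First, I would invoke the Pontryagin maximum principle for $N$-player games with control interactions (established in Section~\ref{sec:pontryagin}) to characterize the Nash equilibrium $\hat{\underline\alpha}^N$: under the convexity in \ref{a3} and the regularity in \ref{a1}--\ref{a4}, each equilibrium control is given by a minimizer $\hat\alpha^{i,N}_t = \hat a(t, X^{i,N}_t, Y^{i,N}_t, L^N(\underline X_t,\underline\alpha_t))$ of the Hamiltonian, where $(X^{i,N}, Y^{i,N}, Z^{i,N})_{i=1}^N$ solves a coupled forward-backward system driven by the $N$ independent Brownian motions, with the backward equations carrying adjoint terms involving both $\partial_x H$ and the $L$-derivatives $\partial_\mu b$, $\partial_\xi f$, $\partial_\mu g$ evaluated at the empirical measures. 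The strong convexity \eqref{eq:strong convex} plus uniform ellipticity \ref{a5} are what make $\hat a$ Lipschitz in its arguments, so the FBSDE has coefficients of the right (Lipschitz, linear growth) type.

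Second, I would view this FBSDE system as a system of weakly interacting forward-backward particles and apply the propagation-of-chaos results of Section~\ref{sec:fbsde chaos}. The heuristic limit is the McKean-Vlasov FBSDE obtained by replacing $L^N(\underline X_t,\underline\alpha_t)$ with $\cL(X_t,\hat\alpha_t)$; the decomposition \ref{a2} of $b$ and $f$ into a part depending on $(\mu, a)$ and a part depending only on $\xi$ is what keeps the interaction structure within the scope of the backward-propagation-of-chaos machinery. The quantitative output of that machinery is precisely a bound of the form $E[|X^{i,N}_t - X^i_t|^2 + |Y^{i,N}_t - Y^i_t|^2] \le C(r_{N,m+\xdim,k} + r_{N,\xdim,k})$, where the two rate terms come from the Wasserstein convergence of the empirical measure of the $(\xdim+m)$-dimensional pair $(X^i,\hat\alpha^i)$ (governing $b_1, f_1$ and the coupled interaction) and of the $\xdim$-dimensional terminal state (governing $g$), via the Fournier--Guillin estimates specialized through the moment assumption of order $k>2$ on $\mu^{(0)}$, which propagates to the solution. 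Once the state/adjoint processes are shown to be close, Lipschitz continuity of $\hat a$ transfers the estimate to the controls: $E[|\hat\alpha^{i,N}_t - \hat\alpha_t|^2] \le C\,E[|X^{i,N}_t - X^i_t|^2 + |Y^{i,N}_t - Y^i_t|^2 + \mathcal{W}_2^2(L^N(\underline X_t,\underline\alpha_t), \cL(X_t,\hat\alpha_t))]$, and the last term is again controlled by the same rates.

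Third, I would identify the limiting McKean-Vlasov FBSDE with the mean field equilibrium. Here one freezes the flow $\hat\xi_t = \cL(X_t,\hat\alpha_t)$ produced by the limit system, applies the sufficient part of the Pontryagin principle for the frozen standard control problem (using convexity of $x\mapsto g(x,\mu)$ and of $(x,a)\mapsto H$), and checks that the consistency condition $\hat\xi_t = \cL(X^{\hat\alpha^{\hat\xi}}_t, \hat\alpha^{\hat\xi}_t)$ holds by construction. Existence of a solution to the McKean-Vlasov FBSDE — hence of the mean field equilibrium $\hat\alpha \in \cA$ — should come out of the same propagation-of-chaos argument (the limit of the tight, convergent $N$-particle solutions), while the assumed uniqueness of the mean field equilibrium guarantees that the object we converge to is unambiguous, so that the rate bound holds for \emph{the} equilibrium. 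The final $L^2(\Omega\times[0,T])$ convergence is then immediate by integrating the pointwise-in-$t$ bound and noting $r_{N,M,k}\to 0$.

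The main obstacle I expect is step~(ii): obtaining the quantitative forward-backward propagation of chaos for a \emph{fully coupled} system in which the mean-field interaction enters the backward (adjoint) equation through $L$-derivatives, not just the forward dynamics. The usual Sznitman coupling argument must be run simultaneously on the forward component (standard Grönwall in the $X$-variable) and the backward component (where one must control $E[\sup_t |Y^{i,N}_t - Y^i_t|^2]$ together with the $Z$-term via BSDE stability estimates), and the coupling between them can only be closed because $\gamma>0$ in \eqref{eq:strong convex} and $\sigma$ is elliptic — these provide the monotonicity/small-coupling structure that prevents the Grönwall constants from blowing up. Getting the sharp dependence on $N$ (the precise form of $r_{N,M,k,p}$) requires carefully tracking where the Wasserstein distance between the empirical measure and its mean-field limit is invoked and applying the optimal convergence rates in the appropriate dimension ($\xdim+m$ versus $\xdim$); the bookkeeping that separates these two contributions is the delicate part. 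Everything else — the maximum principle in step~(i) and the verification in step~(iii) — is, as the authors note, comparatively routine.
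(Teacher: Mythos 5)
Your skeleton is indeed the paper's announced three-step program, but at the point where the proof actually lives — running the forward--backward propagation of chaos on the equilibrium system — there are genuine gaps. First, the representation $\hat\alpha^{i,N}_t=\hat a(t,X^{i,N}_t,Y^{i,N}_t,L^N(\underline X_t,\underline\alpha_t))$ you invoke is only an implicit fixed-point relation: the Hamiltonian minimizer depends on the empirical measure of the controls themselves. The paper untangles this using exactly \ref{a2}: since $b_2,f_2$ do not depend on the player's own control, the first-order condition \eqref{eq:first order} is solved by the map $\Lambda$ of Lemma \ref{lem:Lambda lipschitz}, yielding $\hat\alpha^i_t=\Lambda\big(t,X^{i}_t,Y^{i,i}_t,L^N(\underline X_t),\zeta^N_t\big)$ as in \eqref{eq:def alpha star}, where all the control-distribution terms are swept into the perturbation $\zeta^N_t$, which must then be shown to vanish. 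Second, in the $N$-player Pontryagin principle each player carries $N$ adjoint processes $(Y^{i,j})_{j=1,\dots,N}$, not one; the representation uses only the diagonal $Y^{i,i}$, and the argument hinges on Lemmas \ref{lem:integrability} and \ref{prop:cv-Yij-0} (uniform moment bounds and $E[\sup_t|Y^{i,j}_t|^2]\le CN^{-1}$ for $i\neq j$), which are what make $\zeta^N_t$, $\epsilon^N_t$ and $\gamma^N$ negligible; your sketch never mentions the off-diagonal adjoints. Third, even granting this, the equilibrium FBSDE is \emph{not} of the abstract form of Section \ref{sec:fbsde chaos} because of these $O(1/N)$ terms, so it cannot be fed directly into the propagation-of-chaos theorems: the paper introduces the auxiliary system \eqref{eq:auxiliary fbsde} (whose coefficients $B,F,G$ are built from $\Lambda(\cdot,\cdot,\cdot,\cdot,0)$ and shown Lipschitz via a Kantorovich-duality argument for the pushforward $\varphi$), applies Theorem \ref{thm:process conv syst} to it, and then closes the gap between the equilibrium and auxiliary systems by a separate stability estimate controlled by $E[K^N]$. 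Skipping this intermediate comparison, as your plan does, is where the argument would break.

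There is also a circularity in your treatment of existence: the coupling method of Section \ref{sec:fbsde chaos} takes existence (and uniqueness, at least in law) of the McKean--Vlasov FBSDE as an \emph{input} — the i.i.d. copies are constructed from its solution — so existence of the mean field equilibrium cannot come ``out of the same propagation-of-chaos argument'' as a limit of the $N$-particle solutions; a tightness/weak-limit route is a different method altogether. The paper instead obtains existence of \eqref{eq:MkV.in.proof} from \cite{Car-Del13}, deduces uniqueness from the assumed uniqueness of the mean field equilibrium via Proposition \ref{prop:Pontryagin-MFG}, and only then identifies $\hat\alpha_t=\Lambda(t,X_t,Y_t,\cL(X_t))$ as the equilibrium through the sufficient maximum principle. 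Finally, a smaller inaccuracy: the Gronwall loop coupling the forward and backward components is not closed by strong convexity and ellipticity (which play no role there), but by working on a small time horizon and iterating over a partition of $[0,T]$; the convexity $\gamma>0$ enters only through the Lipschitz continuity of $\Lambda$ and the sufficiency part of Proposition \ref{prop:Pontryagin-MFG}.
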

In the case of linear quadratic games the convergence rate can be simplified to the \emph{optimal rate} $O(1/N)$.
The proof of this statement can be found in the ArXiv version of the paper, see \cite[Theorem 11]{pontryagin21}.
The small time assumption of Theorem \ref{thm:main limit} can be replaced by a monotonicity property on the drift.
\begin{enumerate}[label = (\textsc{M}), leftmargin = 30pt]
		\item The Hamiltonian admits a minimizer 
		\begin{equation*}
		 	\Lambda(t,x, y, \mu) \in \argmin_{a \in \mathbb{A}}H(t, x,a,y, \xi)
		\end{equation*}
		where $\mu$ is the first marginal of $\xi$.
		The drift $b$ satisfies the monotonicity condition
		\begin{equation}
		\label{eq:mon.con.b}
				(x-x')\cdot\Big(b(t,x,\Lambda(t,x,y,\mu),\xi)-b(t,x',\Lambda(t,x',y,\mu),\xi) \Big)\le -K_b|x-x'|^2
		\end{equation}
		for all $x,x' \in \mathbb{R}^\ell$, $(t, \xi)\in [0,T]\times \mathcal{P}(\mathbb{R}^\ell\times \mathbb{R}^m)\to \mathbb{R}^\ell$ and some constant $K_b>0$.
		Moreover, 
		it holds
		\begin{equation}
		\label{eq:mon.con.b.H.g}
			\begin{cases}
				(y-y')\cdot\Big( b(t,x,\Lambda(t, x, y, \mu),\xi)-b(t,x,\Lambda(t, x, y',\mu),\xi) \Big) \le -K|y-y'|^2\\
				(x - x')\cdot\Big( \partial_xH(t,x',\Lambda(t, x', y, \mu),\xi)-\partial_xH(t,x,\Lambda(t, x, y,\mu),\xi) \Big) \le -K|x-x'|^2\\
				(x -x')\cdot\Big(\partial_xg(x, \mu) - \partial_xg(x',\mu)\Big)\ge K|x-x'|^2
			\end{cases}
		\end{equation}
		for all $t \in [0,T]$, $x,x', y, y' \in \RR^\xdim$ and $\xi \in \cP_2(\RR^{\xdim\times m})$, and for some constant $K>0$.
		\label{M}
	\end{enumerate}
In the statement and proof of the next result it will be judicious to distinguish the Lipschitz constant of $b$ in each of its arguments, so that the Lipschitz--continuity condition in \ref{a1} now reads
	\begin{enumerate}[label = (\textsc{A1}'), leftmargin = 30pt]
	\item The function $b$ satisfies
	\begin{equation*}
		\begin{cases}
		|b(t,x,a,\xi) - b(t,x',a',\xi')|\le L_{b,x}|x-x'| + L_{b,a}|a-a'| + L_{b,\xi}\cW_2(\xi,\xi')\\
		|b(t,x,a,\xi)|\le C(1 + |x| + |a| + \int_{\mathbb{R}^\ell}|x|^2\xi(d x))\end{cases}
	\end{equation*}
		for some constants $C>0$, $L_{b,x}, L_{b,a}, L_{b,\xi}>0$ and all $x,x' \in \mathbb{R}^\xdim$, $a,a' \in \mathbb{R}^m$, $t \in [0,T]$ and $\xi,\xi'\in \cP_2(\mathbb{R}^\xdim\times \mathbb{R}^m)$	where $\mu$ is the first marginal of $\xi$.
\label{a1prime}
	\end{enumerate}
	Recall that $L_f$ is the Lipschitz--constant of $\partial_xH$ and $\partial_xg$ as stated in \ref{a3}.
	With these notation, put
	\begin{equation*}
		\begin{cases}
			K_1 := 2\Big(\frac{3}{2\gamma}L_{b,a}L_f + L_{b,\xi} \Big)\Big( 8Te^{7L_fT} L_f + \frac12 \Big) + L_{b,\xi} + \frac{L_{b,a}L_f}{2\gamma} \\
			K_2 := 16TL_f^2(L_f + T) \Big( L_{b,\xi} +  \frac{3L_fL_{b,a}}{2\gamma} \Big)^2e^{2L_f(6+L_f)} + L_{b,\xi}+ \frac{L_{b,a}L_f}{2\gamma}\\
			K_{3,k}:=4(k-1)\frac{L_{b,\xi} + L_fL_{b,a}}{2^k\gamma(L_F + L_G)\exp(kTL_F(2+\frac{2L_F}{k(k-1)}))}.
		\end{cases}
	\end{equation*}
	\begin{theorem}
	\label{thm:main.limit.arbitrary.time} 
		Let conditions \ref{a1prime},\ref{a2}-\ref{a5} and \ref{M} be satisfied and assume that there is $k>2$ such that $\mu^{(0)}$ admits moments of order $k$.
		Suppose that the $N$-player game admits a Nash equilibrium $\hat{\underline\alpha}^{N} \in \mathcal{A}^N$.
		If the constant $K_b$ satisfies
		\begin{equation*}
		 K_b> \max(K_1,K_2,K_{3,k}),
	\end{equation*}
	then for each $i$ the sequence $(\hat\alpha^{i,N})_{N}$ converges to a mean field equilibrium $\hat\alpha^i \in \mathfrak{A}$ in the sense that it holds
		\begin{equation*}
			E\Big[|\hat{\alpha}^{i,N}_t - \hat\alpha_t^i|^2 \Big] \le C (r_{N,m+\xdim, k} + r_{N,\xdim,k} )
		\end{equation*}
		for all $t \in[0,T]$ and $N\in \mathbb{N}$ and some constant $C>0$.
	\end{theorem}
	\begin{remark}
		In  Assumption \ref{M}, the fact that $\Lambda$ depends only upon the first marginal of $\xi$ is due to \ref{a2}.
		This will be proved below.
		Moreover, The reader will observe in the proof that the essential condition needed to derive the convergence is \eqref{eq:mon.con.b}.
		The conditions \eqref{eq:mon.con.b.H.g} are needed to guarantee existence (for $T$ arbitrary) of the McKean--Vlasov FBSDE \eqref{eq:MkV.in.proof} characterizing the game.
		The conditions \eqref{eq:mon.con.b.H.g} can be dropped when this equation admits a unique solution.
	\end{remark}
We now complement Theorems \ref{thm:main limit} and \ref{thm:main.limit.arbitrary.time}  with concentration estimates for the $N$-Nash equilibrium.
\begin{theorem}
\label{thm:concen Nash}
	Under the conditions of Theorem \ref{thm:main limit}, it holds that\footnote{We recall that $\underline{\hat\alpha}^N:=(\hat\alpha^1,\dots,\hat\alpha^N)$, where $\hat\alpha^i$ is a mean field equilibrium of the mean field game with Brownian motion $W^i$.}
	\begin{equation*}
		E\big[\cW_2(L^N(\underline{\hat\alpha}_t^N), \cL(\hat\alpha_t))\big] \le C\big( r_{N,2\xdim,k} + r_{N,\xdim,k}\big)
	\end{equation*}
 	for all $(t,N) \in [0,T]\times\mathbb{N}$.

 	If in addition $\mu^{(0)}$ is a Dirac mass
	then there is a constant $c(L_f)$ depending only on the Lipschitz constants of $b,f,g$ and $\partial_xH$ such that if $T\le c(L_f)$, then for every $N\ge 1$ and $\varepsilon>0$ it holds that
	\begin{equation}
	\label{eq:deviation for alpha}
		P(h(\underline{\hat\alpha}^N_t) - E[h(\underline{\hat\alpha}^N_t)] \ge \varepsilon) \le \frac{C}{N\varepsilon^2}+e^{-K\varepsilon^2}
	\end{equation}
	 for two given constants $K,C$ which do not depend on $N$ and for every $1$-Lipschitz function $h:\mathbb{R}^{mN}\to \mathbb{R}$.
	In particular, for $N$ large enough it holds that 
	\begin{equation}
	\label{eq:concen Wasses}
	P\Big(\mathcal{W}_2\big(L^N(\hat{\underline\alpha}^{N}_t), \cL(\hat\alpha_t)\big)\ge \varepsilon\Big) \le \frac{C}{\varepsilon^2N^2} + e^{-KN\varepsilon^2}. 
	\end{equation}

	If the functions $b,f$ and $g$ satisfy \ref{M} and are such that
	\begin{equation}
	\label{eq:bound.on.H.for.T.large}
	 \begin{cases}
	 |\partial_aH(t, x, a,\xi)|+ |\partial_xH(t, x, a, \xi)| \le C\big(1 + |a| + (\int_{\mathbb{R}^m}|v|^2\,\nu(dv)^{1/2} \big)\\
	  |\partial_xg(x,\mu)|\le C
	 \end{cases}
	 \end{equation}
	 where $\nu$ is the second marginal of $\xi$, then for every $T>0$ and for $N$ large enough it holds
	 \begin{equation}
		P\Big(\int_0^Th(\underline{\hat\alpha}^N_t) - E[h(\underline{\hat\alpha}^N_t)]\,dt \ge \varepsilon\Big) \le \frac{C}{N\varepsilon^2}+e^{-K\varepsilon^2}
	\end{equation}
	 for two given constants $K,C$ which do not depend on $N$ and for every $1$-Lipschitz function $h:\mathbb{R}^{mN}\to \mathbb{R}$.
\end{theorem}
Before going any further, let us make a few remarks concerning our assumptions.

\begin{remark}
\label{rem:assumptions}
	Let us now briefly comment on the assumptions made in Theorems \ref{thm:main limit} and \ref{thm:concen Nash}.
	In a nutshell, both theorems tell us that under sufficient regularity and integrability of the coefficients of the game, we have convergence with explicit convergence rates.
	Condition \ref{a1}, \ref{a3} and \ref{a4} speak to these regularity and integrability conditions.
	These conditions, along with the convexity property \eqref{eq:strong convex} are typically assumed in the literature, even to guarantee solvability see e.g. \cite{Hamadene98,Ben-Fre84}.

	The conditions in \ref{a2} are structural conditions on the coefficients.
	These conditions are probably not essential from a mathematical standpoint.
	They are due to our method, which consists in finding an explicit representation of the equilibrium in terms of processes whose convergence can be derived, see  
	\eqref{eq:def alpha star}. 
	Thus, the conditions in \ref{a2} can be replaced by any other conditions ensuring such representations of the equilibria.
	Importantly \eqref{eq:decom bf} is not needed when we do not have mean-field interaction through the controls, but only through the states.
	The Lipschitz assumptions on $\partial_xH$ and $\partial_aH$ in \ref{a3} are not necessary when $\partial_xg$ is bounded and $\partial_xf$ and $\partial_xb$ are bounded in $x$.
	In fact, in this case, BSDE estimates show that the function $\partial_xH$ can be restricted to bounded $y$'s, so that these Lipschitz continuity conditions are automatically satisfied if $\partial_xb$ is Lipschitz. 
\end{remark}

\section{Pontryagin's maximum principle} 
\label{sec:pontryagin}
As explained in the introduction, two elements of our three-step approach to derive the limit consist in applying Pontryagin's maximum principles for $N$-agent games and for mean field games.
This section is dedicated to the presentation of these results.
In the case of $N$-agent games we give the ``necessary part'' of the maximum principle.
Since the case of mean field games is less involved, we present both the ``necessary'' and the ``sufficient'' parts.

\subsection{Pontryagin's maximum principle for $N$-agent games}
The goal of this section is to discuss Pontryagin's maximum principle of the $N$-agent game and derive characterization properties for the Nash equilibria.
Hereafter, for each $p\ge1$ and $k\in \mathbb{N}$ we denote
\begin{align*}
		{\mathcal S}^{p}(\mathbb{R}^{k}) := \left\{ Y \in \mathcal{H}^{0}(\mathbb{R}^{k}) \,\Big|\, E\Big[ \sup_{0 \le t \le T} |Y_t|^p\Big] < +\infty \right\}
		\\
		{\mathcal H}^{p}(\mathbb{R}^{k}) := \left\{ Z \in \mathcal{H}^{0}(\mathbb{R}^{k}) \,\Big|\, E \Big[\Big(\int_0^T |Z_t|^2 dt\Big)^{p/2}\Big] < +\infty \right\},
\end{align*}
		with ${\mathcal H}^{0}(\mathbb{R}^{k})$ being the space of all $\RR^k$-valued progressively measurable processes.
\begin{proposition}
\label{thm:N pontryagin}
	Let the conditions \ref{a1}, \ref{a4} and \ref{a5} be satisfied.
	If $\underline{\hat\alpha}$ is a Nash equilibrium of the $N$-player game, then for any admissible control $\underline \beta = (\beta^1,\dots, \beta^N)$ it holds
	\begin{equation}
	\label{eq:optim cond Hi}
		\partial_{\alpha^i}H^{N,i}(t,\underline X^{\underline{\hat \alpha}}_t, \underline{\hat\alpha}_t, \underline Y^{i,\cdot}_t)(\beta^i_t - \hat\alpha^i_t)\ge0 \quad P\otimes dt\text{-a.s., for all } i,
	\end{equation}
	where $H^{N,i}$ is the $i$-player's Hamiltonian given by 
	\begin{equation*}
		H^{N, i}(t,\underline x, \underline \alpha, \underline y) := f\left(t, x^i, \alpha^i, L^N(\underline x, \underline\alpha) \right) + \sum_{j =1}^Nb\left(t,x^j, \alpha^j,L^N(\underline x, \underline {\alpha})  \right) y^{i,j}
	\end{equation*}
	and putting $g^{N,i}(\underline x):= g(x^i,L^N(\underline x))$ and $\underline Y^{i,\cdot} = (Y^{i,1},\dots, Y^{i,N})$, $(Y^{i,j}, Z^{i,j,k})_{i,j,k}$ solves the adjoint equation
	\begin{equation}
	\label{eq:adjoint-fctN}
	d Y^{i,j}_t = - \partial_{x^j} H^{N,i}(t,\underline X^{\underline{\hat \alpha}}_t, \underline {\hat\alpha}_t, \underline Y^{i,\cdot}_t) dt + \sum_{k=1}^N Z^{i,j,k}_t dW^{k}_t,\quad Y^{i,j}_T = \partial_{x^j}g^{N,i}(\underline X^{\underline{\hat \alpha}}_T).
	\end{equation}
\end{proposition}
Note that $(Y^{i,j}, Z^{i,j,k})_{i,j,k}$ implicitly depend upon $\underline{\hat \alpha}$ but we omit to write this dependence to alleviate the notation. 
\begin{proof}
	If $\underline{\hat\alpha}$ is a Nash equilibrium, then player $i$ solves the stochastic control problem $\sup_{\alpha \in \mathcal{A}}J(\alpha, \underline{\hat\alpha}^{-i})$.
	That is, it holds
	\begin{equation*}
		J(\hat\alpha^i, \underline{\hat\alpha}^{-i}) =  \sup_{\alpha \in \mathcal{A}}J(\alpha, \underline{\hat\alpha}^{-i}).
	\end{equation*}
	Therefore, the result follows by application of the (standard) stochastic maximum principle, see e.g. \cite[Theorem 2.15]{MR3753660}.
\end{proof}

For later reference and for convenience of the reader, we spell-out the adjoint equations~\eqref{eq:adjoint-fctN} in terms of the functions $f,b,g$ appearing in the game. 
From \cite[Proposition 5.35]{MR3753660}, we have
$$
	\partial_{x^j} g^{N,i}(\underline x, \underline \alpha) 
	= \delta_{i,j} \partial_{x} g\left(x^i, L^N(\underline x) \right)
	+ \frac{1}{N} \partial_\mu g\left(x^i, L^N(\underline x) \right)(x^j),
$$
where $\delta_{i,j}=1$ if and only if $i=j$ and $0$ otherwise. 
Similar relations hold for $f$ and $b$, and for the partial derivatives with respect to the control variables. We deduce that
\begin{equation}
\label{eq:adjoint eq full terminal}
	Y^{i,j}_T 
	= \delta_{i,j} \partial_{x} g\left(X^{i,\underline{\hat\alpha}}_T,L^N(\underline X^{\underline{\hat\alpha}}_T) \right) 
	+  \frac{1}{N} \partial_\mu g\left(X^{i,\underline{\hat\alpha}}_T, L^N(\underline X^{\underline{\hat\alpha}}_T)\right) (X^{j,\underline{\hat\alpha}}_T),
\end{equation}
and
\begin{align}
\notag
	d Y^{i,j}_t 
	&= - \partial_{x^j} H^{N,i}(t, \underline X^{\underline{\hat \alpha}}_t, \underline {\hat\alpha}_t, \underline Y^{i,\cdot}_t) dt + \sum_{k=1}^N Z^{i,j,k}_t dW^{k}_t
	\\\notag
	&= - \Big( \delta_{i,j} \partial_{x} f\Big(t,X^{i,\underline{\hat\alpha}}_t, \hat\alpha^i_t, L^N(\underline X_t^{\underline{\hat \alpha}}, {\underline{\hat \alpha}}_t) \Big)
	+ \frac{1}{N} \partial_\mu f\Big(t, X^{i,\underline{\hat\alpha}}_t, \hat\alpha^i_t, L^N(\underline X_t^{\underline{\hat \alpha}}, {\underline{\hat \alpha}}_t) \Big)(X^{j,\underline{\hat\alpha}}_t,\hat\alpha^j_t) \Big)dt
	\\\notag
	&
	\qquad - \Big( \partial_{x} b\Big(t, X^{j,\underline{\hat\alpha}}_t, \hat\alpha^j_t, L^N(\underline X_t^{\underline{\hat \alpha}}, {\underline{\hat \alpha}}_t)\Big) Y^{i,j}_t\,dt	\\\label{eq:adjoint eq full}
	&\qquad + E_{(\tilde X, \tilde \alpha, \tilde Y) \sim \overline{\zeta}^{N,i}_t} \Big[ \partial_\mu b\Big(t,\tilde X_t, \tilde \alpha_t, L^N(\underline X_t^{\underline{\hat \alpha}}, {\underline{\hat \alpha}}_t)  \Big)(X^{j,\underline{\hat\alpha}}_t,\hat\alpha^j_t) \tilde Y_t \Big] \,dt+ \sum_{k=1}^N Z^{i,j,k}_t dW^{k}_t
\end{align}
where we used the notation $\overline{\zeta}^{N,i}_t := \frac{1}{N} \sum_{j=1}^N \delta_{(X^{j,\underline{\hat\alpha}}_t, \hat\alpha^j_t, Y^{i,j}_t)}$ for the empirical distribution of the triple $(X^{j,\underline{\hat\alpha}}_t,\hat \alpha^j_t, Y^{i,j}_t)_j$.

\subsection{Pontryagin's maximum principle for mean field games of controls}
\label{sec:mfg pontryagin}

Let us recall that the Hamiltonian $H$ is defined by~\eqref{eq:def-H-hyp}, i.e.
$$
	H(t,x, \alpha,y, \xi) = f(t,x, \alpha, \xi) + b(t,x, \alpha, \xi)  y.
$$
Recall the following optimality conditions for mean field games:
\begin{proposition}
\label{prop:Pontryagin-MFG}
If $\hat\alpha$ is a mean field equilibrium such that the mapping $t\mapsto \xi^{\hat\alpha}_t := \cL(X^{\hat\alpha}_t,\hat\alpha_t)$ is bounded and Borel measurable, then it holds that
	\begin{equation}
	\label{eq:opt-H-MFG}
		H(t,X^{\hat\alpha}_t, \hat\alpha_t, Y^{\hat\alpha}_t, \xi_t^{\hat\alpha}) = \inf_{a \in \mathbb{A}}	H(t,X^{\hat\alpha}_t, a, Y^{\hat\alpha}_t, \xi_t^{\hat\alpha}) \quad P\otimes dt\text{-a.s.}
	\end{equation}
	with $(X^{\hat\alpha}_t, Y^{\hat\alpha}_t, Z^{\hat\alpha}_t, \hat\alpha_t)$ solving the FBSDE system
	\begin{equation}
	\label{eq:adjoint process mfg}
	\begin{cases}
		dX^{\hat\alpha}_t = b(t,X^{\hat\alpha}_t, \hat\alpha_t, \xi^{\hat\alpha}_t) dt + \sigma dW_t, 
		&X^{\hat\alpha}_0 \sim \mu^{(0)},
		\\
		dY^{\hat\alpha}_t = -\partial_x H(t,X^{\hat\alpha}_t, \hat\alpha_t, Y^{\hat\alpha}_t, \xi^{\hat\alpha}_t) dt + Z^{\hat\alpha}_t d W_t, 
		&Y^{\hat\alpha}_T = \partial_x g(X^{\hat\alpha}_T, \cL(X^{\hat\alpha}_T)).
	\end{cases}
	\end{equation}

	Reciprocally, let $\hat \alpha$ be an admissible control with associated controlled process $X^{\hat\alpha}$ and adjoint processes $(Y^{\hat\alpha}, Z^{\hat\alpha})$ as given by \eqref{eq:adjoint process mfg}.
	Assume $t\mapsto \xi^{\hat\alpha}_t = \cL(X^{\hat\alpha}_t,\hat\alpha_t)$ is Borel--measurable and bounded (i.e. the second moment is bounded uniformly in $t$). Assume that for each $\xi \in \cP(\mathbb{R}^\xdim\times \mathbb{R}^m)$ with first marginal $\mu$ the functions $x\mapsto g(x, \mu)$ and $(x,a)\mapsto H(t, x,a, y , \xi)$ are $ dt$-a.s. convex and that $\hat\alpha$ satisfies \eqref{eq:opt-H-MFG}. 
	Then $\hat\alpha$ is a mean field equilibrium.
\end{proposition}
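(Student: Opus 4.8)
The plan is to reduce the mean field problem to a classical stochastic control problem by \emph{freezing} the equilibrium flow, and then invoke the consistency condition, following \cite[Theorem 2.2]{Car-Del13-2}. Throughout, write $\hat\xi_t := \cL(X^{\hat\alpha}_t,\hat\alpha_t)$ and $\hat\mu_t := \cL(X^{\hat\alpha}_t)$, and consider the \emph{frozen} cost functional $J^{\hat\xi}$ together with the frozen controlled dynamics $dX^\alpha_t = b(t,X^\alpha_t,\alpha_t,\hat\xi_t)\,dt + \sigma\,dW_t$ over $\alpha \in \cA$. The key structural point, specific to mean field games (as opposed to McKean--Vlasov control), is that an infinitesimal agent does not influence the population: when we perturb $\alpha$, the measure argument in both $f$ and $b$ stays equal to $\hat\xi_t$. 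Hence $J^{\hat\xi}$ is a genuine (non-McKean--Vlasov) control functional, and the relevant Hamiltonian is exactly $H(t,x,a,y,\hat\xi_t)$ with its measure slots held fixed; in particular $\partial_x H$ does not carry an $L$-derivative term.

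For the necessary part, since $\hat\alpha$ is a mean field equilibrium it minimizes $J^{\hat\xi}$ over $\cA$, so I would apply the necessary part of the classical Pontryagin principle to the frozen problem. Concretely this is the $N=1$ specialization of the computation in the proof of Theorem~\ref{thm:N pontryagin}: perturb $\hat\alpha$ to $\hat\alpha + \epsilon\eta$ (admissible by convexity of $\cA$), introduce the variational process $\Gamma$ and the adjoint pair $(Y^{\hat\alpha},Z^{\hat\alpha})$ solving the linear backward equation in \eqref{eq:adjoint process mfg}, differentiate the cost at $\epsilon = 0$, apply It\^o's formula to $\Gamma\cdot Y^{\hat\alpha}$ along a localizing sequence of stopping times, and pass to the limit. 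Well-posedness of the adjoint equation follows as in Theorem~\ref{thm:N pontryagin} from the Lipschitz and growth bounds in \ref{a1} and \ref{a3} together with the $L^p$ bounds on $X^{\hat\alpha}$. This gives $E[\partial_a H(t,X^{\hat\alpha}_t,\hat\alpha_t,Y^{\hat\alpha}_t,\hat\xi_t)(\beta_t-\hat\alpha_t)]\ge 0$ for all admissible $\beta$, hence by the localization/Lebesgue differentiation argument at the end of the proof of Theorem~\ref{thm:N pontryagin} the pointwise inequality $\partial_a H(t,X^{\hat\alpha}_t,\hat\alpha_t,Y^{\hat\alpha}_t,\hat\xi_t)(a-\hat\alpha_t)\ge 0$ for every $a\in\mathbb{A}$, $P\otimes dt$-a.s. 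Since $a\mapsto H$ is convex by \ref{a3}, this variational inequality over the convex set $\mathbb{A}$ is equivalent to $\hat\alpha_t$ being a minimizer, which is \eqref{eq:opt-H-MFG}; the terminal condition $Y^{\hat\alpha}_T = \partial_x g(X^{\hat\alpha}_T,\hat\mu_T)$ uses the consistency relation $\hat\mu_T=\cL(X^{\hat\alpha}_T)$.

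For the reciprocal part, I would run the standard verification argument on the frozen problem. Given any admissible $\alpha$ with state $X^\alpha$, expand $J^{\hat\xi}(\alpha) - J^{\hat\xi}(\hat\alpha)$, add and subtract the Hamiltonian, and apply It\^o's formula to $t\mapsto Y^{\hat\alpha}_t(X^\alpha_t-X^{\hat\alpha}_t)$ (again localizing to remove the stochastic integrals, then using $L^2$-integrability to take limits). Convexity of $x\mapsto g(x,\hat\mu_T)$ bounds the terminal contribution by $\partial_x g(X^{\hat\alpha}_T,\hat\mu_T)(X^\alpha_T-X^{\hat\alpha}_T)$, while convexity of $(x,a)\mapsto H(t,x,a,Y^{\hat\alpha}_t,\hat\xi_t)$ together with \eqref{eq:opt-H-MFG} (which forces $\partial_a H(t,X^{\hat\alpha}_t,\hat\alpha_t,Y^{\hat\alpha}_t,\hat\xi_t)(\alpha_t-\hat\alpha_t)\ge 0$) makes the running contribution nonnegative; hence $J^{\hat\xi}(\alpha)\ge J^{\hat\xi}(\hat\alpha)$. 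Thus $\hat\alpha$ minimizes $J^{\hat\xi}$, and combined with the assumed consistency $\hat\xi_t=\cL(X^{\hat\alpha}_t,\hat\alpha_t)$ this is precisely the definition of a mean field equilibrium.

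The computations are routine and, as the introduction indicates, this is the easiest of the three steps. The only points needing care are (i) the bookkeeping that makes "freezing" rigorous — i.e. that the law of the perturbed state never re-enters $f$ or $b$ in the MFG formulation, so that the frozen Hamiltonian genuinely governs the problem — and (ii) the localization and integrability in the two It\^o-formula steps, which are handled exactly as in the proof of Theorem~\ref{thm:N pontryagin}. I do not anticipate a genuine obstacle beyond stating the frozen control problem precisely enough that the appeal to the classical maximum principle is legitimate.
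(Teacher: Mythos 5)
Your proposal is correct and follows essentially the same route as the paper: freeze the flow $\hat\xi_t=\cL(X^{\hat\alpha}_t,\hat\alpha_t)$, apply the classical Pontryagin principle (necessary and sufficient parts) to the resulting standard control problem, and invoke the consistency condition. The only difference is that you spell out the perturbation/adjoint and verification arguments, whereas the paper simply cites the standard maximum principle (Theorems 2.15 and 2.16 of \cite{MR3752669}) for these steps.
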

This result is standard, it follows for instance by application of  \cite[Theorems 2.15  and 2.16]{MR3752669} to the (standard) control problem parameterized by a given flow of measures, then use the consistency condition.

\section{Quantitative propagation of chaos for coupled FBSDE systems}
\label{sec:fbsde chaos}
This section studies abstract propagation of chaos type results for forward-backward systems of SDEs.
These results will be central for the proofs of the main theorems, but seem to be of independent interest.
Therefore, we present the section so that it can be read independently.

The main idea is that we consider a system of ``particles'' evolving forward and backward in time and with interactions through their empirical distributions.
We show that under mild regularity conditions on the coefficients of the equations describing the dynamics of the equations, the whole system converges to a system of McKean-Vlasov FBSDEs.
Moreover, we derive explicit convergence rates and concentration inequality results.
Propagation of chaos-type results for  backward SDEs (not coupled to forward systems) have been previously derived in \cite{Buck-Dje-Li-Peng09,Hu-Ren-Yang,backward-chaos}.

Let $d,\xdim,q\in \mathbb{N}$, we fix three functions 
\begin{align*}
  	&B:[0,T]\times\mathbb{R}^\xdim\times\mathbb{R}^q\times \cP_2(\mathbb{R}^\xdim\times \mathbb{R}^q)\to \mathbb{R}^\xdim,\\
  	&
  	F:[0,T]\times\mathbb{R}^\xdim\times \mathbb{R}^q\times \mathbb{R}^{q\times d}\times \cP_2(\mathbb{R}^\xdim\times\mathbb{R}^q)\to \mathbb{R}^q, \quad G:\mathbb{R}^\xdim \times \cP_2(\mathbb{R}^\xdim) \to \mathbb{R}^q
\end{align*}
and an $\ell \times d$ matrix $\sigma$ for some $\ell, d, q \in \mathbb{N}$.
Consider the coupled systems of FBSDEs
\begin{equation}
\label{eq: n fbsde}
	\begin{cases}
		\displaystyle 
		X^{i,N}_t = x^i_0 + \int_0^tB_u(X^{i,N}_u, Y^{i,N}_u, L^N(\X_u,\Y_u))\,du + \sigma\,W^i_t\\
		\displaystyle 
		Y^{i,N}_t = G(X^{i,N}_T, L^N(\underline X_T)) + \int_t^T F_u(X^{i,N}_u, Y^{i,N}_u, Z^{i,i,N}_u, L^N(\underline X_u, \underline Y_u)) \,du\\\quad\qquad - \sum_{k=1}^N\int_t^TZ^{i,k,N}_u\,dW^k_u,
	\end{cases}
\end{equation}
with $i=1,\dots,N$, and for given i.i.d., $\mathcal{F}_0$-measurable random variables $x^1_0, \dots, x^N_0$  with values in $\mathbb{R}^\xdim$, and where as above, we used the notation $\underline Y := (Y^1,\dots, Y^N)$ and $\underline X:= (X^1,\dots, X^N)$.
We recall that $W^1, \dots, W^N$ are independent $d$-dimensional Brownian motions.  
We will use the following conditions:
\begin{enumerate}[label = (\textsc{B1}), leftmargin = 30pt]
	\item The functions $B$, $F$ and $G$ are Lipschitz continuous, that is there are positive constants $L_B,L_F,L_G>0$ such that 
	\begin{align}
	\label{B1}
	\begin{cases}
		|F_t(x,y,z,\xi)-F_t(x',y',z',\xi')|\le L_F\left(|x-x'|+|y-y'| + |z-z'|+ \mathcal{W}_2(\xi, \xi')  \right)\\
		|B_t(x,y,\xi)-B_t(x',y',\xi')|\le L_B\left(|x-x'|+|y-y'| +  \mathcal{W}_2(\xi, \xi') \right)\\
		|G(x,\mu)-G(x',\mu')|\le L_G\left(|x-x'| + \mathcal{W}_2(\mu, \mu') \right)
		\end{cases}
	\end{align} 
	for every $t \in [0,T]$, $x, x' \in \mathbb{R}^\xdim$, $y,y' \in \mathbb{R}^q$, $z,z' \in \mathbb{R}^{q\times d}$ $\xi,\xi' \in \mathcal{P}_2(\mathbb{R}^\xdim\times \mathbb{R}^q)$ and $\mu, \mu' \in \mathcal{P}_2(\mathbb{R}^\xdim)$ .
	\label{b1}
\end{enumerate}
\begin{enumerate}[label = (\textsc{B2}), leftmargin = 30pt]
\item The functions $B ,F$ and $G$ satisfy the linear growth conditions
\begin{align*}
\label{B2}
	\begin{cases}
		|B_t(x,y,\xi)|\le L_B\left(1 + |x| + |y|+ \big(\int|v|^2\,\xi(dv) \big)^{1/2}\right)\\
 		|F_t(x,y,z,\xi)|\le L_F\left(1+ |x|+|y| + |z| + \big(\int|v|^2\,d\xi(v) \big)^{1/2}\right)\\
		|G(x,\mu)|\le L_G\left(1+ |x|+ \big(\int|v|^2\,d\mu(v)\big)^{1/2}  \right) .
	\end{cases}
 \end{align*} 
 \label{b2}
\end{enumerate}
\begin{enumerate}[label = (\textsc{B2}'), leftmargin = 30pt]
\item The functions $B ,F$ and $G$ satisfy the linear growth conditions 
\begin{align*}
	\begin{cases}
		|B_t(x,y,\xi)|\le L_B\left(1 + |y| + \big(\int|v|^2\,d\nu(v) \big)^{1/2}\right)\\
 		|F_t(x,y,z,\xi)|\le L_F\left(1+|y| + \big(\int|v|^2\,d\nu(v) \big)^{1/2}\right)\\
		|G(x,\mu)|\le L_G
	\end{cases}
 \end{align*} 
 where $\nu$ is the second marginal of $\xi$.
 \label{b2prime}
\end{enumerate}

\begin{remark}
\label{rem:system exists}
	Under the conditions \ref{b1}-\ref{b2} and \ref{a5}, it can be checked (see e.g. \cite[Remark 2.1]{backward-chaos}) that the functions
	\begin{equation*}
	\begin{cases}
		(\bx,\by)\mapsto (B_t(x^1, y^1, L^N(\bx,\by)), \dots, B_t(x^N, y^N, L^N(\bx,\by)))\\
		(\bx,\by,\bz)\mapsto (F_t(x^1, y^1,z^1 L^N(\bx,\by)), \dots, F_t(x^N, y^N, z^N,L^N(\bx,\by)))\\
		\x\mapsto (G(x^1, L^N(\x)), \dots, G(x^N,L^N(\bx)))
	\end{cases}
	\end{equation*}
	are Lipschitz continuous and of linear growth (with Lipschitz constant independent of $N$). 
	Thus, the unique solvability of the system \eqref{eq: n fbsde} when the time horizon $T$ is small enough is guaranteed e.g. by \cite[Theorem 4.2]{MR3752669}.
	Existence of a unique solution on arbitrary large time intervals typically requires additional conditions, for instance, if one additionally assumes \ref{b2prime}, see \cite[Theorem 4.1]{Ma-Pro-Yong} (when the coefficients are also smooth) or under monotonicity-type conditions on the drift and the generator for instance as assumed in \ref{b3} below, see \cite[Theorem 2.6]{Delarue} or \cite{Peng-Wu99}. 
\end{remark}
The first main result of this section is the following:
\begin{theorem}
\label{thm:mom bound syst}
	Assume that the conditions \ref{b1}-\ref{b2}, \ref{a5} are satisfied and that there is $k>2$ such that $E[|x^1_0|^k]<\infty$.
	Denote by $(\underline X, \underline Y, \underline Z ) \in \mathcal{S}^{2}(\mathbb{R}^{\xdim N})\times \mathcal{S}^{2}(\mathbb{R}^{qN})\times \mathcal{H}^{2}((\mathbb{R}^{q\times d})^{N\times N})$ the solution of the FBSDE \eqref{eq: n fbsde}.
	There is $\delta>0$ such that if $T \le \delta$ and
	the McKean-Vlasov FBSDE
	\begin{equation}
	\label{eq:MkVFBSDE}
	\begin{cases}
		\displaystyle 
		X_t = x^1_0 + \int_0^tB_u(X_u, Y_u, \cL({X_u,Y_u}))\,du + \sigma\,W_t\\
		\displaystyle 
		Y_t = G(X_T,\cL(X_T)) +\int_t^T F_u(X_u, Y_u, Z_u,\cL(X_u,Y_u))\,du - \int_t^TZ_u\,dW_u
	\end{cases}
	\end{equation}
	admits a unique solution $(X, Y, Z) \in \mathcal{S}^{2}(\mathbb{R}^\xdim)\times \mathcal{S}^{2}(\mathbb{R}^q)\times \mathcal{H}^{2}(\mathbb{R}^{q\times d})$, then it holds 
	\begin{equation}
	\label{eq:mom bound y system}
		\sup_{t \in [0,T]}E\Big[\mathcal{W}_2^2\big(L^N(\X_t, \Y_t),\cL(X_t,Y_t)\big) \Big] \le C\left( r_{N,q+\xdim,k} + r_{N,\xdim,k} \right) 
	\end{equation}
	for all $(t,N)\in[0,T]\times\mathbb{N}$, where $r_{N,q+\xdim,k}:=r_{N,q+\xdim,k,2}$ is given by \eqref{eq:def_r-nmqp}, and for some constants $C$ depending on $L_B,L_F, L_G$, $k$, $\sigma$, $E[|x^1_0|^k]$ and $T$.
	In addition for all $N\in\mathbb{N}$ we also have
	\begin{align}\notag
		E\bigg[\sup_{s\in [0,T]}|X^{1,N}_s - X^1_s |^2 \bigg]&+ E\left[|Y^{1,N}_t - Y^1_t |^2 \right]+ E\bigg[ \int_0^T|Z^{1,1,N}_s - Z_s^1|^2\,ds \bigg] \\
		&
		\le C\Big(r_{N,q+\xdim, k} + r_{N,\xdim,k} \Big) .
			\label{eq:process conv}
	\end{align}
\end{theorem}

\subsection{Proof of Theorem \ref{thm:mom bound syst}}
The arguments of the proof of Theorem \ref{thm:mom bound syst} are broken up into intermediate results that we present in this subsection.
Given a progressive $d$-dimensional process $\gamma$, we use the shorthand notation $\mathcal{E}_{s,t}(\gamma\cdot W)$ for the stochastic exponential of $\gamma$. That is, we put
\begin{equation*}
	\mathcal{E}_{s,t}(\gamma\cdot W) := \exp\Big(\int_s^t\gamma_u\,dW_u - \frac12\int_s^t|\gamma_u|^2\,du \Big).
\end{equation*}
In this whole subsection, we assume that~\eqref{eq:MkVFBSDE} admits a unique solution denoted by $(X,Y,Z)$. 
We start by proving useful moment bounds for solutions of McKean-Vlasov FBSDEs.
For simplicity in this subsection, we will put $L_f:= \max(L_B , L_F, L_G)$.
\begin{lemma}
\label{lem: moment d+5 system}
	Assume that the condition \ref{b2} is satisfied and that \eqref{eq:MkVFBSDE} admits a unique solution $(X,Y,Z) \in \mathcal{S}^2(\mathbb{R}^\xdim)\times \mathcal{S}^2(\mathbb{R}^\ydim)\times\mathcal{H}(\mathbb{R}^{\ydim\times d})$.
	Further assume that there is $k\ge2$ such that $E[|x^1_0|^k]<\infty$.
	If either $T$ is small enough or \ref{b3} is satisfied for $K_B$ therein such that 
\begin{equation}
\label{eq:choiceK_b.k}
	K_B\ge4(k-1)\frac{L_{B,y,\xi}}{2^k(L_F + L_G)\exp(kTL_F(2+\frac{2L_F}{k(k-1)}))}
\end{equation}
 where $L_{B,y,\xi}$ is the Lipschitz constant of $B$ in $(y,\xi)$, then it holds that
	\begin{equation}
	\label{eq:k-moment.X.Y}
		E\Big[ \sup_{t \in [0,T]} |X_t|^k \Big]+\sup_{t \in [0,T]} E\left[ |Y_t|^k \right]<\infty.
 	\end{equation}
\end{lemma}
\begin{proof}
	When $T$ is small enough, the proof follows standard FBSDE estimations. It is therefore omitted.

	Let us assume the the monotonicity condition \ref{b3} is satisfied.
	Applying It\^o's formula to $|X|^k$, using \ref{b3} and \ref{b2} yields 
	\begin{align*}
		|X_t|^k &\le |x^1_0|^k + k\int_0^t-K_B|X_u|^k + L_{B,y,\xi}|X_u|^{k-1}(1+|Y_u| + E[|X_u|^2]^{1/2} + E[|Y_u|^2]^{1/2})\,du\\
		&\quad + k\int_0^tX_u^{k-1}\sigma\,dW_u\\
		&\le |x^1_0|^k + k\int_0^t\Big(4(k-1)\frac{L_{B,y,\xi}}{\varepsilon}-K_B\Big)|X_u|^k + \varepsilon L_{B,y,\xi}\Big\{1+|Y_u|^{k} + E[|X_u|^2]^{k/2} + E[|Y_u|^2]^{k/2}\Big\}\,du\\
		&\quad + k\int_0^tX_u^{k-1}\sigma\,dW_u
	\end{align*}
	where $L_{B,y,\xi}$ denotes the Lipschitz contant of $B$ in $y$ and $\xi$, and where we used the inequality $xy \le x^p/p\varepsilon + \varepsilon y^q/q$ with $p,q$ H\"older conjugates.
	Thus, taking expectation (up to localization) and applying Gronwall's inequality
	\begin{align}
	\label{eq.estim.x.lemma}
		E[|X_t|^k] \le \varepsilon kL_{B,y,\xi}e^{(\frac{4(k-1)L_{B,y,\xi}}{\varepsilon} - K_B)T}E\bigg[\int_0^T|Y_u|^k\,du\bigg] + C.
	\end{align}
	Similarly, applying It\^o's formula to $Y^k$ and then Young's inequality for some $\eta>0$ yields
	\begin{align}
	\notag
		|Y_t|^k &\le E\bigg[|G(X_T,\mathcal{L}(X_T))|^k + L_Fk\int_t^T|Y_u|^{k-1}(|X_u| + |Y_u| + |Z_u| + E[|X_u|^2]^{1/2}+ E[|Y_u|^2]^{1/2})\\\notag
				&- \frac{k(k-1)}{2}\int_t^TY^{k-2}_u|Z_u|^2\,du \mid \mathcal{F}_t\bigg]\\\notag
				&\le E\bigg[2^kL_G(|X_T|^k + E[|X_T|^2]^{k/2} + 1) + kL_F(1 + \frac{k-1}{k} + \frac{1}{\eta})\int_t^T|Y_u|^k\,du\\
		\label{eq:estim.yy.lemma}
				& + L_F\int_t^T|X_u|^k + E[|X_u|^2]^{k/2}+ E[|Y_u|^2]^{k/2} + k(\eta L_F -\frac{k(k-1)}{2})\int_t^T|Y_u|^{k-2}|Z_u|^2\,du\mid \mathcal{F}_t\bigg]
	\end{align}
	where the second inequality uses \eqref{eq.estim.x.lemma}.
	Choosing $\eta$ such that $\eta L_f -\frac{k(k-1)}{2}=0$, taking expectation of both sides and applying Gronwall's inequality yields
	\begin{align*}
		E[|Y_t|^k] &\le C_1E\bigg[ |X_T|^k + \int_0^T|X_u|^k\,du \bigg] + C_2\\
		&\le C_1\varepsilon e^{(\frac{4(k-1)L_{B,y,\xi}}{\varepsilon} -K_B)T}TE\bigg[\int_0^T|Y_u|^k\,du\bigg] + C_2
	\end{align*}
	with $C_1 := 2^k(L_F + L_G)\exp(kTL_F(2+\frac{2L_F}{k(k-1)}))$.
	First choosing $\varepsilon>0$ small enough that $\varepsilon<[2^k(L_F + L_G)\exp(kTL_F(2+\frac{2L_F}{k(k-1)}))]^{-1}$ and then $K_B\ge 4(k-1)L_{B,y,\xi}/\varepsilon$, and integrating on both sides yields
	$E[\int_0^T|Y_u|^k\,du]<\infty$.
	In view of \eqref{eq.estim.x.lemma} and \eqref{eq:estim.yy.lemma} this yields the result.
\end{proof}

The proof of Theorem \ref{thm:mom bound syst} is based on the coupling technique used in \cite{backward-chaos}.
To this end, 
we fix $N$ i.i.d. copies $(\tilde X^1,\tilde Y^1, \tilde Z^1), \dots, (\tilde X^N,\tilde Y^N, \tilde Z^N)$ of $(X,Y,Z)$ such that for each $i$, $(\tilde X^i, \tilde Y^i, \tilde Z^i)$ solves Equation \eqref{eq:MkVFBSDE} with driving Brownian motion $W^i$ and initial condition $x^i_0$.
This can be done when the McKean-Vlasov FBSDE~\eqref{eq:MkVFBSDE} has a unique solution, and thus the associated law $\cL({X_u},{Y_u})$ is unique at each time $u \in [0,T]$.
By \cite[Theorem 4.24]{MR3752669}, the FBSDE \eqref{eq:MkVFBSDE} is uniquely solvable for $T$ small.
The following lemma is a central element of our argument.
Recall the notation $\tilde \X: = (\tilde X^1, \dots,\tilde X^N)$ and $\tilde \Y:=(\tilde Y^1, \dots, \tilde Y^N)$.
\begin{lemma}
\label{lem: fundamental lemma}
 	If \ref{b1}-\ref{b2} are satisfied,
	then there are positive constants $C$ and $c(L_f)$ depending only on $L_f$ such that if $T\le c(L_f)$, then for every $0\le t\le T$ it holds that
	\begin{align}
	\notag
		&E\Big[\mathcal{W}^2_2(\cL(X_t, Y_t), L^N(\X_t, \Y_t)) \Big]\\
		\label{eq:fundamental inequality for y}
		 &\qquad\qquad\leq C E\Big[\mathcal{W}^2_2(\cL(X_t, Y_t), L^N(\tilde \X_t, \tilde \Y_t)) + \mathcal{W}^2_2(\cL({X_T}),L^N(\tilde \X_T))\Big].
	\end{align}
\end{lemma}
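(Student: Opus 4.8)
The plan is to run a coupling argument between the interacting system $(\X,\Y,\underline Z)$ of \eqref{eq: n fbsde} and the $N$ independent copies $(\tilde X^i,\tilde Y^i,\tilde Z^i)$ of the McKean--Vlasov solution from \eqref{eq:fbsde for iid copies}, controlling the pathwise differences $\Delta X^i := X^{i,N}-\tilde X^i$, $\Delta Y^i := Y^{i,N}-\tilde Y^i$, $\Delta Z^{i,k}:=Z^{i,k,N}-\delta_{ik}\tilde Z^i$. First I would reduce the left-hand side to these differences: by the triangle inequality for $\mathcal{W}_2$ together with the synchronous coupling (match the $i$-th atom of $L^N(\X_t,\Y_t)$ with the $i$-th atom of $L^N(\tilde\X_t,\tilde\Y_t)$),
\[\mathcal{W}_2^2\big(\cL(X_t,Y_t),L^N(\X_t,\Y_t)\big)\le 2\,\mathcal{W}_2^2\big(\cL(X_t,Y_t),L^N(\tilde\X_t,\tilde\Y_t)\big)+\tfrac{2}{N}\sum_{i=1}^N\big(|\Delta X^i_t|^2+|\Delta Y^i_t|^2\big),\]
so that, writing $\Theta(u):=\tfrac1N\sum_i E\big[|\Delta X^i_u|^2+|\Delta Y^i_u|^2\big]$, it remains to bound $\Theta(t)$. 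The same device gives, for every $u$, $\mathcal{W}_2\big(L^N(\X_u,\Y_u),\cL(X_u,Y_u)\big)\le\big(\tfrac1N\sum_j(|\Delta X^j_u|^2+|\Delta Y^j_u|^2)\big)^{1/2}+\mathcal{W}_2\big(L^N(\tilde\X_u,\tilde\Y_u),\cL(X_u,Y_u)\big)$, which I use each time the empirical law of the interacting system enters a coefficient.

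For the forward estimate I would subtract the two forward equations, apply the Lipschitz bound on $B$ from \ref{b1}, Cauchy--Schwarz, Jensen and Grönwall on $[s,t]$, to get
\[\tfrac1N\sum_i E|\Delta X^i_t|^2\le C\,\tfrac1N\sum_i E|\Delta X^i_s|^2+C\int_s^t\big(\Theta(u)+\rho(u)\big)\,du,\]
where $\rho(u):=E\big[\mathcal{W}_2^2(\cL(X_u,Y_u),L^N(\tilde\X_u,\tilde\Y_u))\big]$ and $C=C(L_f,T)$. For the backward estimate I would subtract the two backward equations and apply It\^o's formula to $|\Delta Y^i_t|^2$ on $[t,T]$ (first along a localising sequence of stopping times, legitimate since $\Y,\tilde Y^i\in\mathcal{S}^2$ and $\underline Z,\tilde Z^i\in\mathcal{H}^2$, then passing to the limit). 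The diagonal term $\int_t^T|\Delta Z^{i,i}_u|^2\,du$ on the left absorbs, via Young's inequality, the $z$-Lipschitz contribution of $F$, while the off-diagonal terms $\sum_{k\ne i}\int_t^T|Z^{i,k,N}_u|^2\,du$ carry a favourable sign and are simply discarded. Using the Lipschitz bounds on $F$ and $G$ from \ref{b1}, and bounding the terminal difference by $|G(X^{i,N}_T,L^N(\underline X_T))-G(\tilde X^i_T,\cL(X_T))|\le L_f\big(|\Delta X^i_T|+(\tfrac1N\sum_j|\Delta X^j_T|^2)^{1/2}+\mathcal{W}_2(L^N(\tilde\X_T),\cL(X_T))\big)$, one obtains
\[\tfrac1N\sum_i E|\Delta Y^i_t|^2\le C\,\Theta(T)+C\,\rho_X(T)+C\int_t^T\big(\Theta(u)+\rho(u)\big)\,du,\]
with $\rho_X(T):=E\big[\mathcal{W}_2^2(\cL(X_T),L^N(\tilde\X_T))\big]$.

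To close the loop I would add the forward and backward inequalities, use the forward bound to control $\Theta(T)$, and take the supremum over $t\in[s,T]$; this produces $\sup_{[s,T]}\Theta\le C\big(\tfrac1N\sum_i E|\Delta X^i_s|^2+\rho_X(T)\big)+C\,T\sup_{[s,T]}\Theta+C\int_s^T\rho(u)\,du$, and choosing $T\le c(L_f)$ small enough that $CT\le\tfrac12$ reabsorbs the $\sup_{[s,T]}\Theta$ term. Substituting back into the first display gives \eqref{eq:fundamental inequality for y}, the term $\mathcal{W}_2^2(\cL(X_t,Y_t),L^N(\tilde\X_t,\tilde\Y_t))$ coming from the initial triangle inequality and $\mathcal{W}_2^2(\cL(X_T),L^N(\tilde\X_T))$ from the terminal condition of the backward part; in the applications $\rho(u)$ is bounded uniformly in $u$ by the moment bounds of Lemma \ref{lem: moment d+5 system} combined with the standard convergence rate of empirical measures, so the integral contribution is of the same order as the Wasserstein errors on the right of \eqref{eq:fundamental inequality for y}.

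The main obstacle — and the reason the estimates for the forward and backward components must be run simultaneously rather than in isolation — is the genuine forward-backward coupling: the forward drift $B$ depends on $Y$ and on the joint empirical law of $(\X,\Y)$, while the backward data $F$ and $G$ depend on $X$, so the bound for $\Delta X$ feeds into that for $\Delta Y$ and conversely; the only way to decouple them is to add the two inequalities and exploit the smallness of $T$ (a contraction/Grönwall argument on $[s,T]$) to reabsorb the cross terms. A secondary technical point is the bookkeeping of the $N\times N$ family $(Z^{i,k,N})_{k}$ against the single diffusion coefficient $\tilde Z^i$: only the $(i,i)$-entry must be tracked, all other entries entering the It\^o expansion with a non-negative sign and being dropped.
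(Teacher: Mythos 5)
Your proposal is correct and follows essentially the same route as the paper's proof: couple the interacting system with i.i.d.\ copies of the McKean--Vlasov solution, estimate the backward differences via It\^o's formula (the diagonal $Z$-entry absorbing the $z$-Lipschitz part of $F$ through Young's inequality, the off-diagonal entries discarded by sign), estimate the forward differences via Gr\"onwall, add the two, and close by smallness of $T$ together with the triangle inequality against $L^N(\tilde \X_t,\tilde \Y_t)$. The one bookkeeping difference is that you substitute the triangle inequality inside the time integrals from the start, so your final estimate carries an extra term $C\int_s^T E\big[\mathcal{W}_2^2(\cL(X_u,Y_u),L^N(\tilde\X_u,\tilde\Y_u))\big]\,du$ rather than matching \eqref{eq:fundamental inequality for y} verbatim, whereas the paper keeps the self-referential quantity $E\big[\mathcal{W}_2^2(L^N(\X_u,\Y_u),\cL(X_u,Y_u))\big]$ and reabsorbs its integral into the left-hand side using $c(L_f)T<1$ (which, written out carefully, likewise replaces the fixed-time term by its supremum over $u\in[s,T]$); since in the application (Theorem \ref{thm:mom bound syst}) this quantity is bounded uniformly in time by Lemma \ref{lem: moment d+5 system} and the empirical-measure rate, the two forms are of equal strength, so I regard this as a harmless variant rather than a gap.
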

\begin{proof}
	Applying It\^{o}'s formula to the process $e^{\beta t}|\tilde Y^{i}_t-Y^{i,N}_t|^2$ for some $\beta \ge 0$ to be determined later, we have
	\begin{align*}
		&e^{\beta t}|\tilde Y^{i}_t-  Y^{i,N}_t|^2\\
		 & = e^{\beta T}|G(\tilde X^{i}_T,\cL({X_T})) - G(X^{i,N}_T,L^N(\X_T))|^2-2\sum_{k=1}^N\int_t^Te^{\beta u}(\tilde Y^{i}_u-Y^{i,N}_u)(\delta_{k,i}\tilde Z^{i}_u-Z^{i,k,N}_u)dW^k_u\\
		&+2\int_t^Te^{\beta u}(\tilde Y^{i}_u-Y^{i,N}_u)\left[F_u(\tilde X^{i}_u,\tilde Y^{i}_u,\tilde Z^{i}_u,\cL({X_u},{Y_u}))-F_u(X^{i,N}_u,Y^{i,N}_u,Z^{i,i,N}_u, L^N(\X_u,\Y_u)) \right]\,du\\
		&  -\sum_{j=1}^N\int_t^Te^{\beta u}|Z^{i,j,N}_u-\delta_{ij}Z^i_u|^2\,du -\int_t^T\beta e^{\beta u}|\tilde Y^{i}_u-Y^{i,N}_u|^ 2du.
	\end{align*}
	By Lipschitz continuity of $F$ and $G$, then applying Young's inequality with 
	a strictly positive constant $a$ to be set below we get 
	\begin{align}
	\nonumber &e^{\beta t}|\tilde Y^{i}_t-Y^{i,N}_t|^2
	 \leq 2e^{\beta T}L_f|\tilde X^{i}_T-X^{i,N}_T|^2 +2e^{\beta T}L_f\mathcal{W}^2_2(\cL({X_T}),L^N(\X_T)) \\
	\nonumber &\quad-2\sum_{k=1}^N\int_t^Te^{\beta u}(\tilde Y^{i}_u-Y^{i,N}_u)(\delta_{k,i}\tilde Z^{i}_u-Z^{i,k,N}_u)dW^k_u +\int_t^Te^{\beta u}L_f|\tilde X^{i}_u-X^{i,N}_u|^2du\\\notag
	&\quad+\int_t^T e^{\beta u}\left(L_fa+4L_f-\beta\right)|\tilde Y^{i}_u-Y^{i,N}_u|^2du - \sum_{j=1}^N\int_t^T|Z^{i,j,N}_u-\delta_{ij}\tilde Z^i_u|^2\,du\\\notag
	&\quad + L_f\int_t^T e^{\beta u} \mathcal{W}_2^2(L^N(\X_u,\Y_u),\cL(X_u,{Y_u}))\,du + \frac{L_f}{a}  \int_t^Te^{\beta u}|\tilde Z^{i}_u-Z^{i,i,N}_u|^2du.
	\end{align}
	Letting $a >L_f$ and $\beta=L_fa+ 4L_f$,
	and taking conditional expectation on both sides above, we have the estimate
	\begin{align}
	\nonumber
		|\tilde Y^{i}_t-Y^{i,N}_t|^2 
		&\le	2e^{\beta T}L_fE\Big[|\tilde X^{i}_T-X^{i,N}_T|^2 +\mathcal{W}^2_2(L^N(\X_T),\cL({X_T}))\\
	\label{eq:first estimfory}
		&\quad + \int_t^T \left( |\tilde X^{i}_u-X^{i,N}_u|^2 + \mathcal{W}_2^2(L^N(\X_u,\Y_u),\cL(X_u,{Y_u})) \right) \,du \mid \mathcal{F}_t^N\Big].
	\end{align}
	On the other hand, for every $0\le s\le t\le T$, by Lipschitz continuity of $B$, the forward equation yields the estimate 
	\begin{align}
	\label{eq:pathwise for x}
		|\tilde X^i_t - X^{i,N}_t| \le L_f\int^t_{0} \left( |\tilde X^i_u - X^{i,N}_u| + |\tilde Y^i_u - Y^{i,N}_u| + \mathcal{W}_2(L^N(\X_u,\Y_u),\cL(X_u,{Y_u})) \right)\,du.
	\end{align}
	Adding up the squared power of the above with \eqref{eq:first estimfory} yields 
	\begin{align*}
		&|\tilde X^{i}_t-X^{i,N}_t|^2+|\tilde Y^{i}_t-Y^{i,N}_t|^2 
	 \le  C_{L_f,T}E\Big[\mathcal{W}^2_2(L^N(\X_T),\cL({X_T})) \\
	 &+ \int_0^T \left( |\tilde X^{i}_u-X^{i,N}_u|^2 + |\tilde Y^{i}_u-Y^{i,N}_u|^2 + \mathcal{W}_2^2(L^N(\X_u,\Y_u),\cL(X_u,{Y_u})) \right)\,du 
	 \mid \mathcal{F}_t^N\Big].
	\end{align*}
	If $T<1\wedge \frac{1}{C_{L_f,T}}$, we then have 
	\begin{align*}
		&E\big[|\tilde X^{i}_t-X^{i,N}_t|^2+|\tilde Y^{i}_t-Y^{i,N}_t|^2 \big]\\
		&\qquad \le  C_{L_f,T,1}E\Big[\mathcal{W}^2_2(L^N(\X_T),\cL({X_T})) + \int_0^T \mathcal{W}_2^2(L^N(\X_u,\Y_u),\cL(X_u,{Y_u}))\,du
	 \Big]
	\end{align*}
	for a constant $C_{L_f,T,1}$ which depends only on $L_f,T$.
	Coming back to the forward equation, it follows by the definition of the $2$-Wasserstein distance, by \eqref{eq:pathwise for x} and by Gronwall's inequality that 
	\begin{align*}
		\mathcal{W}_2^2(L^N(\tilde \X_T), L^N(\X_T)) &\le \frac 1N \sum_{i=1}^N |\tilde X^i_T - X^{i,N}_T|^2\\
		& \le e^{2L_fT}\int_0^T\Big(\frac 1N \sum_{i=1}^N|\tilde Y_u^i - Y^{i,N}_u|^2 +  \mathcal{W}_2^2(L^N(\X_u,\Y_u),\cL(X_u,{Y_u})) \Big) \,du.
	\end{align*}
	Therefore, we can continue the estimation of $|\tilde X^{i}_t-X^{i,N}_t|^2+|\tilde Y^{i}_t-Y^{i,N}_t|^2$ by
	\begin{align}
	\notag
		&E\big[ |\tilde X^{i}_t-X^{i,N}_t|^2+|\tilde Y^{i}_t-Y^{i,N}_t|^2 \big]\\\notag
		&\le  C_{L_f,T,1}E\bigg[\mathcal{W}^2_2(L^N(\tilde \X_T),\cL({X_T})) + \mathcal{W}^2_2(L^N(\tilde \X_T),L^N(\X_T))
		\\\notag
		&\qquad \qquad + \int^T_{0} \mathcal{W}_2^2(L^N(\X_u,\Y_u),\cL(X_u,{Y_u}))\,du 
	 	\bigg] 
		\\\notag
		 &\leq C_{L_f,T,1}\vee 2e^{2L_fT}E\bigg[\mathcal{W}^2_2(\cL({X_T}), L^N(\tilde \X_T))
		  + \int^T_{0}\Big(\frac 1N\sum_{i=1}^N\big\{|\tilde Y_u^i - Y^{i,N}_u|^2\\
		  &\label{eq:useful Y estimate} 
		  \qquad\qquad + |\tilde X_u^i - X^{i,N}_u|^2\big\} +  \mathcal{W}_2^2(L^N(\X_u,\Y_u),\cL(X_u,{Y_u})) \Big)\,du \bigg].
	\end{align}
	Thus, further assuming $T\le \frac{1}{C_{L_f,T,1}\vee e^{L_fT}}$ yields
	\begin{align*}
	&E\left[\mathcal{W}^2_2(L^N(\X_t, \Y_t), L^N(\tilde \X_t, \tilde \Y_t)) \right]
		\le E\Big[\frac1N\sum_{i=1}^N	(|\tilde X^{i}_t-X^{i,N}_t|^2+|\tilde Y^{i}_t-Y^{i,N}_t|^2)\Big] \\
		  & \qquad\qquad \leq C_{L_f,T,2}E\Big[\mathcal{W}^2_2(\cL({X_T}), L^N(\tilde \X_T)) + \int^T_{0} \mathcal{W}_2^2(L^N(\X_u,\Y_u),\cL(X_u,{Y_u}))\,du\Big].
	\end{align*}
	By the triangle inequality we can therefore deduce that
	\begin{align*}
		&E\Big[\mathcal{W}^2_2( L^N(\X_t, \Y_t),\cL(X_t, Y_t)) \Big] \\
		 &\leq E\Big[\mathcal{W}^2_2(L^N(\tilde \X_t, \tilde \Y_t), \cL(X_t, Y_t))\Big] + 	E\Big[ \mathcal{W}^2_2(L^N(\tilde \X_t, \tilde \Y_t),L^N(\X_t, \Y_t))  \Big]
		\\
		& \le E\Big[ \mathcal{W}^2_2(L^N(\tilde \X_t, \tilde \Y_t), \cL(X_t, Y_t)) \Big]
		\\
		& \quad + C_{L_f,T,2}E\Big[ \mathcal{W}^2_2(\cL({X_T}), L^N(\tilde \X_T)) + \int^T_{0} \mathcal{W}_2^2(L^N(\X_u,\Y_u), \cL(X_u,{Y_u}))\,du \Big]
	\end{align*}
	from which we derive \eqref{eq:fundamental inequality for y}, assuming $T<1/C_{L_f,T,2}$.
\end{proof}

\begin{proof}(of Theorem \ref{thm:mom bound syst})
	The bound \eqref{eq:mom bound y system} follows by Lemmas \ref{lem: fundamental lemma} and \ref{lem: moment d+5 system}.
	In fact, from Lemma \ref{lem: fundamental lemma} if $T$ is small enough that $T<1/C_{L_f,T,2}$, then for every $t \in [0,T]$ it holds that 
	\begin{align*}
		&E\Big[\cW^2_2( L^N(\X_t, \Y_t),\cL(X_t, Y_t)) \Big]\\
		 &\qquad \le CE\Big[ \cW^2_2( L^N(\tilde \X_t, \tilde \Y_t), \cL(X_t, Y_t)) \Big] 
		+ CE\Big[\cW^2_2(L^N(\tilde \X_T), \cL(X_T)) \Big]
		\\
		&\qquad \le C(r_{N,m+\xdim,k}  + r_{N,\xdim,k} ) 
	\end{align*}
	where the second inequality follows by \cite[Theorem 1]{Fou-Gui15} which can be applied thanks to Lemma \ref{lem: moment d+5 system}.
	To prove \eqref{eq:process conv}, first observe that by assumption \ref{b1} and Gronwall's inequality we readily have
	\begin{equation}
	\label{eq: sde conv syst}
		|X^{1,N}_t - X^1_t| \le e^{L_fT}\int_0^t \left( |Y^{1,N}_u -Y^1_u | + \cW_2(L^N(\X_u, \Y_u), \cL(X_u, Y_u)) \right) \,du
	\end{equation}
	for all $0\le t\le T$.
	On the other hand, by It\^{o}'s formula applied to the process $| Y^{1,N}_t-Y^1_t|^2$ as in the proof of Lemma \ref{lem: fundamental lemma}, and then the inequality $2xy \le \varepsilon x^2 + y^2/\varepsilon$ with the constant $\varepsilon := 1/2$, we have
	\begin{align}
	\notag
		&| Y^{1,N}_t-Y_t^1|^2+\sum_{j}^N\int_t^T|Z^{1,j,N}_s-\delta_{1j}Z^1_s|^2\,ds\\\notag
		&\le L_f\left(|X^{1,N}_T-X^{1}_T|^2 + \cW_2^2(L^N(\X_T),\cL(X_T)) \right) -2\sum_{k=1}^N\int_t^T( Y^{1,N}_s-Y^1_s)(Z^{1,k,N}_s-\delta_{k,1} Z^1_s)dW^k_s
		\\
		\notag
		&\quad +\int_t^T \left( \frac{1}{2} | Z^{1,N}_s-Z^1_s|^2 + |X^{1,N}_s-X^{1}_s|^2 \right) ds
		+\int_t^T (3L_f^2 + L_f ) | Y^{1,N}_s-Y^{1}_s|^2ds
		\\
		&\quad+\int_t^TL_f\cW^2_2(L^N(\X_s,\Y_s),\cL(X_s,{Y_s}))\,ds.
		\label{eq:estim y&z}
	\end{align}
	Thus, it follows by Gronwall's inequality that
	\begin{align}
	\notag
		&|Y^{1,N}_t - Y^1_t|^2 +E\left[\int_t^T|Z^{1,1,N}_u - \tilde Z^1_u\,du \mid \cF^N_t \right]\\\notag
		 &\le C_{L_f,T}E\bigg[ \cW_2^2(L^N(\X_T), \cL(X_T)) + \sup_{u \in [s,T]}|X^{1,N}_u-X^{1}_u|^2\\\notag
		&\quad + \int_t^T| Y^{1,N}_u-Y^{1}_u|^2du+\int_t^T\cW^2_2(L^N(\X_u,\Y_u),\cL(X_u,{Y_u}))\,du \mid \cF_t^N \bigg] \\\notag
		& \le C_{L_f,T} E\bigg[ \cW^2_2(L^N(\X_T), \cL( X_T)) \\
		\label{eq: estim DY before trianglular}
		&\quad + \int^T_{0}\cW^2_2(L^N(\X_u, \Y_u), \cL(X_u, Y_u))\,du + \int^T_0|Y^{1,N}_u - Y_u^1|^2\,du  \mid \cF_t^N \bigg],
	\end{align}
	where the second inequality follows by  \eqref{eq: sde conv syst} and $C_{L_f,T}>0$ is a constant depending only on $L_f$ and $T$.
	If $T$ is small enough, then we have
	\begin{align*}
		&\sup_{t \in [s,T]}E[|Y^{1,N}_t - Y_t^1 |^2] \\
		&\le C_{L_f,T} E\bigg[ \cW^2_2(L^N(\X_T), \cL( X_T)) + \int_0^T\cW^2_2(L^N(\X_u, \Y_u), \cL(X_u, Y_u))\,du \bigg]\\
				&\le C(r_{N,q+\xdim, k} + r_{N,\xdim,k})
	\end{align*}
	where the last inequality follows from \eqref{eq:mom bound y system}, and
	where we also used that 
	\begin{equation}
	\label{eq:Wars pro vs non pro}
		\cW^2_2(L^N(\X_T), \cL( X_T))  \le  \cW^2_2(L^N(\X_T, \Y_T), \cL(X_T, Y_T)).
	\end{equation}
	Thus, using \eqref{eq: sde conv syst} leads to
	\begin{align}
	\nonumber
		E\Big[\sup_{t \in [s,T]}|X^{1,N}_t - X^1_t|^2 \Big]
		\label{eq:estimation x in proof}
		&\le C\big( r_{N,q+\xdim, k} + r_{N,\xdim,k}\big).
	\end{align}
	Finally, coming back \eqref{eq: estim DY before trianglular} yields the bound for $\|Z^{1,1,N} - Z^1\|_{\mathcal{H}^2(\mathbb{R}^\xdim\times \mathbb{R}^d)}$.
	This concludes the proof.
\end{proof}

\subsection{Propagation of chaos under monotonicity conditions}
The next result shows that under additional monotonicity conditions Theorem \ref{thm:mom bound syst} can be extended to arbitrary time duration $T>0$.
These monotonicity conditions are classical in the analysis of FBSDE, they are for instance used in \cite{Peng-Wu99,Delarue,Ben-Yam-Zhang15}.
Here, it is important to distinguish the Lipschitz--constant of $B$ in each of its arguments.
Thus, in \ref{b1}, we write
\begin{equation*}
	|B_t(x,y,\xi)-B_t(x',y',\xi')|\le L_{B,x}|x-x'|+L_{B,y}|y-y'| +  L_{B,\xi}\mathcal{W}_2(\xi, \xi') 
\end{equation*}
for some $L_{B,x}, L_{B,y}, L_{B,\xi}>0$ and all $x,x' \in \mathbb{R}^\xdim$, $y,y'\in \mathbb{R}^\ydim$ and $\xi,\xi'\in \mathcal{P}_2(\mathcal{R}^\xdim\times\mathbb{R}^\ydim)$.
\begin{theorem}[Monotonicity conditions]
\label{thm:chaos.T.arbitray}
	Assume that the conditions \ref{b1}-\ref{b2}, \ref{a5} are satisfied and that there is $k>2$ such that $E[|x^1_0|^k]<\infty$.
	Further assume that the McKean-Vlasov FBSDE \eqref{eq:MkVFBSDE} admits a unique solution $(X,  Y, Z)\in \mathcal{S}^2(\mathbb{R}^\xdim)\times \mathcal{S}^2(\mathbb{R}^\ydim)\times \mathcal{H}^2(\mathbb{R}^{\ydim\times d}) $ and:
	\begin{enumerate}[label = (\textsc{B3}), leftmargin = 30pt]
		\item there is a constant $K_B>0$ such that the following monotonicity property holds
		\begin{align*}
	 		(x - x')\cdot \Big( B_t(x,y,\xi)-B_t(x',y,\xi)\Big) \le -K_B|x-x^\prime|^2
		\end{align*}
	 	for all $x,x^\prime \in \mathbb{R}^\xdim$ and $(t,y,\xi) \in [0,T]\times\mathbb{R}^\ydim\times \cP_2(\mathbb{R}^\xdim\times \mathbb{R}^\xdim)$.
		\label{b3}
	\end{enumerate} 
	If the constant $K_B$ satisfies \eqref{eq:choiceK_b.k} and
	\begin{equation*}
		K_B > 8T(L_G^2 + L_FT) (L_{B,\xi} + L_{B,y})^2\exp\Big\{ 2L_F\Big(6 + L_F\Big)\Big\}  + 2L_{B,\xi} ,
	\end{equation*}	
	then it holds
	\begin{equation*}
	 	\sup_{t\in [0,T]}E\bigg[\cW_2^2\big(L^N(\underline{X}_t, \underline{Y}_t), \cL(X_t, Y_t) \big)\bigg] \le Cr_{N,q+\xdim, k} 
	\end{equation*}
	and 
	\begin{equation*}
	 	\sup_{t\in [0,T]}\Big(E\Big[ |X^{i,N}_t - X^i_t|^2\Big] + E\Big[|Y^{i,N}_t - Y^i_t|^2 \Big]\Big) + E\bigg[\int_0^T|Z^{i,i,N}_t -  Z^i_t|^2\,dt \bigg] \le Cr_{N,q+\xdim, k}
	\end{equation*}
	for all $t\in [0,T]$,  $N\in \mathbb{N}$ and for a constant $C>0$.
\end{theorem}
\begin{proof}
	As in the proof of Theorem \ref{thm:mom bound syst}, let $(\tilde X^i,\tilde Y^i, \tilde Z^i)_{1\le i\le N}$ be $N$ i.i.d. copies of the solution $(X, Y, Z)$ of the Mckean--Vlasov equation \eqref{eq:MkVFBSDE}.
	We will use the shorthand notation $\Delta X^i_t:= X^{i,N}_t - \tilde X^i_t$, $\Delta Y^i_t: = Y^{i,N}_t - \tilde Y^i_t$ and $\Delta Z^{i,j}_t:= Z^{i,j,N}_t - \delta_{\{i=j\}}\tilde Z^{i}_t$.
	Applying It\^o's formula, we have
	\begin{align*}
		|\Delta X^{i}_t|^2 &= 2\int_0^T\Delta X^i_u\cdot \Big( B_u(X^{i,N}_u, Y^{i,N}_u, L^N(\underline{X}_u, \underline{Y}_u)) - B_u(\tilde X^i_u, \tilde Y^i_u, \cL(X_u, Y_u)\Big)\,du\\
			& = 2\int_0^t\Delta X^i_u\cdot \Big( B_u(X^{i,N}_u, Y^{i,N}_u, L^N(\underline{X}_u, \underline{Y}_u)) - B_u(\tilde X^i_u, Y^{i,N}_u, L^N(\underline{X}_u, \underline{Y}_u))\Big)\,du\\
			&\quad + 2\int_0^t\Delta X^i_u\cdot \Big(B_u(\tilde X^i_u, Y^{i,N}_u, L^N(\underline{X}_u, \underline{Y}_u)) - B_u(\tilde X^i_u, \tilde Y^i_u, \cL(X_u, Y_u))\Big)\,du\\
			&\le 2\int_0^t-K_B|\Delta X_u^i|^2 + {\color{black}L_{B,y}}|\Delta X^i_u||\Delta Y^i_u| + {\color{black}L_{B,\xi}}|\Delta X^i_u|\Big(\frac1N\sum_{j=1}^N|\Delta X^j_u|^2 + |\Delta Y^j_u|^2 \Big)^{1/2}\\
			&\quad  + {\color{black}L_{B,\xi}}|\Delta X^i_u|\cW_2(L^N(\underline{\tilde X}_u,\underline{\tilde Y}_u), \cL(X_u, Y_u) )\,du
	\end{align*}
	where the latter inequality follows by the monotonicity property and Lipschitz--continuity of $B$ and triangular inequality applied on the Wasserstein distance.
	Now, applying Young's inequality with some $\varepsilon>0$, we obtain
	\begin{align}
	\notag
		|\Delta X^i_t|^2
		&\le 2\int_0^t{\color{black}\Big(\frac{L_{B,y} + L_{B,\xi}}{2\varepsilon} + L_{B,\xi} -K_B \Big)}|\Delta X^i_u|^2 + \frac{L_{B,\xi}}{2}\frac1N\sum_{j=1}^N|\Delta X^j_u|^2\,du\\
		\label{eq:monon.boundx.2}
		&\quad  + \int_0^t\varepsilon L_{B,y}|\Delta Y^i_u|^2 + \varepsilon L_{B,\xi}\frac1N\sum_{j=1}^N|\Delta Y^j_u|^2 
		 + L_{B,\xi}\cW^2_2(L^N(\underline{\tilde X}_u, \underline{\tilde Y}_u), \cL(X_u, Y_u))\,du.
	\end{align}
	Thus, taking the average on both sides gives
	\begin{align*}
		\frac1N\sum_{j=1}^N|\Delta X^j_t|^2 &\le 2\int_0^t{\color{black}\Big(\frac{L_{B,y} + L_{B,\xi}}{2\varepsilon} + 2L_{B,\xi} -K_B \Big)}\frac1N\sum_{j=1}^N|\Delta X^j_u|^2\,du\\
		&\quad +  \int_0^t\varepsilon(L_{B,\xi} + L_{B,y}) \frac1N\sum_{j=1}^N|\Delta Y^j_u|^2  + L_{B,\xi}\cW^2_2(L^N(\underline{\tilde X}_u, \underline{\tilde Y}_u), \cL(X_u, Y_u))\,du.
	\end{align*}
	Next, we apply Gronwall's inequality to arrive at the bound
	\begin{align}
	\notag
		&\frac1N\sum_{j=1}^N|\Delta X^j_t|^2\\\label{eq:mono.boundx1}
		 &\le e^{2\delta(\varepsilon)T}\int_0^t \varepsilon(L_{B,\xi} + L_{B,y}) \frac1N\sum_{j=1}^N|\Delta Y^j_u|^2 
		 + L_{B,\xi}\cW^2_2(L^N(\underline{\tilde X}_u, \underline{\tilde Y}_u), \cL(X_u, Y_u))\,du
	\end{align}
	where we introduced the constant
	\begin{equation*}
		\delta(\varepsilon) := \frac{L_{B,y} + L_{B,\xi}}{2\varepsilon} + 2L_{B,\xi} -K_B .
	\end{equation*}
	Let us now turn to the backward process.
	Here again, we apply It\^o's formula to get
	\begin{align*}
		|\Delta Y^i_t|^2& = |G(X^{i,N}_T, L^N(\underline{X}_T)) - G(\tilde X_T, \cL(X_T))|^2\\
		&+2\int_t^T\Delta Y^i_u\cdot \Big(F_u(X^{i,N}_u, Y^{i,N}_u, Z^{i,i,N}_u, L^N(\underline{X}_u, \underline{Y}_u)- F_u(\tilde X^i_u, \tilde Y^i_u, \tilde Z^i_u, \cL(X_u, Y_u))) \Big)\,du\\
		&-\sum_{j=1}^N\int_t^T|\Delta Z^{i,j}_u |^2\,du - \sum_{j=1}^N\int_t^T2\Delta Y^i_u\Delta Z^{i,j}_u \,dW^j_u\\
		&\le 2L_G^2\Big(|\Delta X^i_T|^2 + \frac1N\sum_{j=1}^N|\Delta X^j_T|^2 + \cW_2^2(L^N(\underline{\tilde{X}}_T),\cL(X_T))\Big) + 2L_F\int_t^T|\Delta Y^i_u|\Big\{ |\Delta X^i_u| + |\Delta Y^i_u| + |\Delta Z^{i,i}_u|\\
		&\quad + \Big(\frac1N\sum_{j=1}^N|\Delta X^j_u|^2 + |\Delta Y^j_u|^2\Big)^{1/2} + \cW_2(L^N(\underline{\tilde{X}}_u, \underline{\tilde{Y}}_u), \cL(X_u, Y_u))\Big\}\,du\\
		&\quad-\sum_{j=1}^N\int_t^T|\Delta Z^{i,j}_u |^2\,du - \sum_{j=1}^N\int_t^T2\Delta Y^i_u\Delta Z^{i,j}_u \,dW^j_u
	\end{align*}
	where we used Lipschitz--continuity of $F$ and $G$.
	Now, we apply Young's inequality with some constant $\eta>0$ and then take conditional expectation on both sides (the martingale property follows from integrability properties proved above) to arrive at
	\begin{align}
	\notag
		|\Delta Y^i_t|^2& + (1-\eta L_F)E\Big[\sum_{j=1}^N\int_t^T|\Delta Z^{i,j}_u|^2\,du\mid \cF^N_t\Big]
		 \le 2L_G^2E\Big[|\Delta X^i_T|^2 + \frac1N\sum_{j=1}^N|\Delta X^j_T|^2\mid \cF^N_t\Big]\\\notag
			&\quad+ 2L_FE\bigg[\int_t^T\Big(5 + \frac1\eta\Big)|\Delta Y^i_u|^2 +\frac1N\sum_{j=1}^N|\Delta Y^j_u|^2 + |\Delta X^i_u|^2 +\frac1N\sum_{j=1}^N|\Delta X^j_u|^2\\\label{eq:monotone.bou}
			&\qquad + \cW^2_2(L^N(\underline{\tilde{X}}_u, \underline{\tilde{Y}}_u),\cL(X_u,Y_u))\,du\mid \cF^N_t\bigg] + 2L_G^2E\Big[\cW_2^2(L^N(\underline{\tilde{X}}_T),\cL(X_T)) \mid \cF^N_t\Big].
	\end{align}
	Averaging on both sides and choosing $\eta$ small enough that $1 - \eta F>0$ yields
	\begin{align*}
		\frac1N\sum_{j=1}^N|\Delta Y^j_t|^2& + (1-\eta L_F)\frac1N\sum_{j=1}^NE\Big[\int_t^T|\Delta Z^{j,j}_u|^2\,du\mid \cF^N_t\Big] \le 4L_G^2E\Big[\frac1N\sum_{j=1}^N|\Delta X^j_T|^2\mid \cF^N_t\Big]\\
			&+ 2L_FE\bigg[\int_t^T\Big(6 + \frac1\eta\Big)\frac1N\sum_{j=1}^N|\Delta Y^j_u|^2 + 2\frac1N\sum_{j=1}^N|\Delta X^j_u|^2\\
			&\quad + \cW^2_2(L^N(\underline{\tilde X}_u, \underline{\tilde Y}_u),\cL(X_u,Y_u))\,du\mid \cF^N_t\bigg] + 2L_G^2E\Big[\cW_2^2(L^N(\underline{\tilde{X}}_T),\cL(X_T)) \mid \cF^N_t\Big].
	\end{align*}
	We will subsequently apply Gronwall's inequality, take expectation on both sides and then integrate in time.
	Thus, due to Fubini's theorem we have
	\begin{align*}
		&E\bigg[\frac1N\sum_{j=1}^N\int_0^T|\Delta Y^j_t|^2\,dt\bigg]  \le 4L_G^2Te^{\bar \delta (\eta)T}E\bigg[\frac1N\sum_{j=1}^N|\Delta X^j_T|^2\bigg] + 2TL_G^2e^{\bar \delta (\eta)T}E\Big[\cW_2^2(L^N(\underline{\tilde{X}}_T),\cL(X_T)) \Big]\\
		& \qquad	+ 2L_FTe^{\bar\delta (\eta)T}E\bigg[\int_0^T 2\frac1N\sum_{j=1}^N|\Delta X^j_u|^2 + \cW^2_2(L^N(\underline{\tilde X}_u, \underline{\tilde Y}_u),\cL(X_u,Y_u))\,du\bigg]
	\end{align*}
	where we introduced the constant
	\begin{equation*}
		\bar \delta (\eta) := 2L_F\Big(6 + \frac1\eta\Big).
	\end{equation*}
	Using \eqref{eq:mono.boundx1}, we further bound the above as
	\begin{align*}
		&E\bigg[\frac1N\sum_{j=1}^N\int_0^T|\Delta Y^j_t|^2\,dt\bigg]  \le \Gamma_{\varepsilon,T,G,B,F}E\bigg[\frac1N\sum_{j=1}^N\int_0^T|\Delta Y^j_t|^2\,dt\bigg] + 2TL_G^2E\Big[\cW_2^2(L^N(\underline{\tilde{X}}_T),\cL(X_T)) \Big]\\
		& \qquad	+ 4T(L_G^2L_{B,\xi} +L_FT)(L_{B,\xi}+1)e^{\bar\delta (\eta)T}e^{2\delta(\varepsilon)T}E\bigg[\int_0^T\cW^2_2(L^N(\underline{\tilde X}_u, \underline{\tilde Y}_u),\cL(X_u,Y_u))\,du\bigg]
	\end{align*}
	with 
	\begin{equation*}
		\Gamma_{\varepsilon,T,G,B,F} := 4\varepsilon T(L_G^2 + L_FT)  e^{\bar\delta (\eta)T}e^{2\delta(\varepsilon)T}(L_{B,\xi} + L_{B,y}).
	\end{equation*}
	First choose $\varepsilon$ small enough that
		\begin{equation*}
			4\varepsilon T(L_G^2 + L_FT)  e^{\bar\delta (\eta)T}(L_{B,\xi} + L_{B,y}) <1.
	\end{equation*}
	This $\varepsilon$ does not depend on $K_B$.
	With such an $\varepsilon$ at hand, choose $K_B$ large enough that $\delta(\varepsilon)\le 0$.
	Thus, we need 
	\begin{equation*}
		K_B \ge T(L_G^2 + L_FT)  e^{\bar\delta (\eta)T}(L_{B,\xi} + L_{B,y})^2  + L_{B,\xi}.
	\end{equation*}
	This implies that $\Gamma_{\varepsilon, T, G, B, F}<1$.
	Hence, we have
	\begin{align*}
		E\bigg[\frac1N\sum_{j=1}^N\int_0^T|\Delta Y^j_t|^2\,dt\bigg]  &\le \frac{4T(L_G^2L_{B,\xi} +2L_FT)(L_{B,\xi}+1)e^{\bar\delta (\eta)T}}{1 - \Gamma_{\varepsilon,T,G,B,F}}E\bigg[\int_0^T\cW^2_2(L^N(\underline{\tilde X}_u, \underline{\tilde Y}_u),\cL(X_u,Y_u))\,du\bigg]\\
		&\quad + \frac{2TL_G^2}{1-\Gamma_{\varepsilon,T,G,B,F}}E\Big[\cW_2^2(L^N(\underline{\tilde{X}}_T),\cL(X_T)) \Big].
	\end{align*}
	This also implies, due to \eqref{eq:mono.boundx1}, that
	\begin{align*}
		E\bigg[\frac1N\sum_{j=1}^N\int_0^T|\Delta X^j_t|^2\,dt\bigg] & \le CE\bigg[\int_0^T\cW^2_2(L^N(\underline{\tilde X}_u, \underline{\tilde Y}_u),\cL(X_u,Y_u)))\,du\bigg]\\
		&\quad + CE\Big[\cW_2^2(L^N(\underline{\tilde{X}}_T),\cL(X_T)) \Big]
	\end{align*}
	for some constant $C>0$.

	We will now use these inequalities to show the claimed convergence results.
	Going back to \eqref{eq:monon.boundx.2} and (recalling the choice of $\varepsilon$), we have
	\begin{align}
	\notag
		E[|\Delta X^i_t|^2] &\le E\bigg[  2\int_0^t \frac{L_{B,\xi}}{2}\frac1N\sum_{j=1}^N|\Delta X^j_u|^2\,du\\\notag
		&\quad  + \int_0^t\varepsilon L_{B,y}|\Delta Y^i_u|^2 + \varepsilon L_{B,\xi}\frac1N\sum_{j=1}^N|\Delta Y^j_u|^2 
		 + L_{B,\xi}\cW^2_2(L^N(\underline{\tilde X}_u, \underline{\tilde Y}_u), \cL(X_u, Y_u))\,du\bigg]\\\label{eq:monon.boundx3}
		&\le CE\bigg[\int_0^T\cW^2_2(L^N(\underline{\tilde X}_u, \underline{\tilde Y}_u), \cL(X_u, Y_u))\,du\bigg] + CE\Big[\cW_2^2(L^N(\underline{\tilde{X}}_T),\cL(X_T)) \Big]
		+\varepsilon L_{B,\xi}E\bigg[\int_0^t|\Delta Y^i_u|^2\,du\bigg].
	\end{align}
	Plugging this bound in \eqref{eq:monotone.bou}, gives
	\begin{align}
	\notag
		E[|\Delta Y^i_t|^2] &  \le 2L_G^2e^{\bar \delta (\eta)}E\Big[|\Delta X^i_T|^2  + \frac1N\sum_{j=1}^N|\Delta X^j_T|^2\Big] + 2L_G^2e^{\bar\delta(\eta)}E\Big[\cW_2^2(L^N(\underline{\tilde{X}}_T),\cL(X_T)) \Big]\\\notag
		&\quad + 2L_Fe^{\bar \delta(\eta)}E\bigg[\int_t^T\frac1N\sum_{j=1}^N|\Delta Y^j_u|^2  + |\Delta X^i_u|^2 +\frac1N\sum_{j=1}^N|\Delta X^j_u|^2 + \cW^2_2(L^N(\underline{\tilde X}_u, \underline{\tilde Y}_u),\cL(X_u,Y_u))\,du\bigg]\\\notag
		&\le CE\bigg[\int_0^T\cW^2_2(L^N(\underline{\tilde X}_u, \underline{\tilde Y}_u), \cL(X_u, Y_u))\,du\bigg] + CE\Big[\cW_2^2(L^N(\underline{\tilde{X}}_T),\cL(X_T)) \Big]\\
		&\quad	+2\varepsilon L_{B,\xi}Te^{\bar\delta(\eta)T}(L_F+L_G)E\bigg[\int_0^T|\Delta Y^i_u|^2\,du\bigg].
	\end{align}
	We now integrate in time on both sides, we use Fubini's theorem and further choose $\varepsilon$ small enough that $2\varepsilon L_{B,\xi}Te^{\bar\delta(\eta)T}(L_F+L_G)<1$.
	This allows to obtain the bound
	\begin{align*}
		E\bigg[\int_0^T|\Delta Y^i_u|^2\,du\bigg]\le CE\bigg[\int_0^T\cW^2_2(L^N(\underline{\tilde X}_u, \underline{\tilde Y}_u), \cL(X_u, Y_u)))\,du\bigg] + CE\Big[\cW_2^2(L^N(\underline{\tilde{X}}_T),\cL(X_T)) \Big].
	\end{align*}
	Thus, due to \eqref{eq:monon.boundx3}, we have
	\begin{align*}
		E\Big[|\Delta X^i_t|^2\Big]\le CE\bigg[\int_0^T\cW^2_2(L^N(\underline{\tilde X}_u, \underline{\tilde Y}_u), \cL(X_u, Y_u)))\,du\bigg] + CE\Big[\cW_2^2(L^N(\underline{\tilde{X}}_T),\cL(X_T)) \Big]
	\end{align*}
	for all $t\in [0,T]$.
	Going back once again to  \eqref{eq:monotone.bou} (after taking expectation and using Gronwall's inequality) allows to obtain the bound
	\begin{align*}
		E\bigg[|\Delta Y^i_t|^2 + \int_0^T|\Delta Z^i_u|\,du\bigg] &\le CE\bigg[\int_0^T\cW^2_2(L^N(\underline{\tilde X}_u, \underline{\tilde Y}_u), \cL(X_u, Y_u)))\,du\bigg]\\
		&\quad + CE\Big[\cW_2^2(L^N(\underline{\tilde{X}}_T),\cL(X_T)) \Big].
	\end{align*}
	Finally observe that by triangular inequality we have
	\begin{align*}
		&E\bigg[\cW^2_2(L^N(\underline{ X}_t, \underline{ Y}_t), \cL(X_t, Y_t)))\bigg]\\
		& \le 2\frac1N\sum_{j=1}^NE\bigg[|\Delta X^j_t|^2 + |\Delta Y^j_t|^2\bigg]+ 2E\bigg[\cW^2_2(L^N(\underline{\tilde X}_t, \underline{\tilde Y}_t), \cL(X_t, Y_t)))\bigg].
	\end{align*}
	This concludes the proof since the bound
	\begin{equation*}
		E\bigg[\cW^2_2(L^N(\underline{\tilde X}_t, \underline{\tilde Y}_t), \cL(X_t, Y_t))\bigg] + E\Big[\cW_2^2(L^N(\underline{\tilde{X}}_T),\cL(X_T)) \Big] \le Cr_{N,q+ \ell,k}.
	\end{equation*}
	follows by \cite[Theorem 1]{Fou-Gui15} and Lemma \ref{lem: moment d+5 system}.
\end{proof}

\subsection{Concentration estimates}
We conclude this section with some deviation and dimension-free concentration estimates to strengthen the above convergence results.
\label{sec:concen inequ}
\begin{theorem}
\label{thm:concentration syst}
	Assume that the conditions \ref{b1}-\ref{b2} and \ref{a5} are satisfied and that the McKean-Vlasov FBSDE \eqref{eq:MkVFBSDE} admits a unique solution $(X,Y,Z) \in \mathcal{S}^{2}(\mathbb{R}^\xdim)\times \mathcal{S}^{2}(\mathbb{R}^q)\times \mathcal{H}^{2}(\mathbb{R}^{q\times d})$. Then we have the following concentration estimations:
	\begin{enumerate}
		\item
		If there is $k>4$ such that $E[|x^1_0|^k]<\infty$, then for every $\varepsilon\in (0,\infty)$, $N\ge1$ it holds that
	\begin{align}
	\label{eq:conent bound}
	&	\sup_{t \in [0,T]}	P\left(\mathcal{W}_2^2(L^N(\X_t,\Y_t), \cL(X_t,Y_t)) \ge \varepsilon\right) \\\notag
	&	\le 
			C\big(a_{N,\frac{\varepsilon}{2}} 1_{\{\varepsilon<2\}} + b_{N,k,\frac{\varepsilon}{2}}+\frac{2}{\varepsilon}(r_{N,q+\xdim, k} + r_{N,\xdim,k}) \big)
	\end{align}
	for some constant $C>0$ which does not depend on $N,\varepsilon$,
	with $b_{N,k,\varepsilon} := N(N\varepsilon)^{-(k-\varepsilon)/2}$ and
	\begin{equation*}
		a_{N,\varepsilon} := \begin{cases}
			\exp(-cN\varepsilon^2) &\quad \text{if } q+\xdim<4\\
			\exp(-c N(\varepsilon/\log(2+1/\varepsilon))^2)& \quad \text{if } q+\xdim=4\\
			\exp(-cN\varepsilon^{(q+\xdim)/2})&\quad \text{if } q+\xdim>4
		\end{cases}
	\end{equation*}
	for two positive constants $C$ and $c$ depending only on $L_f$, $T$, $\sigma$, $k$ and $E[|x^1_0|^k]$.

	\item	There is a constant $c(L_f)>0$ such that if $T<c(L_f)$, then denoting by $\mu^N$ the $N$-fold product of the law $\mathcal{L}(X,Y)$ of $(X,Y)$, it holds that
		\begin{equation}
		\label{eq:gozlan}
			\mu^N\Big(H - \int H\,d\mu^N\ge\varepsilon \Big)\le e^{-K\varepsilon^2}
		\end{equation}
		for every $1$--Lipschitz continuous function $H \in C([0,T],\mathbb{R}^{\xdim+q})^N$ for some constant $K$ depending on $L_f, T$ and $\sigma$, but not on $(N, \xdim,q,d)$.
		If \ref{b2} is replaced by \ref{b2prime}, then \eqref{eq:gozlan} holds for all $T>0$.
\end{enumerate}
\end{theorem}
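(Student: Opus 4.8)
The plan is to treat the two claims by different tools: (1) will follow from the coupling and moment estimates already established, combined with the empirical-measure deviation bounds of Fournier and Guillin; (2) will be a Talagrand $T_2$ (transportation-information) argument that exploits the fact that, since \eqref{eq:MkVFBSDE} is uniquely solvable, the i.i.d.\ copies in \eqref{eq:fbsde for iid copies} solve a \emph{decoupled} FBSDE and hence have a law that is a Lipschitz image of the Wiener measure.

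\textbf{Part (1).} Fix the i.i.d.\ copies $(\tilde X^i,\tilde Y^i,\tilde Z^i)$ solving \eqref{eq:fbsde for iid copies}. By the triangle inequality for $\mathcal{W}_2$ and a union bound,
\begin{align*}
	\sup_t P\big(\mathcal{W}_2^2(L^N(\X_t,\Y_t),\cL(X_t,Y_t))\ge\varepsilon\big)
	&\le \sup_t P\big(\mathcal{W}_2^2(L^N(\X_t,\Y_t),L^N(\tilde{\X}_t,\tilde{\Y}_t))\ge\tfrac{\varepsilon}{2}\big)\\
	&\quad +\sup_t P\big(\mathcal{W}_2^2(L^N(\tilde{\X}_t,\tilde{\Y}_t),\cL(X_t,Y_t))\ge\tfrac{\varepsilon}{2}\big).
\end{align*}
For the first term I would use $\mathcal{W}_2^2(L^N(\X_t,\Y_t),L^N(\tilde{\X}_t,\tilde{\Y}_t))\le\frac1N\sum_{i=1}^N(|X^{i,N}_t-\tilde X^i_t|^2+|Y^{i,N}_t-\tilde Y^i_t|^2)$ together with Markov's inequality and the $L^2$-bound extracted from the proof of Theorem \ref{thm:mom bound syst} (namely \eqref{eq:last W22 bound}--\eqref{eq:last sum x bound} with $s=0$), which contributes a term of order $\varepsilon^{-1}(r_{N,q+\xdim,k}+r_{N,\xdim,k})$. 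For the second term, the $(\tilde X^i_t,\tilde Y^i_t)$ are i.i.d.\ with common law $\cL(X_t,Y_t)$ on $\mathbb{R}^{\xdim+q}$, which has a finite moment of order $k>4$ by Lemma \ref{lem: moment d+5 system} (extended to arbitrary $T$ through a partition of $[0,T]$, exactly as Theorem \ref{thm:mom bound syst} is); hence \cite[Theorem 2]{Fou-Gui15} applies and yields the terms $a_{N,\varepsilon/2}\mathbf{1}_{\{\varepsilon<2\}}+b_{N,k,\varepsilon/2}$ with the three regimes according as $q+\xdim$ is less than, equal to, or larger than $4$. Summing gives \eqref{eq:conent bound}; the strengthening to $k>4$ is precisely what the Fournier--Guillin deviation bound requires (a moment strictly above $2p=4$ for $\mathcal{W}_2$).

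\textbf{Part (2).} The key point is a \emph{dimension-free} $T_2$ inequality for the law of one McKean--Vlasov copy. Since \eqref{eq:MkVFBSDE} has a unique solution, the flow $t\mapsto\cL(X_t,Y_t)$ is a fixed datum, so each $(\tilde X^1,\tilde Y^1,\tilde Z^1)$ in \eqref{eq:fbsde for iid copies} solves a \emph{standard} (decoupled) FBSDE driven by $W^1$. For $T<c(L_f)$ (and, if \ref{b2prime} holds, for every $T$, cf.\ Remark \ref{rem:system exists} and \ref{a5}) this FBSDE admits a decoupling field $u:[0,T]\times\mathbb{R}^{\xdim}\to\mathbb{R}^q$, jointly continuous and Lipschitz in the space variable uniformly in time with a constant $L_u$ depending only on $L_f$, $T$ and $\sigma$, such that $\tilde Y^1_t=u(t,\tilde X^1_t)$ and $d\tilde X^1_t=\tilde b(t,\tilde X^1_t)\,dt+\sigma\,dW^1_t$ with $\tilde b$ Lipschitz uniformly in $t$. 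By Girsanov's theorem and the transportation-information inequality for laws of SDEs with Lipschitz drift and constant diffusion (Djellout--Guillin--Wu), $\cL(\tilde X^1)$ satisfies a $T_2$ inequality on $C([0,T],\mathbb{R}^{\xdim})$ with constant $K_1=K_1(L_f,T,\sigma)$ that does not depend on $\xdim$, since the reference Wiener measure satisfies $T_2$ with a dimension-free constant (assuming, as in the applications, that $\mu^{(0)}$ itself satisfies a $T_2$ inequality, e.g.\ is a Dirac mass). As $x_\cdot\mapsto(x_\cdot,u(\cdot,x_\cdot))$ is $(1+L_u^2)^{1/2}$-Lipschitz from $C([0,T],\mathbb{R}^{\xdim})$ into $C([0,T],\mathbb{R}^{\xdim+q})$, and $T_2$ is preserved (up to multiplying the constant by the square of the Lipschitz constant) under push-forward by Lipschitz maps, $\cL(X,Y)=\cL(\tilde X^1,\tilde Y^1)$ satisfies $T_2$ with constant $(1+L_u^2)K_1$. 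Because $T_2$ tensorizes with the \emph{same} constant, $\mu^N=\cL(X,Y)^{\otimes N}$ satisfies $T_2$ on the $N$-fold product path space (with the $\ell^2$-combined metric) with that same constant, and the Bobkov--G\"otze/Gozlan dual characterization of $T_2$ then gives the sub-Gaussian bound \eqref{eq:gozlan} with $K$ depending only on $L_f$, $T$ and $\sigma$. Under \ref{b2prime} the decoupling field and all the above estimates are available for every $T>0$, which removes the small-time restriction.

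\textbf{Main obstacle.} The crux is to guarantee the \emph{dimension-freeness} claimed in (2): the Lipschitz constant $L_u$ of the decoupling field and the $T_2$ constant of $\cL(\tilde X^1)$ must be controlled purely through $L_f$, $T$ and the operator-norm/ellipticity data of $\sigma$, with no dependence on $\xdim,q,d$. This requires running the FBSDE a priori estimates in a coordinate-free way (operator norms only) and using that $T_2$ tensorizes without any $N$-dependence. Subsidiary technical points are the time-regularity of $u$ needed for $t\mapsto(\tilde X^1_t,u(t,\tilde X^1_t))$ to define a genuine element of $C([0,T],\mathbb{R}^{\xdim+q})$, and the handling of the initial distribution $\mu^{(0)}$ in the $T_2$ bound for the forward component.
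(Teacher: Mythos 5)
Your proposal is correct and follows essentially the same route as the paper: part (1) is handled exactly as in the text by the triangle inequality, Markov's inequality combined with the coupling estimates, and the Fournier--Guillin deviation bound (which is why $k>4$ is imposed), and part (2) reproduces the paper's argument, where the i.i.d.\ copies solve a decoupled FBSDE with frozen law, the dimension-free $T_2$ inequality of Lemma \ref{lem:Talagrand for system} is obtained via the Lipschitz decoupling field and the Djellout--Guillin--Wu transportation inequality, and \eqref{eq:gozlan} follows from Gozlan's characterization of $T_2$ by dimension-free Gaussian concentration. Your side remark on the initial condition (Dirac mass or $T_2$ for $\mu^{(0)}$) is a fair observation and is consistent with the hypothesis the paper adds when this bound is invoked in Theorem \ref{thm:concen Nash}.
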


Let us start by the following lemma which gives a Talagrand $T_2$ inequality for the law of the solution of a foward-backward SDE.
Note that this result is not covered by \cite{T2bsde} since here, the system is fully coupled.
\begin{lemma}
\label{lem:Talagrand for system}
	Let $m_1, m_2\in \mathbb{N}$ and let $f:[0,T]\times \mathbb{R}^{m_1}\times \mathbb{R}^{m_2}\times \mathbb{R}^{m_2\times d} \to \mathbb{R}^{m_2}$, $b:[0,T]\times \mathbb{R}^{m_1}\times \mathbb{R}^{m_2}\to \mathbb{R}^{m_1}$ and $g:\mathbb{R}^{m_1}\to \mathbb{R}^{m_2}$ be such that $f(t, \cdot, \cdot,\cdot), b(t,\cdot,\cdot)$ and $g$ are three $L_f$--Lipschitz continuous function uniformly in $t$,
	and $\sigma \in \mathbb{R}^{m_1\times d}$ is a matrix satisfying \ref{a5}.
	Then there is a constant $c(L_f)>0$ depending only on $L_f$ such that if $T\le c(L_f)$, then
	the FBSDE
	\begin{equation}
	\label{eq:fbsde normal}
	\begin{cases}
		\displaystyle 
		X_t = x + \int_0^tb_u(X_u, Y_u)\,du + \sigma W_t\\
		\displaystyle 
		Y_t = g(X_T) + \int_t^Tf_u(X_u, Y_u,Z_u)\,du - \int_t^TZ_u\,dW_u
	\end{cases}
	\end{equation}
	admits a unique square integrable solution $(X,Y,Z)$, such that  $X$ and $Y$ have almost surely continuous paths 
	and 
	\begin{equation}
	\label{eq:talagrand for xy}
		\text{the law } \mathcal{L}(X, Y) \text{ of } (X, Y) \text{ satisfies } T_2(C_{x,y})
	\end{equation}
	for some constant $C_{x,y}$ (explicitly given in the proof) depending only on $L_f$, $T$ and $\sigma$, but which does not depend on $m_1, m_2$ and $d$.
	That is,
	\begin{equation*}
		\mathcal{W}_2(\mathcal{L}(X, Y), Q) \le \sqrt{C_{x,y}\mathcal{H}(Q|\mathcal{L}(X, Y))} \quad \text{for all } Q \in \mathcal{P}_2(\mathcal{C}([0,T],\mathbb{R}^{m_1 + m_2}))
	\end{equation*}
	where $\mathcal{H}$ is the Kullback-Leibler divergence defined\footnote{We use the convention $E[X] := +\infty$ whenever $E[X^+] = +\infty$.}, for any two probability measures $Q_1$ and $Q_2$  as
	 \begin{equation*}
	 	\mathcal{H}(Q_2|Q_1):= \begin{cases}
	 		E_{Q_2}[\log(\frac{dQ_2}{dQ_1})] &\text{ if } Q_2\ll Q_1\\
	 		+\infty &\text{ else}.
	 	\end{cases}
	 \end{equation*}
	 If one additionally assumes 
	 \begin{enumerate}[label = (\textsc{B2}''), leftmargin = 30pt]
		\item $|g(x)|\le L_f$,  $|f_t(x, y, z)|\le L_f(1 + |y|+ |z|)$ and $|b_t(x,y)|\le L_f(1 + |y|)$ for all $t, x, y,z$,
		\label{b1second}
	\end{enumerate}
	then \eqref{eq:talagrand for xy} holds for every $T>0$.
\end{lemma}
\begin{proof}
	This lemma follows from a combination of results in \cite{Dje-Gui-Wu}.
	First notice that the continuity of the paths of $(X, Y)$ is clear.
	In addition, there is a deterministic $L_v$--Lipschitz continuous, $v:[0,T]\times \mathbb{R}^{m_1} \to \mathbb{R}^{m_2}$ such that $Y_t^{s,x} = v(t, X^{s,x}_t)$ $P$-a.s., where $(X^{s,x}, Y^{s,x}, Z^{s,x})$ is the solution of \eqref{eq:fbsde normal} with $X^{s,x}_s=x$.
	We justify below that $v$ is $L_v$--Lipschitz continuous and the constant $L_v$ does not depend on $(m_1,m_2,d)$.
	But see already that as a consequence, the process $X$ satisfies the SDE
	\begin{equation*}
		X_t^{s,x} = x + \int_s^t\tilde b(u, X_u^{s,x})\,du + \sigma (W_t - W_s)
	\end{equation*}
	where the drift $\tilde b (t,x):= b(t, x, v(t,x))$ is $L_f(1+L_v)$--Lipschitz continuous with respect to the second variable.
	Thus it follows by \cite[Theorem 5]{Pal12} (which extends the original work \cite{Dje-Gui-Wu}) that the law $\mathcal{L}(X)$ of $X$ satisfies $T_2(C_1)$ with constant $C_1 = 4|\sigma|^2Te^{4T(L_f^2L_v^2T +1)}$.
	Therefore, by \cite[Lemma 2.1]{Dje-Gui-Wu}, we can now deduce that the law $\cL(X, Y)$ satisfies $T_2(C_{x,y})$ with 
	\begin{equation}
	\label{eq:Talagrand.fbsde.constant}
		C_{x,y}:=C_1(1 + L_v)^2.
	\end{equation}
	In particular, $C_{x,y}$ does not depend on $m_1, m_2$ and $d$.

	To conclude the proof, it remains to justify that $L_v$ does not depend on the dimension.
	If $T\le c(L_f)$ is sufficiently small, then this follows by \cite[Corollary 1.4]{Delarue}.
	If $T$ is arbitrary and the condition \ref{b1second} is satisfied, then this follows from \cite[Theorem 4.12]{MR3752669} or (the proof of) \cite[Theorem 2.5]{Kup-Luo-Tang18}.
	In the latter reference, it is actually shown that $L_v \equiv K_5 := \sqrt{2L_f^2 + L_fT}e^{L_fT}$.
\end{proof}
\begin{proof}(of Theorem \ref{thm:concentration syst})
	By triangular inequality, we have
	\begin{align}
	\nonumber
		P\left(\mathcal{W}^2_2(\cL(X_t, Y_t), L^N(\X_t, \Y_t)) \ge \varepsilon \right) &\le P\left( \mathcal{W}^2_2(\cL(X_t, Y_t), L^N(\tilde \X_t,\tilde \Y_t))\ge \varepsilon/2 \right)\\
		\label{eq:estim for concen}
		&+	P\Big(\frac1N\sum_{i=1}^N|\tilde X^i_t - X^{i,N}_t|^2 + |\tilde Y^i_t - Y^{i,N}_t|^2\ge \varepsilon/2 \Big).
	\end{align}
	The first term on the right hand side is estimated as
	\begin{align*}
		P\left( \mathcal{W}^2_2(\cL(X_t, Y_t), L^N(\tilde \X_t,\tilde \Y_t))\ge \varepsilon/2 \right) &\le C(a_{N,\frac{\varepsilon}{2}} 1_{\{\varepsilon<2\}} + b_{N,k,\frac{\varepsilon}{2}}).
	\end{align*}
	This follows by \cite[Theorem 2]{Fou-Gui15} since, by Lemma \ref{lem: moment d+5 system}, the processes $Y$ and $X$ have moments of order $k>4$.
	On the other hand, by Markov's inequality, we have
	\begin{align*}
		&P\Big(\frac1N\sum_{i=1}^N|\tilde X^i_t - X^{i,N}_t|^2 + |\tilde Y^i_t - Y^{i,N}_t|^2 \ge \varepsilon/2 \Big)\\
		 &\qquad \le \frac{2}{\varepsilon}\frac1N\sum_{i=1}^NE|\tilde X^i_t - X^{i,N}_t|^2 + E|\tilde Y^i_t - Y^{i,N}_t|^2\le \frac{C}{\varepsilon}\Big(r_{N,q+\xdim, k} + r_{N,\xdim,k} \Big),
	\end{align*}
	where the second inequality follows by Theorem \ref{thm:mom bound syst}.
	Combine this with \eqref{eq:estim for concen} to get \eqref{eq:conent bound}.

	Let us now turn to the proof of the concentration estimate \eqref{eq:gozlan}.
	Recall that the i.i.d. copies $(\tilde X^1,\tilde Y^1, \tilde Z^1), \dots,$ $(\tilde X^N,\tilde Y^N, \tilde Z^N)$, of $(X,Y,Z)$ solve the FBSDE \eqref{eq:MkVFBSDE} with $W$ replaced by $W^i$.
	Thus, they satisfy the equation \eqref{eq:fbsde normal} with $W$ replaced by $W^i$
	with the $L_f$--Lipschitz-continuous functions $b_t, f_t$ and $g$ being defined respectively as $g(x):= G(x, \cL(X_T))$, $f_t(x,y,z):= F_t(x,y,z,\cL(X_t, Y_t))$ and $b_t(x,y):= B_t(x,y, \cL(X_t, Y_t))$.
	Therefore, it follows by Lemma \ref{lem:Talagrand for system} that the law $\cL(X^i, Y^i) = \cL(X, Y)$ satisfies $T_2(C)$.
	Thus, by \cite[Theorem 1.3]{Gozlan09} we obtain \eqref{eq:gozlan}.
\end{proof}

\section{Approximation of the mean field game} 
\label{sec:limits}
This section of the paper is dedicated to the proofs of Theorems \ref{thm:main limit} and \ref{thm:concen Nash} stated in Section \ref{sec:main results}.
We start by the proof of the convergence of Nash equilibria.

\subsection{Proofs of Theorem \ref{thm:main limit} and Theorem \ref{thm:main.limit.arbitrary.time} }
In this section we provide the proof of the convergence of the Nash-equilibrium of the $N$-player game with interaction through state and control to the extended mean-field game.
The proof relies on the Pontryagin maximum principles derived in Section \ref{sec:pontryagin}, along with the propagation of chaos type results of the previous section.

Recall notation of Sections \ref{sec:main results} and \ref{sec:pontryagin} and the solution $(Y^{i,j}, Z^{i,j,k})_{i,j,k=1,\dots, N}$ of the adjoint equation of the game given in Equation \eqref{eq:adjoint-fctN}.
We will consider the off-diagonal processes $Y^{i,j}$, $i\neq j$ and then the diagonal terms $Y^{i,i}$.
The next two auxiliary results show that the off-diagonal elements of $Y^{i,j}$ converge to zero.
\begin{lemma}
	\label{lem:integrability}
	Assume that the conditions \ref{a1}-\ref{a5} are satisfied.
	If either $T$ is small enough for \ref{M} holds with $K_b$ large enough, then the solution $(Y^{i,j}, Z^{i,j,k})_{i,j,k=1,\dots, N}$ of the adjoint equation \eqref{eq:adjoint-fctN} along with the processes $X^{i,\underline{\hat\alpha}}$ satisfy
	\begin{equation*}
		E\bigg[\frac1N\sum_{i=1}^N|X^{i,\underline {\hat\alpha}}_t|^2 \bigg] \le C_t\quad \text{and}\quad
		E\bigg[|Y^{i,i}_t|^2 + \sup_{t \in [0,T]}|X^{i,\underline{\hat\alpha}}_t|^2 +\sum_{k=1}^N\int_0^T|Z^{i,i,k}_t|^2\,dt\bigg] \le C
	\end{equation*}
	for two constants $C_t,C>0$ which do not depend on $i,j,N$.
\end{lemma}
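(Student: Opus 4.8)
The plan is to prove the forward bounds by elementary SDE estimates and then to treat the whole adjoint family $(Y^{i,j},Z^{i,j,k})_{j,k}$ (for a fixed $i$) at once, by viewing it as a single linear BSDE on the product space $\mathbb{R}^{\xdim N}$ and invoking the classical a priori estimates; the entire difficulty lies in checking that none of the constants degrades as $N\to\infty$. Throughout I will use only \ref{a1}, \ref{a3}, \ref{a4} (together with \ref{a5}), and the fact that admissibility forces $E[\int_0^T|\hat\alpha^i_t|^2dt]<A$ with $A$ independent of $N$.

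For the forward part I would fix $N$, write $X^i:=X^{i,\underline{\hat\alpha}}$ (which lies in $\mathcal{S}^{2}(\mathbb{R}^\xdim)$ for each fixed $N$ by standard SDE theory), and use the linear growth of $b$ from \ref{a1} together with the fact that, when $\xi=L^N(\underline X_u,\underline{\hat\alpha}_u)$, the quantity $(\int|v|^2\,\xi(dv))^{1/2}$ equals $(\frac1N\sum_l(|X^l_u|^2+|\hat\alpha^l_u|^2))^{1/2}$. Squaring \eqref{eq:N-SDE}, taking a supremum in time and expectations (Doob, Jensen, Fubini) and averaging over $i$, one obtains for $\phi(t):=\frac1N\sum_i E[\sup_{s\le t}|X^i_s|^2]$ an inequality $\phi(t)\le C(1+E[|X^1_0|^2]+A)+C\int_0^t\phi(u)\,du$, where $A$ controls $\frac1N\sum_l E[\int_0^T|\hat\alpha^l_u|^2du]$ and $\mu^{(0)}\in\cP_2(\mathbb{R}^\xdim)$ controls $E[|X^1_0|^2]$. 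Gronwall then gives $\phi\le C$ uniformly in $N$; feeding this back into the individual estimate and applying Gronwall once more yields $E[\sup_{t}|X^i_t|^2]\le C$ for each $i$, with $C$ independent of $i$ and $N$, and in particular $E[\frac1N\sum_k|X^{k,\underline{\hat\alpha}}_t|]\le C_t$.

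For the adjoint part I would regard $\underline Y^{i,\cdot}=(Y^{i,1},\dots,Y^{i,N})$, for fixed $i$, as an $\mathbb{R}^{\xdim N}$-valued process solving a BSDE driven by $(W^1,\dots,W^N)$, whose driver $D(t,\underline y)$ has $j$-th block $-\partial_{x^j}H^{N,i}(t,\underline X_t,\underline{\hat\alpha}_t,\underline y)$, written explicitly in \eqref{eq:adjoint eq full}, and whose terminal value is \eqref{eq:adjoint eq full terminal}. The key observations are: first, $\partial_xb$ is bounded (the Lipschitz-in-$x$ part of \ref{a1}) and $\partial_\mu b$ is bounded by \ref{a4}, so that, by a Cauchy--Schwarz estimate in the Euclidean norm of $\mathbb{R}^{\xdim N}$, the map $\underline y\mapsto D(t,\underline y)$ is Lipschitz with a constant that does not depend on $N$ --- here the coupling term $\frac1N\sum_l\partial_\mu b(\cdot)(X^j_t)\,y^l$ is the delicate one; second, $\partial_xf(t,\cdot)=\partial_xH(t,\cdot,0,\cdot,\cdot)$ has linear growth by \ref{a3} while $\partial_\mu f$ and $\partial_\mu g$ have the growth prescribed in \ref{a4}, so that by the forward bounds of the previous step and the control bound $A$ both $E[\int_0^T|D(t,0)|^2dt]$ and $E[|\underline Y^{i,\cdot}_T|^2]$ are bounded uniformly in $N$ (the off-diagonal terminal terms, carrying a $1/N$ prefactor as in \cite[Proposition 5.35]{MR3753660}, contribute only $O(1/N)$). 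Applying the standard a priori $L^2$-estimate for Lipschitz BSDEs (as in \cite{PP90}) to this $\mathbb{R}^{\xdim N}$-valued equation then yields $\sup_{t}E[|\underline Y^{i,\cdot}_t|^2]+E[\int_0^T\sum_{j,k}|Z^{i,j,k}_t|^2dt]\le C$, whence $E[|Y^{i,j}_t|^2]\le C$ and $E[\sum_k\int_0^T|Z^{i,j,k}_t|^2dt]\le C$ for every $i,j$, which together with Step 1 gives the claim.

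I expect the only real obstacle to be the uniform-in-$N$ bookkeeping just described: one must verify that the $1/N$ weights attached to the measure-derivative terms exactly offset the $N$ summands, so that neither the effective Lipschitz constant of the high-dimensional driver nor the $L^2$-norm of its data blows up. Everything else (the SDE and BSDE a priori estimates, the applications of Gronwall, Young, Cauchy--Schwarz) is routine.
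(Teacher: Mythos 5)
Your proposal is correct, and while the forward half coincides with the paper's argument (square the state equation, observe that the measure term is just the empirical second moment $\frac1N\sum_l(|X^l_u|^2+|\hat\alpha^l_u|^2)$, average over $i$, Gronwall with the uniform admissibility bound $A$, then feed the averaged bound back to control each $E[\sup_t|X^{i,\underline{\hat\alpha}}_t|^2]$), your treatment of the adjoint system takes a genuinely different route. The paper stays componentwise: it writes the scalar inequality \eqref{eq:inequa Y1}, which contains both $|Y^{i,j}_u|^2$ and the average $\frac1N\sum_k|Y^{i,k}_u|^2$ under the time integral, closes it by first averaging over $j$ and applying Gronwall to $\frac1N\sum_jE[|Y^{i,j}_t|^2]$, and then runs Gronwall a second time on the individual component. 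You instead view $\underline Y^{i,\cdot}$ as a single $\mathbb{R}^{\xdim N}$-valued Lipschitz BSDE and invoke the classical a priori estimate; the point you rightly single out, namely that the coupling matrix with blocks $\frac1N\partial_\mu b(\cdot)(X^{j,\underline{\hat\alpha}}_t)$ has Euclidean operator norm $O(1)$ uniformly in $N$ (each block is $O(1/N)$, Cauchy--Schwarz over the $N$ summands costs only $\sqrt N$, and this is absorbed when summing the squares of the $N$ output blocks), is exactly what keeps the effective Lipschitz constant dimension-free, and the same bookkeeping makes $E[\int_0^T|D(t,0)|^2dt]$ and $E[|\underline Y^{i,\cdot}_T|^2]$ uniform in $N$ since the measure-derivative data carry $1/N$ prefactors with only $N$ summands, as in \eqref{eq:adjoint eq full terminal}--\eqref{eq:adjoint eq full}. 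What your route buys is that the bound on $E[\sum_k\int_0^T|Z^{i,j,k}_t|^2\,dt]$, which is part of the statement, comes for free from the a priori estimate, whereas the paper's displayed proof only establishes the $Y$ bound and leaves the $Z$ estimate to the standard It\^o argument; what the paper's two-pass Gronwall buys is that it never needs the operator-norm bookkeeping and remains at the level of the scalar inequalities already used for the forward process.
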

\begin{proof}
	This follows from the fact that the functions $b$, $ \partial_xf$ and $\partial_\xi f$ are of linear growth, and the functions $\partial_xb$  and $\partial_\mu b,$ are bounded (see conditions \ref{a1} and \ref{a4}).
	In fact, recalling that the adjoint equation is given by \eqref{eq:adjoint eq full terminal}-\eqref{eq:adjoint eq full}, these properties imply
	\begin{align}
	\notag
		E\Big[\sup_{t\in [0,T]}|X^{i,\underline {\hat\alpha}}_t|^2\Big] &\le CE\bigg[ |x^i_0|^2+\int_0^T \Big( 1 + |X^{i,\underline{\hat\alpha}}_u|^2 + |Y^{i,i}_u|^2 + \frac1N\sum_{k=1}^N|X^{k,\underline {\hat\alpha}}_u|^2 + |Y^{k,k}_u|^2  \Big)\,du\bigg] \\\label{eq:inequa X1}
		&\quad+ E\Big[|\sigma|^2\sup_{t \in [0,T]}|W^i_t|^2  \Big],
	\end{align}
	notice that we also used the representation of $\hat\alpha^{i,N}$ as $\hat\alpha^{i,N}_t = \Lambda(t,X^{i,\underline {\hat\alpha}}_t, Y^{i,i}_t,L^{N}(\underline X^{ \underline{\hat\alpha}}_t), \zeta^{i,N}_t )$ given in \eqref{eq:def alpha star}, and the estimation \eqref{eq:estimate.for.zeta} of $\zeta^{i,N}$.
	Subsequently taking the average over $i$ above, the expectation, and then applying Gronwall's inequality leads to 
	\begin{align}
	\label{eq:bound.aver.x.lemma}
		E\bigg[\frac1N\sum_{k=1}^N|X^{k,\underline{\hat \alpha}}_t|^2 \bigg]& \le C\bigg(1+E[|x^i_0|^2] +E\bigg[\frac1N\sum_{k=1}^N\int_0^T|Y^{i,i}_u|\,du\bigg]+  |\sigma|^2E[|W^i_T|^2]\bigg).
	\end{align}

	Let us now turn to the bound of $Y^{i,j}$ and $Z^{i,j,k}$.
	By It\^o's formula applied to $|Y^{i,i}|^2$, linear growth of $\partial_xf$ and $\partial_\mu f$ and boundedness of $\partial_xb$ and $\partial_\mu b$ we have
	\begin{align}
		\label{eq:inequa Y1}
		&E \bigg[ |Y^{i,i}_t|^2 + \sum_{k=1}^N\int_t^T|Z^{i,i,k}_u|^2\,du  \bigg]
		\le 
		C\bigg( 1 + E\Big[ |X^{i,\underline {\hat\alpha}}_T|^2 \Big] + \frac{1}{N} \sum_{k=1}^N  E\Big[ |X^{k,\underline {\hat\alpha}}_T|^2 \Big] \bigg)
		\\
		\notag
		&\quad +CE\bigg[ \int_t^T  |X^{i,\underline {\hat\alpha}}_u|^2 + \frac1N\sum_{k=1}^N|X^{k,\underline {\hat\alpha}}_u|^2   \,du \bigg]\\\notag
		&\quad + CE\bigg[\int_t^T3|Y^{i,i}_u|^2 + \frac1N\sum_{k=1}^N|Y^{k,k}_u|^2\,du \bigg].
	\end{align}
	Averaging out and using \eqref{eq:bound.aver.x.lemma}, it follows that when $T$ is small enough we have $\frac1N\sum_{j=1}^NE[\int_0^T|Y_t^{j,j}|^2]\,dt]<\infty$.
	Therefore, plugging this back in \eqref{eq:inequa Y1} and \eqref{eq:inequa X1} yields the result.
	We thus
	 arrive at the claimed bound for $Y^{i,j}$ and $Z^{i,j,k}$.

	The case where \ref{M} is satisfied for $K_b$ large enough follows exactly as in the proof of Lemma \ref{lem: moment d+5 system}.
	We omit the proof to avoid repetitions.
\end{proof}

\begin{lemma}
\label{prop:cv-Yij-0}
	If the conditions \ref{a1}-\ref{a5} are satisfied, then for every $i, j$ such that $i\neq j$, and every $t \in [0,T]$, we have
	\begin{equation*}
		E\Big[|Y^{i,j}_t |^2\Big]\le CN^{-1}\quad \text{for every $N\ge 1$ and some $C>0$}.
	\end{equation*}
\end{lemma}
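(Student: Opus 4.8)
The plan is to exploit that, for $i\neq j$, the Kronecker factor $\delta_{i,j}$ in both the terminal condition and the driver of the adjoint equation \eqref{eq:adjoint eq full terminal}--\eqref{eq:adjoint eq full} vanishes, so that (writing $X^k:=X^{k,\underline{\hat\alpha}}$ and $\underline X:=\underline X^{\underline{\hat\alpha}}$ for brevity) $Y^{i,j}$ solves the \emph{linear} BSDE
\[
	Y^{i,j}_t = \frac1N\,\partial_\mu g\big(X^{i}_T, L^N(\underline X_T)\big)(X^{j}_T) + \int_t^T\psi^{i,j}_u\,du - \sum_{k=1}^N\int_t^TZ^{i,j,k}_u\,dW^k_u,
\]
where, using that $\partial_xb$ and $\partial_\mu b$ are bounded by \ref{a1} and \ref{a4}, the driver obeys
\[
	|\psi^{i,j}_u| \le C|Y^{i,j}_u| + \frac{C}{N}\,\big|\partial_\mu f\big(u,X^{i}_u,\hat\alpha^i_u,L^N(\underline X_u,\underline{\hat\alpha}_u)\big)(X^{j}_u)\big| + \frac{C}{N}\sum_{k=1}^N|Y^{i,k}_u|.
\]
The key point is that every term on the right is of order $1/N$ except the genuinely linear term $C|Y^{i,j}_u|$ coming from $\partial_xb$, which will be absorbed by a Gronwall argument.

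First I would record the order-$1/N$ bounds. By \ref{a4} together with Lemma \ref{lem:integrability} (which gives $E[\sup_t|X^{i}_t|^2]\le C$ and $E[\tfrac1N\sum_k|X^{k}_t|^2]\le C$), the terminal condition satisfies $E[|Y^{i,j}_T|^2]\le C/N^2$; and, using in addition the admissibility bound $E[\int_0^T|\hat\alpha^k_u|^2\,du]<A$ for each $k$, one gets $\tfrac1{N^2}\int_0^TE\big[|\partial_\mu f(\,\cdot\,)(X^{j}_u)|^2\big]\,du\le C/N^2$. Since the driver does not depend on $Z^{i,j,\cdot}$, we have the representation $Y^{i,j}_t = E[\,Y^{i,j}_T + \int_t^T\psi^{i,j}_u\,du\mid\mathcal F^N_t\,]$, so Doob's $L^2$ maximal inequality and the Cauchy--Schwarz inequality yield, for every $t\in[0,T]$,
\[
	E\Big[\sup_{s\in[t,T]}|Y^{i,j}_s|^2\Big] \le C\,E[|Y^{i,j}_T|^2] + C\int_t^TE[|\psi^{i,j}_u|^2]\,du.
\]
In the last integral I would split the sum $\tfrac1N\sum_{k=1}^N|Y^{i,k}_u|$ into the diagonal term $k=i$, whose square is $O(1/N^2)$ by Lemma \ref{lem:integrability}, and the off-diagonal part, obtaining $E[|\psi^{i,j}_u|^2]\le C\,E[|Y^{i,j}_u|^2] + \tfrac{C}{N^2} + \tfrac{C}{N}\sum_{k\neq i}E[|Y^{i,k}_u|^2]$.

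To close the estimate I would use exchangeability of the off-diagonal family: for any permutation of $\{1,\dots,N\}$ fixing $i$, relabelling the data $(x^k_0,W^k,\hat\alpha^k)$ leaves the law of the particle system \eqref{eq:N-SDE} unchanged and permutes $(Y^{i,k})_{k\neq i}$ accordingly, so $\mathcal L(Y^{i,k})$ is the same for all $k\neq i$. Setting $\rho(t):=\max_{k\neq i}E[\sup_{s\in[t,T]}|Y^{i,k}_s|^2]$ — which is finite since the vector $(Y^{i,1},\dots,Y^{i,N})$ lies in $\mathcal S^2$ by well-posedness of \eqref{eq:adjoint-fctN} — the bounds above give $\tfrac1N\sum_{k\neq i}E[|Y^{i,k}_u|^2]\le\rho(u)$ and $E[|Y^{i,j}_u|^2]\le\rho(u)$, hence $\rho(t)\le C/N + C\int_t^T\rho(u)\,du$. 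A backward Gronwall inequality then gives $\rho(t)\le CN^{-1}e^{C(T-t)}$, and in particular $E[\sup_{t\in[0,T]}|Y^{i,j}_t|^2]\le\rho(0)\le CN^{-1}$ for every $i\neq j$, as claimed.

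I expect the only real obstacle to be the coupling term $\tfrac1N\sum_k\partial_\mu b(\,\cdot\,)(X^{j}_u)Y^{i,k}_u$, which prevents treating each $Y^{i,j}$ in isolation: exchangeability of the off-diagonal family reduces it to a scalar Gronwall inequality, while the potentially dangerous diagonal contribution $Y^{i,i}$ is kept under control precisely by the a priori $L^2$-bound of Lemma \ref{lem:integrability}, where it enters with the small prefactor $1/N$.
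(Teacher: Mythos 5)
Your proof is correct, and at its core it uses the same mechanism as the paper's: for $i\neq j$ the Kronecker terms drop out, so $Y^{i,j}$ solves a linear BSDE whose terminal datum and $\partial_\mu f$-inhomogeneity are of size $1/N$, whose driver is $Z$-free (hence the conditional-expectation representation plus Doob), whose coefficients $\partial_x b,\partial_\mu b$ are bounded by \ref{a1}, \ref{a4}, and whose dangerous diagonal contribution $\tfrac1N Y^{i,i}$ is tamed by the a priori bound of Lemma \ref{lem:integrability}; a Gronwall argument then closes the loop. The difference is purely in the packaging of the coupling term $\tfrac1N\sum_m\partial_\mu b(\cdot)(X^j)Y^{i,m}$: the paper stacks all off-diagonal entries into the vector $Y^{-1}\in\mathbb{R}^{N-1}$ and runs Gronwall on its Euclidean norm, getting $E[\sup_t|Y^{-1}_t|_2^2]\le C/N$ and reading off each component, whereas you run a scalar Gronwall on $\rho(t)=\max_{k\neq i}E[\sup_{s\in[t,T]}|Y^{i,k}_s|^2]$, using Cauchy--Schwarz in the form $\tfrac1{N^2}\big(\sum_{k\neq i}|Y^{i,k}|\big)^2\le\tfrac1N\sum_{k\neq i}|Y^{i,k}|^2\le\rho$. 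Your route is slightly finer: since the inhomogeneity per component is $O(N^{-2})$, your recursion actually yields $E[\sup_t|Y^{i,j}_t|^2]\le CN^{-2}$ (you only claim $CN^{-1}$, which of course suffices), while the vector-norm argument caps at $CN^{-1}$ per component because $|Y^{-1}_T|_2^2$ aggregates $N-1$ terms of size $N^{-2}$. One small caveat: the exchangeability digression is both unnecessary and, as stated, not justified — the Nash profile $\underline{\hat\alpha}$ is not assumed symmetric and each $\hat\alpha^k$ is adapted to the full filtration generated by all the Brownian motions, so permuting $(x^k_0,W^k,\hat\alpha^k)$ need not leave the system's law invariant; fortunately your Gronwall is run on the maximum over $k\neq i$ (with constants uniform in $j$ and $N$), so equidistribution of the $Y^{i,k}$ is never actually used and the argument stands without it.
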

\begin{proof}
	Let $i$ be fixed. 
	For every $j$ such that $i\neq j$, the process $Y^{i,j}$ satisfies the equation
	\begin{align*}
		dY^{i,j}_t&= - \left( \frac{1}{N} \partial_\mu f\left(t,X^{i,\underline {\hat\alpha}}_t, \hat\alpha^i_t, L^{N}(\underline X_t^{\underline{\hat\alpha}},\underline{\hat\alpha}_t)  \right)(X^{j,\underline {\hat\alpha}}_t) \right) dt
	\\
	&
	\quad - \bigg( \partial_{x} b\Big(t,X^{j,\underline {\hat\alpha}}_t, \hat\alpha^j_t, L^{N}(\underline X_t^{\underline{\hat\alpha}},\underline{\hat\alpha}_t) \Big) Y^{i,j}_t\\
	&\qquad
	+\sum_{k=1}^N \frac{1}{N} \partial_\mu b\Big(t,X^{k,\underline {\hat\alpha}}_t, \hat\alpha^k_t, L^{N}(\underline X_t^{\underline{\hat\alpha}},\underline{\hat\alpha}_t)   \Big)(X^{j,\underline{\hat \alpha}}_t) Y^{i,k}_t  \bigg)\, dt  + \sum_{k=1}^N Z^{i,j,k}_t dW^{k}_t
	\end{align*}
	with
	\begin{equation}
	\label{eq:termin ij}
	Y^{i,j}_T 
	=  \frac{1}{N} \partial_\mu g\left(X^{i,\underline{\hat\alpha}}_T, L^{N}(\underline X^{\underline{\hat\alpha}}_T)  \right) (X^{j,\underline{\hat\alpha}}_T).
	\end{equation}
	We assume for simplicity that $i=1$, and in an effort to write the equations in a more compact form, we define the vectors
	\begin{equation*}
		Y^{-1}:= (Y^{1,2},\dots, Y^{1,N}), \quad A_t:= \left(\partial_\mu f\left(t,X^{1,\underline {\hat\alpha}}_t, \hat\alpha^1_t, L^{N}(\underline X_t^{\underline{\hat\alpha}},\underline{\hat\alpha}_t)  \right)(X^{j,\underline {\hat\alpha}}_t) \right)_{j=2,\dots, N},
	\end{equation*}
	as well as
	\begin{equation*}
		B_t := \Big(\partial_\mu b\left(t,X^{1,\underline {\hat\alpha}}_t, \hat\alpha^1_t, L^{N}(\underline X_t^{\underline{\hat\alpha}},\underline{\hat\alpha}_t)   \right)(X^{j,\underline{\hat\alpha}}_t)\Big)_{j=2,\dots, N},
	\end{equation*}
	and the matrices
	\begin{equation*} 
		C_t:= \left( \partial_\mu b\left(t,X^{m,\underline {\hat\alpha}}_t, \hat\alpha^m_t, L^{N}(\underline X_t^{\underline{\hat\alpha}},\underline{\hat\alpha}_t)   \right)(X^{j,\underline {\hat\alpha}}_t)\right)_{m,j=2,\dots, N}		
		\end{equation*}
		and
		\begin{equation*}
		D_t:= \mathrm{diag}\left(\partial_{x} b\left(t,X^{j,\underline {\hat\alpha}}_t, \hat\alpha^j_t, L^{N}(\underline X_t^{\underline{\hat\alpha}},\underline{\hat\alpha}_t) \right) \right)_{j=2,\dots, N}.
	\end{equation*}
	With this new set of notation, the vector $Y^{-1}$ satisfies the multidimensional BSDE
	\begin{align*}
		dY^{-1}_t = -\left( \frac 1N (A_t +B_tY^{1,1}_t) + \frac1N  C_tY^{-1}_t + D_tY^{-1}_t  \right)\,dt + \sum_{k=1}^NZ^{1,-1, k}_t\,dW^k_t
	\end{align*}
	with terminal condition \eqref{eq:termin ij} and
	with $Z^{1, -1, k}:= (Z^{1,2, k},\dots, Z^{1,N,k})$.
	Thus, by square integrability of  $Z^{1, -1, k}$, it follows that
	\begin{align*}
		Y_t^{-1} = E\left[ Y^{-1}_T + \int_t^T\left( \frac 1N (A_s +B_sY^{1,1}_s) + \frac1N C_s Y^{-1}_s + D_sY^{-1}_s  \right)\,ds \mid \mathcal{F}_t^N \right].
	\end{align*}
	Denoting by $|\cdot|_2$ the Euclidean norm on $(\mathbb{R}^\xdim)^{N-1}$, we obtain
	\begin{align*}
		|Y^{-1}_t|^2_2 &\le 2(T+1)E\bigg[|Y^{-1}_T|_2^2 + \int_t^T\frac{1}{N^2} \big(|A_s|^2_2 + |B_s|_2^2|Y^{1,1}_s|^2\big)\\
		&\qquad  + \frac{1}{N^2}|Y^{-1}_s|_2^2|C_s|^2_2 + L_f^2|Y^{-1}_s|_2^2\,ds \mid \mathcal{F}_t^N\bigg],
	\end{align*}
	where we used definition of $D$ and the fact that $\partial_xb$ is bounded by $L_f$.
	Therefore, it follows by Gronwall's inequality that
	\begin{align}
	\label{eq:conv i diff j 1}
		|Y^{-1}_t|_2^2 \le CE\left[|Y^{-1}_T|^2_2 + \int_t^T\frac{1}{N^2} \big(|A_s|^2_2 + |B_s|_2^2|Y^{1,1}_s|^2\big)\,ds   \mid \mathcal{F}_t^N\right]
	\end{align}
	for a constant $C$ depending only on $T$ and the bound $L_f$ of $\partial_xb$ and $\partial_\mu b$, but not on $N$.
	In fact, since $\partial_\mu b$ is bounded by $L_f$, it follows that $|C_t|_2^2\le NL_f^2$.
	Moreover, since $\partial_\mu f$ is of linear growth (see assumption \ref{a4}), and $\hat{\alpha}^1$ is bounded in $\mathcal{H}^2(\mathbb{R}^m)$, it follows by Lemma \ref{lem:integrability} that the process $(A_t)$ is bounded in $\mathcal{H}^2(\mathbb{R})$ uniformly in $N$.	
	That is, it satisfies $\sup_NE\big[\int_0^T|A_t|^2\,dt\big]<\infty$.
	Since $\partial_\mu b$ is bounded by $L_f$ and by Lemma \ref{lem:integrability} $Y^{1,1}_t$ is bounded in $L^2$, it follows by Fubini's theorem that $E[\int_0^T|B_s|^2_2|Y_s^{1,1}|^2\,ds] \le NC$ for some constant $C>0$.
	In addition, it follows again by Lemma \ref{lem:integrability} that
	\begin{align*}
		E\big[|Y^{-1}_T|^2_2\big] &\le \frac{1}{N^2}E\Big[\sum_{j=2}^N|\partial_\mu g(X^{1,\underline{\hat\alpha}}_T, L^{N}(\underline X_T^{\underline{\hat\alpha}}) )(X^{j,\underline{\hat\alpha}}_T)|^2 \Big]\\
		&\le C\frac{N-1}{N^2}E\Big[|X^{1,\underline{\hat\alpha}}_T|^2 + \frac{1}{N}\sum_{k=1}^N|X_T^{k, \underline{\hat\alpha}}|^2 + 1\Big] +\frac{C}{N^2}E\Big[\sum_{j=2}^N|X_T^{j, \underline{\hat\alpha}}|^2\Big]\le \frac{C}{N}
	\end{align*}
	where the last inequality follows by Lemma \ref{lem:integrability}.
	Combine this with \eqref{eq:conv i diff j 1}, to obtain
	\begin{align*}
		E\Big[|Y^{-1}_t|^2_2 \Big] 
			&\le  C/N
	\end{align*}
	for some constant $C$ depending only on $T$, the bounds of $A, B$ and the second moment of $Y^{1,1}_s$ (which is bounded uniformly in $N$).
	Therefore, we have
	\begin{align*}
		E\Big[|Y^{1,j}_t|^2\Big]\le E\Big[|Y^{-1}_t|^2_2\Big]\le C\frac{1}{N}
	\end{align*}
	for some constant $C>0$ and for all $j=2,\dots, N$.
\end{proof}
Let us give a representation of the minimizer of the Hamiltonian.
\begin{lemma}
\label{lem:Lambda lipschitz}
	Assume that condition \ref{a2} holds. 
	Let $\Lambda :[0,T]\times \mathbb{R}^\xdim\times \mathbb{R}^m\times\mathcal{P}_2(\mathbb{R}^\xdim)\times \mathbb{R}^m\to \mathbb{R}^m$ be such that
	\begin{equation}
	\label{eq:lips alpha}
		\partial_af_1\big(t,x,\Lambda(t,x,y,\mu,\chi),\mu\big) + \partial_a b_1\big(t,x,\Lambda(t,x,y,\mu,\chi),\mu\big)y = \chi.
	\end{equation}
	Then $\Lambda$ minimizes the Hamiltonian $H$, is Lipschitz--continuous in $(x,y,\mu,\chi)$ with Lipschitz constant $L_\Lambda = \frac{L_f}{2\gamma}$ and satisfies the linear growth property
	\begin{equation}
	\label{eq:lin growth alpha}
	|\Lambda(t, x, y, \mu, \chi)|\le C\Big(1+ |x| + |y| + |\chi| + \big(\int_{\mathbb{R}^\xdim} |v|^2\mu(dv)\big)^{1/2} \Big).
	\end{equation}
\end{lemma}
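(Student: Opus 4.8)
The plan is to identify $\Lambda$ with the unique solution of the Euler equation of a strongly convex minimisation problem and to read off all three assertions from the strong monotonicity of the map $a\mapsto \partial_a H(t,x,y,a,\xi)$. First I would record two consequences of \ref{a2}--\ref{a3}. From the decomposition \ref{a2} one has $\partial_a f(t,x,a,\xi)=\partial_a f_1(t,x,a,\mu)$ and $\partial_a b(t,x,a,\xi)=\partial_a b_1(t,x,a,\mu)$, so that
\[
	\partial_a H(t,x,y,a,\xi)=\partial_a f_1(t,x,a,\mu)+\partial_a b_1(t,x,a,\mu)y
\]
depends on the measure argument only through its first marginal $\mu$; this is exactly why \eqref{eq:lips alpha} is a meaningful definition. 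From the strong convexity of $a\mapsto H(t,x,y,a,\xi)$ in \ref{a3} one obtains the strong monotonicity estimate
\[
	(a-a')\big(\partial_a H(t,x,y,a,\xi)-\partial_a H(t,x,y,a',\xi)\big)\ge 2\gamma|a-a'|^2 ,
\]
i.e. $a\mapsto \partial_a H(t,x,y,a,\xi)$ is $2\gamma$-strongly monotone. Being in addition continuous (by continuous differentiability of $b_1,f_1$), it is a homeomorphism of $\mathbb{R}^m$ onto itself; equivalently, for each $(t,x,y,\mu,\chi)$ the strongly convex function $a\mapsto H(t,x,y,a,\xi)-a\chi$ has a unique critical point, which is its global minimiser, and this critical point is the solution of \eqref{eq:lips alpha}. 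I would define $\Lambda(t,x,y,\mu,\chi)$ to be this point; it is then $\mathbb{R}^m$-valued and minimises $a\mapsto H(t,x,y,a,\xi)-a\chi$ (in particular $\Lambda(t,x,y,\mu,0)$ minimises the Hamiltonian, and $\Lambda$ coincides with $\argmin_{a\in\mathbb{A}}H(t,x,y,a,\xi)$ whenever it lies in $\mathbb{A}$).

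For the Lipschitz bound I would fix $(t,x,y,\mu,\chi)$ and $(t,x',y',\mu',\chi')$, put $a:=\Lambda(t,x,y,\mu,\chi)$ and $a':=\Lambda(t,x',y',\mu',\chi')$, and start from strong monotonicity, inserting and subtracting $\partial_a H(t,x',y',a',\xi')$:
\begin{align*}
	2\gamma|a-a'|^2
	&\le (a-a')\big(\partial_a H(t,x,y,a,\xi)-\partial_a H(t,x',y',a',\xi')\big)\\
	&\quad +(a-a')\big(\partial_a H(t,x',y',a',\xi')-\partial_a H(t,x,y,a',\xi)\big)\\
	&= (a-a')(\chi-\chi')+(a-a')\big(\partial_a H(t,x',y',a',\xi')-\partial_a H(t,x,y,a',\xi)\big),
\end{align*}
using \eqref{eq:lips alpha} at both points in the last line. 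Bounding the remaining difference by the Lipschitz continuity of $\partial_a H$ asserted in \ref{a3} (which, as noted above, only involves $\mu$) and dividing through by $|a-a'|$ yields
\[
	2\gamma|a-a'|\le |\chi-\chi'|+L_H\big(|x-x'|+|y-y'|+\mathcal{W}_2(\mu,\mu')\big),
\]
with $L_H$ the Lipschitz constant of $\partial_a H$ from \ref{a3}; this is the claimed Lipschitz estimate.

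The linear growth property \eqref{eq:lin growth alpha} then comes out by applying the Lipschitz bound with the second argument frozen at a fixed reference, say $(t,0,0,\delta_0,0)$. Here $\Lambda(t,0,0,\delta_0,0)$ is bounded uniformly in $t$: strong monotonicity tested against the constant $0$ gives $2\gamma|\Lambda(t,0,0,\delta_0,0)|\le |\partial_a H(t,0,0,0,\delta_0)|$, which is bounded by the linear growth of $\partial_a H$ in \ref{a3}. Since $\mathcal{W}_2(\mu,\delta_0)=\big(\int_{\mathbb{R}^{\xdim}}|v|^2\,\mu(dv)\big)^{1/2}$, the triangle inequality produces
\[
	|\Lambda(t,x,y,\mu,\chi)|\le C\Big(1+|x|+|y|+|\chi|+\big(\textstyle\int_{\mathbb{R}^{\xdim}}|v|^2\,\mu(dv)\big)^{1/2}\Big)
\]
for a constant $C$ depending only on the constants in \ref{a3}.

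The step I expect to be the main obstacle is the first one, i.e. making the definition of $\Lambda$ watertight: one must argue surjectivity (coercivity), not merely injectivity, of $a\mapsto \partial_a H(t,x,y,a,\xi)$ on $\mathbb{R}^m$, and be precise about the sense in which $\Lambda$ minimises the Hamiltonian -- in particular how the unconstrained Euler equation \eqref{eq:lips alpha} interacts with minimisation over the closed convex set $\mathbb{A}$, and which value of $\chi$ is the relevant one in the later application. Once this monotonicity/convexity framework is in place, the Lipschitz and linear growth estimates are routine.
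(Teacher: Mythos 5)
Your proof is correct and takes essentially the same route as the paper: the strong convexity in \ref{a3} (equivalently the $2\gamma$-strong monotonicity of $a\mapsto\partial_a H$ that you derive from it, which is just the sum of the two convexity inequalities the paper writes down), combined with the first-order condition \eqref{eq:lips alpha} and the Lipschitz continuity and linear growth of $\partial_a H$, gives the minimality, Lipschitz and growth claims exactly as in the paper's argument. The surjectivity/existence issue you flag is not actually needed, since the lemma takes as hypothesis a $\Lambda$ satisfying \eqref{eq:lips alpha}; and, like you, the paper treats the minimisation as unconstrained over $\mathbb{R}^m$ rather than over $\mathbb{A}$.
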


\begin{proof}
	This lemma is probably well known but since we could not find a suitable reference, we provide the proof here for the sake of completeness.
	By convexity and differentiability of the Hamiltonian  (see \ref{a3}), a vector $\alpha = \Lambda(t, x,y,\mu,\chi)\in \mathbb{R}^m$ satisfying \eqref{eq:lips alpha} minimizes the function $\tilde H_1(t,x, a, \mu, y) := f_1(t, x, a, \mu) +b_1(t, x, a, \mu)y - \chi a$ in $a$.

	Let us show that $\Lambda$ is Lipschitz continuous.
	Let $(x,y,\mu,\chi),(x',y',\mu',\chi')$ be fixed put $\alpha' := \Lambda(t,x', y', \mu', \chi')$ and assume without loss of generality that $\alpha \neq \alpha^\prime$.
	By the condition \ref{a3}, (letting $\mu$, $\mu'$ be the first marginals of $\xi$ and $\xi'$ respectively) we have 
	\begin{align*}
		\gamma|\alpha - \alpha'|^2 &\le H(t, x, y, \alpha,\xi) - H(t, x, y, \alpha',\xi) - (\alpha - \alpha')\partial_aH(t, x, y, \alpha,\xi)\\
		&= \tilde H_1(t, x, y, \alpha,\mu) - \tilde H_1(t, x, y, \alpha',\mu) - (\alpha - \alpha')\partial_a\tilde H_1(t, x, y, \alpha,\mu)
	\end{align*}
	and
	\begin{align*}
		\gamma|\alpha - \alpha'|^2 \le \tilde H_1(t, x', y', \alpha',\mu') -\tilde  H_1(t, x', y', \alpha,\mu') - (\alpha' - \alpha)\partial_a\tilde H_1(t, x', y', \alpha',\mu').
	\end{align*}
	Summing up these two inequalities yields
	\begin{align*}
		2\gamma|\alpha - \alpha'|^2 &\le \int_0^1\partial_a\tilde H_1(t, x, y, u\alpha + (1-u)\alpha',\mu)\,du(\alpha-\alpha')\\
		&\quad  + \int_0^1\partial_a\tilde H_1(t, x', y', u\alpha + (1-u)\alpha',\mu')\,du(\alpha-\alpha')\\
		&\quad - (\alpha - \alpha')\big(\partial_a\tilde H_1(t, x, y, \alpha,\mu) - \partial_a\tilde H_1(t, x', y', \alpha', \mu') \big) \\
		&\le L_f|\alpha - \alpha'|\big( |x - x'| + |y-y'|+ \mathcal{W}_2(\mu, \mu') \big)
	\end{align*}
	for some constant $C>0$
	where we used Lipschitz continuity of $\partial_a\tilde H_1 = \partial_aH$ assumed in \ref{a3}.
	Therefore, we get
	\begin{equation*}
		|\alpha - \alpha'| \le C\big( |x - x'| + |y - y'| + |\chi - \chi'|+ \mathcal{W}_2(\mu, \mu') \big),
	\end{equation*}
	which shows that $\Lambda$ is Lipschitz continuous, therefore measurable.

	It remains to show the growth property.
	Assume without loss of generality that $\alpha \neq 0$.
	Using again \ref{a3}, we have
	\begin{align*}
		\gamma|\alpha|^2 &\le  H(t, x, y, 0, \xi) - H(t, x, y, \alpha, \xi) + \alpha \partial_aH(t, x, y, 0, \xi)\\
		&\le L_f|\alpha| + C|\alpha|\Big(1 + |x| + |y| + \big(\int |v|^2\mu(dv)\big)^{1/2} \Big)\\ 
		&\le C|\alpha|\Big( 1+|x| + |y| + \big(\int |v|^2\mu(dv)\big)^{1/2} \Big)
	\end{align*}
	for some constant $C$ where we used the linear growth condition on $\partial_aH$.
	Therefore, we have \eqref{eq:lin growth alpha}.
\end{proof}
We now come to the proof of the main result of the paper:
\begin{proof}(of Theorem \ref{thm:main limit})
	Let $\underline {\hat\alpha}$ be a Nash equilibrium for the $N$-player game. 
	By Theorem \ref{thm:N pontryagin}, the process $\underline{\hat\alpha}$ satisfies
	$\partial_{\alpha^i}H^{N, i}(t,\underline X_t, \underline {\hat\alpha}_t, \underline Y^{i,\cdot}_t) = 0$ for every $i = 1, \dots, N$.
	Unpacking this condition gives 	
	\begin{align}
	\notag
		&\partial_\alpha f_1\left(t, X_t^{i,\underline{\hat\alpha}}, \hat\alpha^i_t, L^{N}(\underline X^{\underline{\hat\alpha}}_t) \right) + \partial_\alpha b_1(t,X_t^{i, \underline{\hat\alpha}},\hat\alpha_t^i,L^{N}(\underline X^{ \underline{\hat\alpha}}_t))Y^{i,i}_t\\
		&\quad \frac1N\partial_\nu f_2(t,X_t^{i,\underline{\hat\alpha}}, L^{N}(\underline X^{\underline{\hat\alpha}}_t,\underline{\hat\alpha}_t) )(\hat\alpha^i_t) +\frac1N\sum_{k=1}^N\partial_\nu b_2(t,X_t^{i,\underline{\hat\alpha}}, L^{N}(\underline X^{\underline{\hat\alpha}}_t,\underline{\hat\alpha}_t) )(\hat\alpha^k_t)Y^{i,k}_t = 0. 
		\label{eq:first order}
	\end{align}
	This is due to the decompositions $b= b_1 + b_2$ and $f = f_1 + f_2$ and the fact that the functions $b_2$ and $f_2$ do not depend on $\hat\alpha^i$.
	By Lemma \ref{lem:Lambda lipschitz}, there is a Lipschitz continuous function $\Lambda$ such that
	\begin{equation}
	\label{eq:def alpha star}
		\hat\alpha^i_t = \Lambda\Big(t,X^{i,\underline {\hat\alpha}}_t, Y^{i,i}_t,L^{N}(\underline X^{ \underline{\hat\alpha}}_t), \zeta^N_t \Big)
	\end{equation}
	whereby
	\begin{equation*}
		\zeta^{i,N}_t := -\frac1N\partial_\nu f_2(t,X_t^{i,\underline{\hat\alpha}}, L^{N}(\underline X^{\underline{\hat\alpha}}_t,\underline{\hat\alpha}_t) )(\hat\alpha^i_t) -\frac1N\sum_{k=1}^N\partial_\nu b_2(t,X_t^{i,\underline{\hat\alpha}}, L^{N}(\underline X^{\underline{\hat\alpha}}_t,\underline{\hat\alpha}_t) )(\hat\alpha^k_t)Y^{i,k}_t
	\end{equation*}
	and $\Lambda$ not depending on $N$ and $i,j$ but only depending on $\partial_\alpha f_1$ and $\partial_\alpha b_1$.
	This shows that when $\underline{\hat\alpha}$ is a Nash equilibrium, then the optimal state $X^i \equiv X^{i, \underline {\hat\alpha}}$ along with the processes $(Y^{i,j},Z^{i,j,k})$ satisfy the fully coupled system of FBSDEs (recall \eqref{eq:adjoint-fctN})
	\begin{equation*}
	\begin{cases}
		dX^{i,\underline{\hat\alpha}}_t = b(t, X^{i, \underline{\hat\alpha}}_t,\hat\alpha_t^{i},L^N(\underline{X}^{\underline{\hat\alpha}}_t,\underline{\hat\alpha}_t) )\,dt +\sigma\,dW_t^i\\
		d Y^{i,i}_t = -\Big\{\partial_xf(t, X^{i, \underline{\hat\alpha}}_t, \hat\alpha^i_t, L^N(\underline{X}^{\underline{\hat\alpha}}_t,\underline{\hat\alpha}_t )) + \partial_xb(t, X^{i, \underline{\hat\alpha}}_t, \hat\alpha^i_t, L^N(\underline{X}^{\underline{\hat\alpha}}_t,\underline{\hat\alpha}_t ))Y^{i,i}_t+ \epsilon^N_t \Big\}\,dt\\
		\qquad + \sum_{k=1}^N Z^{i,j,k}_t dW^{k}_t   \\ 
		X^{i,\underline{\hat\alpha}}_0\sim \mu^{(0)},\,		\hat\alpha^i_t=\Lambda\Big(t,X^{i,\underline {\hat\alpha}}_t, Y^{i,i}_t,L^{N}(\underline X^{ \underline{\hat\alpha}}_t), \zeta^N_t \Big),\, Y^{i,i}_T = \partial_xg(X^{i, \underline{\hat\alpha}}_T, L^N(\underline{X}^{\underline{\hat\alpha}}_T)) +\gamma^N
	\end{cases}
	\end{equation*}
	with 
	\begin{align*}
		\varepsilon^{i,N}_t &:= \frac1N\partial_\mu f(t, X^{i, \underline{\hat\alpha}}_t, \hat\alpha^i_t, L^N(\underline{X}^{\underline{\hat\alpha}}_t,\underline{\hat\alpha}_t ))(X^{i, \underline{\hat\alpha}}_t)\\
		&\qquad + \frac1N\sum_{j=1}^N\partial_\mu b(t, X^{j, \underline{\hat\alpha}}_t, \hat\alpha^j_t, L^N(\underline{X}^{\underline{\hat\alpha}}_t,\underline{\hat\alpha}_t ))(X^{i, \underline{\hat\alpha}}_t)Y^{i,j}_t
	\end{align*}
	and
	\begin{equation*}
		\gamma^{i,N} := \frac1N\partial_\mu g(X^{i, \underline{\hat\alpha}}_T, L^N(\underline{X}^{\underline{\hat\alpha}}_T))(X^{i, \underline{\hat\alpha}}_T).
	\end{equation*}
	Unfortunately, we cannot directly apply the propagation of chaos results for FBSDE developed in the previous section to the above equation.
	For this reason, we introduce the following auxiliary equation:
	\begin{equation}
	\label{eq:aux fbsde 1}
	\begin{cases}
		d\widetilde X^{i,N}_t = b\Big(t, \widetilde X^{i,N}_t,\widetilde\alpha_t^{i,N},L^N(\underline{\widetilde X}_t,\underline{\widetilde\alpha}_t) \Big)\,dt +\sigma\,dW_t^i\\
		d\widetilde Y^{i,N}_t = - \Big\{\partial_x f\left(t, \widetilde X_t^{i,N}, \widetilde\alpha^{i,N}_t, L^{N}(\underline {\widetilde X}_t) \right) + \partial_x b\Big(t,\widetilde X_t^{i,N},\widetilde \alpha_t^{i,N},L^{N}(\underline{\widetilde X}_t) \Big)\widetilde Y^{i}_t \Big\} dt\\
		\qquad + \sum_{k=1}^N \widetilde Z^{i,k,N}_t dW^{k}_t,\\
		\widetilde X^{i,N}_0\sim \mu^{(0)},\, \widetilde Y^{i,N}_T = \partial_{x^i}g(\widetilde X^{i,N}_T,L^N(\underline{\widetilde X}_T)),\,	\widetilde\alpha^{i,N}_t=\Lambda\Big(t,\widetilde X^{i,N}_t, \widetilde Y^{i,N}_t,L^{N}(\underline{\widetilde X}_t), 0 \Big)
	\end{cases}
	\end{equation}
	and further define the function $ \varphi :[0,T]\times\cP_2(\RR^\xdim\times \RR^\xdim) \to \cP_2(\RR^\xdim\times \RR^m)$ given by
	$$
		\varphi (t,\xi) := \xi\circ \big(id_\xdim, \Lambda(t, \cdot, \cdot, \mu,0) \big)^{-1}
	$$
	where $id_\xdim$ is the projection on $\RR^\xdim$ and $\mu$ is the first marginal of the probability measure $\xi$, so that $(id_\xdim, \Lambda(t, \cdot, \cdot, \mu,0))$ maps $ \RR^\ell \times \RR^\ell$ to $\RR^\ell \times \RR^m$.
	Then, equation \eqref{eq:aux fbsde 1} can be re-written as
			\begin{equation}
		\label{eq:auxiliary fbsde}
		\begin{cases}
			d\widetilde X^{i,N}_t =  B\Big(t, \widetilde X^{i,N}_t, \widetilde Y^{i,N}_t, L^{N}(\underline{\widetilde  X}_t, \underline{\widetilde  Y}_t) \Big)\,dt + \sigma\,dW^i_t\\
			d\widetilde Y^{i,N}_t = -   F\Big(t, \widetilde X^{i,N}_t, \widetilde Y^{i,N}_t, L^{N}(\underline{\widetilde  X}_t,\underline{\widetilde Y}_t) \Big)\,dt  + \sum_{k=1}^N\widetilde Z^{i,k,N}_t\,dW^k_t\\
			\widetilde X^{i,N}_0\sim \mu^{(0)}, \,\,\, \widetilde Y^{i,N}_T =   G(\widetilde X^{i,N}_T, L^{N}(\underline{\widetilde  X}_T))
		\end{cases}
	\end{equation}
	with 
	\begin{equation*}
		 B(t, x, y, \xi ) := b\big(t,x, \Lambda(t,x, y, \mu, 0), \varphi(t,\xi)\big) 
	\end{equation*}
	\begin{align*}
		 F(t, x, y, \xi) &:=  \partial_{x} f\big(t,x, \Lambda(t,x, y, \mu, 0), \varphi(t,\xi) \big)  +  \partial_{x} b\big(t,x,\Lambda(t,x, y, \mu, 0), \varphi(t,\xi)\big)y
	\end{align*}
	where $\mu$ is the first marginal of $\xi$ and
	\begin{equation*}
		G(x, \mu) = \partial_xg(x,\mu).
	\end{equation*}

	Let us now justify that the functions $ B$, $ F$ and $ G$ satisfy the conditions of Theorem \ref{thm:mom bound syst}.
	By assumptions \ref{a1}, \ref{a3} and Lipschitz--continuity of $\Lambda$, in order to prove Lipschitz--continuity of $B, F, G$ it suffices to show that for every $\xi, \xi' \in \cP_2(\RR^\xdim\times \RR^\xdim)$ it holds
	\begin{equation*}
		\cW_2\big( \varphi(t,\xi), \varphi(t,\xi') \big) \le C\big( \cW_2(\xi, \xi') + \cW_2(\mu, \mu') \big)
	\end{equation*}
	where $\mu, \mu'$ are the first marginals of $\xi$ and $\xi'$, respectively.
	In fact, using Kantorovich duality theorem, see \cite[Theorem 5.10]{Vil2} that
	\begin{align*}
		&\cW_2^2\big(\varphi(t,\xi), \varphi(t,\xi')\big)\\
		 & = \sup\Big(\int_{\RR^\xdim\times \RR^m} h_1(x,y)\varphi(t,\xi)(dx,dy) - \int_{\RR^\xdim\times \RR^m} h_2(x',y')\varphi(t,\xi')(dx', dy') \Big)\\
					& = \sup\Big(\int_{\RR^\xdim\times \RR^\xdim} h_1(x,\Lambda(t,x,y, \mu))\xi(dx,dy) - \int_{\RR^\xdim\times \RR^\xdim} h_2(x',\Lambda(t, x', y',\mu'))\xi'(dx', dy') \Big)
	\end{align*}
	with the supremum over the set of bounded continuous functions $h_1,h_2:\RR^\xdim\times\RR^m\to \RR$ such that $h_1(x,y) -h_2(x',y') \le |x-x'|^2 + |y - y'|^2$ for every $(x,y), (x', y')\in \mathbb{R}^\xdim\times \RR^m$, which, by Lipschitz--continuity of $\Lambda$ implies that we have the following bound: $h_1(x, \Lambda(t, x, y, \mu)) - h_2( x', \Lambda(t, x',y', \mu'))\le |x -x'|^2 + |\Lambda(t, x, y, \mu) - \Lambda(t, x',y', \mu')|^2\le C\big( |x- x'|^2 + |y-y'|^2 + \cW_2^2(\mu, \mu') \big)$.
	This shows that
	\begin{align*}
		\cW_2^2\big(\varphi(t,\xi), \varphi(t,\xi^\prime)\big) \le \sup\Big( \int_{\mathbb{R}^l\times\mathbb{R}^l} \tilde h_1(x,y)\xi(dx,dy) - \int_{\mathbb{R}^l\times\mathbb{R}^l}\tilde h_2(x', y')\xi'(dx', dy') \Big)
	\end{align*}
	with the supremum over functions $\tilde h_1,\tilde h_2$ such that $\tilde h_1(x,y) - \tilde h_2(x', y') \le C\big(|x-x'|^2 + |y - y'|^2 + \cW_2^2(\mu, \mu') \big)$.
	Hence, applying Kantorovich duality once again yields
	\begin{align*}
		\cW_2^2\big(\varphi(t,\xi), \varphi(t,\xi^\prime)\big) \le C\inf\iint |x-x'|^2 + |y-y'|^2 + \cW_2(\mu, \mu')\,d\pi
	\end{align*}
	with the infimum over probability measures $\pi$ with first and second marginals $\pi_1 = \xi$ and $\pi_2 = \xi'$.
	This yields the result by definition  of $\cW_2(\xi, \xi')$.
	Therefore, $B,F$ and $G$ are Lipschitz continuous.

	That $B,F$ and $G$ are of linear growth follows by \ref{a3} and Lemma \ref{lem:Lambda lipschitz}.
	Therefore, the functions $B, F$ and $ G$ satisfy \ref{b1}-\ref{b2} with a constant $L_f$ which does not depend on $N$.
	As a consequence, it follows from \cite[Theorem 4.2]{MR3752669} that the equation \eqref{eq:auxiliary fbsde} admits a unique solution if $T$ is small enough.
 
	Similarly, by \cite[Theorem 4.24]{MR3752669} the following McKean-Vlasov FBSDE admits a unique solution $(X, Y, Z) \in \mathcal{S}^{2}(\mathbb{R}^\xdim)\times \mathcal{S}^{2}(\mathbb{R}^\xdim)\times \mathcal{H}^{2}(\mathbb{R}^{\xdim\times d})$ when $T$ is small enough:
	\begin{equation}
	\label{eq:MkV.in.proof}
		\begin{cases}
			dX_t = B(t, X_t, Y_t, \mathcal{L}(X_t ,Y_t) )\,dt + \sigma\,dW^i_t\\
			dY_t = - F(t, X_t, Y_t, \mathcal{L}(X_t ,Y_t) )\,dt + Z_t\,dW^i_t\\
			X_0\sim \mu_0, \,\,\, Y_T = \partial_xg(X_T, \mu_{X_T}).
		\end{cases}
	\end{equation}
	Thus, it follows from Theorem \ref{thm:mom bound syst} that there is a constant $\delta>0$ such that if $T\le \delta$, then for all  $N\in\mathbb{N}$ we have
	\begin{align*}
		E\bigg[\sup_{t\in [0,T]}|X_t - \widetilde X^{i,N}_t |^2 \bigg]+ E\Big[|Y_t - \widetilde Y^{i,N}_t |^2 \Big]\le C(r_{N,m+\xdim, k} + r_{N,\xdim,k})
	\end{align*}
	for some constant $C>0$ which does not depend on $N$, and where $r_{N,\xdim,k}$ is defined in \eqref{eq:def_r-nmqp}.
	On the other hand, using Lipschitz continuity (and definitions) of $B, F$ and $G$, it can be checked using standard FBSDE estimates that if $T$ is small enough, we have
	\begin{equation}
	\label{eq:auxliary bound}
		E\bigg[\sup_{t\in [0,T]}|X^{i,\underline{\hat\alpha}}_t - \widetilde X^{i,N}_t |^2 \bigg]+ E\Big[|Y^{i,i}_t - \widetilde Y^{i,N}_t |^2 \Big]\le CE[K^N]
	\end{equation}
	with 
	\begin{equation}
	\label{eq:def.K}
		K^{i,N} := |\gamma^{i,N}|^2 + \int_0^T|\varepsilon^{i,N}_t|^2 + |\zeta^{i,N}_t|^2\,dt 
	\end{equation}
	for a constant $C$ that does not depend on $N$.
	Therefore, we obtain by triangular inequality that
	\begin{align}
	\label{eq:diff xi x}
		E\bigg[\sup_{t\in [0,T]}|X^{i,\underline{\hat\alpha}}_t -  X_t |^2 \bigg]+ E\Big[|Y^{i,i}_t -  Y_t |^2 \Big] \le C\Big(E[K^N] + r_{N,m+\xdim, k} + r_{N,\xdim,k}\Big).
	\end{align}
	Let us check that $K^N$ converges to zero in expectation at the rate $N^{-1}$.
	By definition of $\varepsilon^N$, linear growth of $\partial_\mu f$ and boundedness of $\partial_\mu b$, we have
	\begin{align*}
		&E\bigg[\int_0^T|\varepsilon_t^N|^2\,dt \bigg]\\ 
		&\le CE\bigg[\int_0^T\frac{1}{N^2}\Big(1 + |X^{i,\underline{\hat\alpha}}_u|^2 + |X^{i,\underline{\hat\alpha}}_u|^2 +\frac1N\sum_{j=1}^N|X^{j,\underline{\hat\alpha}}_u|^2  \Big) + \frac1N\sum_{j=1}^N|Y^{i,j}_u|^2\,du  \bigg].
	\end{align*}
	Thus, by Lemma \ref{lem:integrability} and Proposition \ref{prop:cv-Yij-0} it holds that
	\begin{equation*}
	 	E\bigg[\int_0^T|\varepsilon_t^N|^2\,dt \bigg] \le \frac{C}{N}.
	\end{equation*} 
	Similarly, using linear growth of $ \partial_\nu f$ and boundedness of $\partial_\nu b$ we also obtain
	\begin{align}
	\notag
	 	E\bigg[\int_0^T|\zeta^N_t|^2\,dt \bigg] &\le \frac{C}{N^2}E\bigg[\int_0^T1 + |X^{i,\hat{\underline\alpha}}_t| + \frac1N\sum_{j=1}^N|X^{j,\hat{\underline\alpha}}_t|^2 \,dt \bigg]\\\notag
	 	&\quad + \frac1N\sum_{i=1}^NE\bigg[ \int_0^T|Y^{i,j}_t|^2\,dt\bigg]\\
	 	\label{eq:estimate.for.zeta}
	 	&\le C/N.
	\end{align} 
		Since $\partial_\mu g$ is of linear growth, see assumption \ref{a4} we have
		\begin{align*}
			E\big[ |\gamma^{N}|^2 \big] &\le \frac{1}{N^2}E\Big[ |\partial_\mu g(X^{i,\underline{\hat\alpha}}_T, L^{N}(\underline X_T^{\underline\alpha}) )(X^j_T)|^2\Big]\\
			& \le \frac{C}{N^2}E\Big[1 + |X^{i,\underline\alpha}_T|^2 +|X^{j,\underline\alpha}_T|^2 + \frac1N\sum_{k=1}|X^{k,\underline{\hat\alpha}}_T|^2 \Big]\le  C/N^2,
	\end{align*}
	where the last inequality follows from Lemma \ref{lem:integrability}.
	These estimates allow to conclude that 
	\begin{equation}
	\label{eq:bound.K}
		E[K^{i,N}]\le CN^{-1}.
	\end{equation}

	Now, put $\hat\alpha_t := \Lambda(t, X_t, Y_t, \mathcal{L}(X_t),0)$.
	For ease of notation, we will omit the zero in the last component and simply write
	\begin{equation}
	\label{eq:rem MFE}
		\hat\alpha_t := \Lambda(t, X_t, Y_t, \mathcal{L}(X_t)).
	\end{equation}
	By Lipschitz continuity of $\Lambda$, it follows that
	\begin{align}
	\nonumber
		E[|\hat\alpha^i_t - \hat\alpha_t|^2]&=E\Big[\Big|\Lambda(t, X_t^{i,\underline{\hat\alpha}}, Y_t^i, L^{N}(X_t^{\underline{\hat\alpha}}),\zeta^N_t) -  \Lambda(t, X_t, Y_t, \mathcal{L}(X_t)) \Big|^2\Big]\\\nonumber
		 & \le  CE\Big[ |X^{i,\underline{\hat\alpha}}_t - X_t|^2 + |Y^{i,i}_t - Y_t|^2 + \cW_2(L^{N}(X_t^{\underline{\hat\alpha}}),\mathcal{L}(X_t)) + |\zeta^N_t|^2 \Big]\\\label{eq:lambdaLipschitz}
		&\le C \Big(r_{N,m+\xdim, k} + r_{N,\xdim,k} + E[K^N]\Big).
	\end{align}
	Therefore, since $E[K^N]\le CN^{-1}$, we have
	\begin{equation*}
		E[|\hat\alpha^i_t - \hat\alpha_t|^2] \le C (r_{N,m+\xdim, k} + r_{N,\xdim,k} ).
	\end{equation*}

	It remains to justify that $\hat\alpha$ is indeed the mean field equilibrium.
	We apply again Proposition \ref{prop:Pontryagin-MFG} to justify that $\hat\alpha$ is the mean field equilibrium, thus we first show that the mapping $t\mapsto \cL(X_t, \hat\alpha_t)$ is bounded and Borel measurable.
	The Borel measurability follows by Lipschitz continuity of $\Lambda$ since by definition of the Wasserstein distance it holds that
	\begin{align*}
		\cW_2^2\big(\cL(X_t, \hat\alpha_t), \cL(X_s,\hat\alpha_s) \big) & \le E\big[|X_t - X_s|^2 + |\hat\alpha_t  - \hat\alpha_s|^2 \big]\\
		& \le CE\big[|X_t - X_s|^2 + |Y_t  - Y_s|^2 \big]
	\end{align*}
	for all $s,t \in [0,T]$.
	The boundedness of the second moment follows by Lemma \ref{lem:Lambda lipschitz} and square integrability of solutions of the McKean-Vlasov equation (recall Lemma \ref{lem: moment d+5 system}).
	In fact, we have
	\begin{equation*}
		\sup_{t\in [0,T]}E[|X_t|^2 + |\hat\alpha_t|^2] \le C\Big(1 + \sup_{t\in [0,T]}E[|X_t|^2 + |Y_t|^2]\Big) \le C,
	\end{equation*}
	which proves the claim. 
	Now,
	notice that, written in terms of $b,f$ and $g$, the McKean-Vlasov  system \eqref{eq:MkV.in.proof} reads
	\begin{equation}
	\label{eq:MkV fbsde full}
		\begin{cases}
			dX_t = b\big(t, X_t, \hat\alpha_t, \mathcal{L}(X_t,\hat\alpha_t) \big) + \sigma\,dW_t^i\\
			dY_t = - \Big\{\partial_xf\big(t, X_t, \hat\alpha_t, \mathcal{L}(X_t, \hat\alpha_t) \big) + \partial_xb(t, X_t, \hat\alpha_t,\mathcal{L}(X_t,\hat\alpha_t))Y_t\Big\} \,dt + Z_t\,dW^i_t\\
			X_0\sim \mu_0, \,\,\, Y_T = \partial_xg(X_T, \mathcal{L}(X_T)), \,\,\, \hat\alpha_t = \Lambda(t, X_t, Y_t, \mathcal{L}(X_t)).
		\end{cases}
	\end{equation}	
	This is the adjoint equation \eqref{eq:adjoint process mfg} associated to the mean field game.
	Since the functions $x\mapsto g(x,\mu)$ and $(x,a)\mapsto H(t, X_t, a, \cL(X_t, \hat\alpha_t), Y_t):= f(t, x, a,\cL(X_t, \hat\alpha_t) ) + b(t, x, a, \cL(X_t, \hat\alpha_t))y$ are $P\otimes dt$-a.s. convex, and by Lemma \ref{lem:Lambda lipschitz} the process $\hat\alpha_t$ satisfies
	\begin{equation*}
		H(t, X_t, \hat\alpha_t, Y_t, \cL(X_t, \hat\alpha_t)) = \inf_{a\in \mathbb{A}}H(t, X_t, a, Y_t, \cL(X_t, \hat\alpha_t)).
	\end{equation*}
	Thus, it follows from Pontryagin's stochastic maximum principle, see Proposition \ref{prop:Pontryagin-MFG} that $\hat\alpha$ is a mean field equilibrium.
	This concludes the proof.
\end{proof}

We conclude this subsection with the proof of the convergence to mean field equilibria in the case where monotonicity properties are assumed.

\begin{proof}(of Theorem \ref{thm:main.limit.arbitrary.time})
	The proof of Theorem \ref{thm:main.limit.arbitrary.time} is similar to that of Theorem \ref{thm:main limit}, except for two points. 
	
	First, to get well-posedness of the equations \eqref{eq:auxiliary fbsde} and \eqref{eq:MkV.in.proof}, we use \cite{Peng-Wu99} and \cite{Ben-Yam-Zhang15}, respectively. (This is where the condition \eqref{eq:mon.con.b.H.g} in \ref{M} is needed.)
	
	Next, in the present case we rely on the abstract propagation of chaos result Theorem \ref{thm:chaos.T.arbitray} rather than Theorem \ref{thm:mom bound syst}.
	Notice however that, in the arguments of the proof of Theorem \ref{thm:main limit}, in addition to the application of Theorem \ref{thm:mom bound syst}, having a short enough time horizon $T$ was also needed to get the estimate \eqref{eq:auxliary bound}.
	Thus, if we prove an analogous estimate, the rest of the proof remains the same, with Theorem \ref{thm:chaos.T.arbitray} applied instead of Theorem \ref{thm:mom bound syst}.

	Here, we will show that
	\begin{equation}
	\label{eq:auxliary.T.arbitrary}_{}
		E\bigg[\int_0^T|X^{i,\underline{\hat\alpha}}_t - \widetilde X^{i,N}_t |^2 + |Y^{i,i}_t - \widetilde Y^{i,N}_t |^2 \, dt \bigg] \le C/N.
	\end{equation}
	This is the analogue of \eqref{eq:auxliary bound} in the previous proof. 
	The proof of this inequality follows the strategy of the proof of Theorem \ref{thm:chaos.T.arbitray}.
	To avoid repetitions we give only the main steps of the argument.
	Recall the notation $L_{b,x}, L_{b,a}, L_{b,\xi}$ of the Lipschitz constant of $b$ in its arguments $x, a$ and $\xi$, respectively.
	Since $\Lambda$ is $L_\Lambda$--Lipschitz with $L_\Lambda = L_f/2\gamma$, a quick inspection shows that
	\begin{align*}
	 	|b(t,x,\Lambda(t,x,y,\mu,\zeta),\xi) - b(t,x',\Lambda(t,x',y',\mu',\zeta'),\xi')| &\le L_{B,x}|x-x'| + L_{B,\xi}\cW_2(\xi,\xi')\\
	 	& + L_{B,y}(|y-y'| + |\zeta-\zeta'|) 
	 \end{align*} 
	 with $L_{B,x} := L_{b,x} + L_{b,a}L_\Lambda$; $L_{B,\xi}:= L_{b,\xi} + L_{b,a}L_\Lambda$ and $L_{B,y}:= 2L_{b,a}L_{\Lambda}$.

	We will use the shorthand notation $\Delta X^i:= X^{i,\underline{\alpha}} - \widetilde X^{i,N}$, $\Delta Y^i:= Y^{i,i} - \widetilde Y^{i,N}$ and $\Delta Z^{i,j}:= Z^{i,i,j} - \widetilde Z^{i,i,N}$.
	Applying It\^o's formula to $|\Delta X^i|^2$, it follows by the monotonicity property \eqref{eq:mon.con.b} and Lipschitz--continuity of $b$ and $\Lambda$ that for every $\varepsilon>0$,
	\begin{align}
	\notag
		|\Delta X^i_t|^2
		&\le 2\int_0^t\Big({\color{black}\frac{L_{B,y} + L_{B,\xi}}{2\varepsilon} + \frac{2L_{B,\xi} + L_{B,y}}{2}} -K_b \Big)|\Delta X^i_u|^2 + L_{B,\xi}\frac1N\sum_{j=1}^N|\Delta X^j_u|^2\,du\\
		\label{eq:mon.proof.x}
		&\quad  + \int_0^t\varepsilon L_{B,y}|\Delta Y^i_u|^2 + \varepsilon L_{B,\xi}\frac1N\sum_{j=1}^N|\Delta Y^j_u|^2 
		+ L_{B,y}|\zeta^{i,N}_u|^2 + L_{B,\xi}\frac1N\sum_{j=1}^N|\zeta^{j,N}_u|^2\,du.
	\end{align}
	Thus, this implies
	\begin{align}
	\label{eq:ave.deltax.tlarge.proof}
		\frac1N\sum_{j=1}^N|\Delta X^j_t|^2& \le 2e^{\delta(\varepsilon)T}\int_0^t \varepsilon (L_{B,y} + L_{B,\xi}) \frac1N\sum_{j=1}^N(|\Delta Y^j_u|^2 + |\zeta^{j,N}_u|^2) \,du
	\end{align}
	with
	\begin{equation*}
		\delta(\varepsilon) := {\color{black}\frac{L_{B,y} + L_{B,\xi}}{2\varepsilon} + \frac{2L_{B,\xi} + L_{B,y}}{2} +  L_{B,\xi}} -K_b.
	\end{equation*}
	On the other hand, for the backward processes we have
	\begin{align}
	\notag
		|\Delta Y_t^i|^2 & + E\bigg[\sum_{j=1}^N\int_t^T|\Delta Z^{i,j,N}_u |^2\,du \mid \cF^N_t\bigg]\\\notag
		& \le E\Big[2L_f^2\Big(|\Delta X^i_T|^2 + \frac1N\sum_{j=1}^N|\Delta X^j_T|^2\Big) + |\gamma^{i,N}|^2 \mid \cF^N_t\Big] \\\notag
		&\quad + L_fE\bigg[\int_t^T6|\Delta Y^i_u|^2 + |\Delta X^i_u|^2 + |\zeta^{i,N}_u|^2 + |\varepsilon^{i,N}_u|^2\\
		\label{eq:mon.proof.y}
		& \qquad + \frac1N\sum_{j=1}^N(|\Delta Y^j_u|^2 + |\Delta X^j_u|^2 + |\zeta^{j,N}_u|^2 + |\varepsilon^{j,N}_u|^2)\,du \mid \cF^N_t \bigg].
	\end{align}
	This implies that
	\begin{align*}
		\frac1N\sum_{j=1}^N|\Delta Y_t^j|^2 
		&\le e^{7L_fT}E\Big[4L_f^2 \frac1N\sum_{j=1}^N|\Delta X^j_T|^2 + |\gamma^{i,N}|^2 \mid \cF^N_t\Big] \\
		&\qquad + 2e^{7L_fT}L_fE\bigg[\int_t^T\frac1N\sum_{j=1}^N( |\Delta X^j_u|^2 + |\zeta^{j,N}_u|^2 + |\varepsilon^{j,N}_u|^2)\,du \mid \cF^N_t \bigg].
	\end{align*}		
	Therefore, integrating on both sides and using \eqref{eq:ave.deltax.tlarge.proof} yields
	\begin{align*}
		&\frac1N\sum_{j=1}^NE\bigg[\int_0^T|\Delta Y^j_t|^2\,dt\bigg]  \le \Gamma_{\varepsilon,T}\frac1N\sum_{j=1}^NE\bigg[\int_0^T|\Delta Y^j_t|^2\,dt\bigg]+\\
		&\quad \Big(\Gamma_{\varepsilon,T} + e^{7 L_fT}T (1+2L_f)\Big) 
		E\bigg[\frac1N\sum_{j=1}^N|\gamma^{j,N}|^2 + \int_0^T\frac1N\sum_{j=1}^N(|\zeta_u^{j,N}|^2 + |\varepsilon^{j,N}_u|^2)\,du \bigg]
	\end{align*}
	with
	\begin{equation*}
		\Gamma_{\varepsilon,T} := 16T \varepsilon e^{7L_fT}L_f^2e^{\delta(\varepsilon)T} .
	\end{equation*}
	Choosing $\varepsilon$ small enough and then $K_b$ large enough,
	that is, such that
	\begin{equation*}
		K_B\ge  {\color{black} \frac{L_{B,y} + L_{B,\xi}}{2\varepsilon} + \frac{2L_{B,\xi} + L_{B,y}}{2} +  L_{B,\xi} }
	\end{equation*}
	with $\varepsilon < (16T  e^{7L_fT}L_f^2)^{-1}$.
	We thus have
	\begin{align*}
		\frac1N\sum_{j=1}^NE\bigg[\int_0^T|\Delta Y^j_t|^2\,dt\bigg]  &\le
		 CE\bigg[\frac1N\sum_{j=1}^N|\gamma^{j,N}|^2 + \int_0^T\frac1N\sum_{j=1}^N|\zeta_u^{j,N}|^2 + |\varepsilon^{j,N}_u|^2\,du \bigg]\\
		 &\le \frac1N\sum_{j=1}^{N}E[K^{j,N}]\le C/N
	\end{align*}
	for a constant $C>0$, and where the latter inequality follows by \eqref{eq:bound.K}.
	With this bound at hand, we proceed as in the proof of Theorem \ref{thm:chaos.T.arbitray} to show \eqref{eq:auxliary.T.arbitrary}.
	In particular, we plug this back into \eqref{eq:mon.proof.x} and \eqref{eq:mon.proof.y}.
\end{proof}

\subsection{Proof of Theorem \ref{thm:concen Nash}}
The proof is based on the representation \eqref{eq:def alpha star} and the concentration inequalities proved in Section \ref{sec:concen inequ}.

To show the moment bound, we consider the solution of the auxiliary forward backward SDE \eqref{eq:auxiliary fbsde} introduced in the proof of Theorem \ref{thm:main limit} and denote as usual $(\underline{\widetilde X}, \underline{\widetilde Y}, \underline {\widetilde Z}) = (\widetilde X^{i,N}, \widetilde Y^{i,N},\widetilde Z^{i,i,N})_{i=1,\dots,N}$.
Put
\begin{equation*}
	\widetilde\alpha^{i,N}_t := \Lambda\big( t, \widetilde X_t^{i,N}, \widetilde Y^{i,N}_t, L^N(\underline{\widetilde X}_t) \big).
\end{equation*}
Then by the representation \eqref{eq:rem MFE} of the mean field equilibrium, we have $\cL(\alpha_t) = \psi(t,\cL(X_t, Y_t))$,  where $\psi$ is the function given by
\begin{equation*}
	\psi(t,\xi) = \xi\circ \Lambda(t, \cdot,\cdot,\xi_1)^{-1}
\end{equation*}
for all $\xi\in  \cP_2(\RR^\xdim\times\RR^\xdim)$ with $\xi_1$ the first marginal of $\xi$.
Similarly, we have $L^N(\underline{\widetilde \alpha}_t) = \psi(t,L^N(\underline{\widetilde X}_t, \underline{\widetilde Y}_t))$.
As argued in the proof of Theorem \ref{thm:main limit}, the function $\psi$ is Lipschitz continuous for the $2$-Wasserstein metric, as a consequence of Lipschitz continuity of $\Lambda$.
Therefore, we have
\begin{align*}
	E\Big[\cW_2\big(L^N(\underline{\hat\alpha}_t), \cL(\hat\alpha_t)\big)\Big] &\le E\Big[\cW_2\big(L^N(\underline{\hat\alpha}_t), L^N(\underline{\widetilde \alpha}_t)\big)\Big] + E\Big[\cW_2\big(L^N(\underline{\widetilde \alpha}_t),\cL(\hat\alpha_t) \big) \Big]\\
	&\le E\bigg[ \Big(\frac1N\sum_{i=1}^N|X^{i,\hat{\underline{\alpha}}}_t - \widetilde X^{i,N}_t|^2 + |Y^{i,i,N}_t - \widetilde Y^{i,N}_t|^2 + |\zeta^N_t|^2 \Big)^{1/2}\bigg]\\
	&\quad + E\Big[\cW_2\big(\psi(t, L^N(\underline{\widetilde X}_t, \underline{\widetilde Y}_t)) , \psi(t, \cL(X_t, Y_t)) \big) \Big]\\
	&\le CE[K^N + |\zeta^N_t|^2]^{1/2} +  CE\Big[\cW_2\big( L^N(\underline{\widetilde X}_t, \underline{\widetilde Y}_t), \cL(X_t, Y_t) \big)\Big].
\end{align*}
It was showed in the proof of Theorem \ref{thm:main limit} that $E[K^N + |\zeta^{i,N}_t|^2]\le CN^{-1}$, and since the coefficients $B, F$ and $G$ of the FBSDE \eqref{eq:auxiliary fbsde} are Lipschitz--continuous, it follows from Theorem \ref{thm:mom bound syst} that $E\big[\cW_2\big( L^N(\underline{\widetilde X}_t, \underline{\widetilde Y}_t), \cL(X_t, Y_t) \big)\big]\le C\left( r_{N,2\xdim,k} + r_{N,\xdim,k} \right)$ for all  $(t,N)\in[0,T]\times\mathbb{N}$.
Therefore, we get
\begin{equation}
\label{eq:mom.bound.Wasser}
	E\big[\cW_2(L^N(\underline{\hat\alpha}_t), \cL(\hat\alpha_t))\big] \le C(N^{-1} + r_{N,2\xdim,k} + r_{N,\xdim,k}),
\end{equation}
which yields the claimed moment bound.

We now turn to the proof of the deviation inequality.
Let $h: \mathbb{R}^{mN}\to \mathbb{R}$ be a $1$-Lipschitz function and put
\begin{equation*}
	\tilde h(\underline x,\underline y) :=  h\Big(\Lambda(x^i, y^{i,i}, L^{N}(\underline x),0)_{i=1,\dots, N} \Big).
\end{equation*}
Consider again the solution $(\underline{\widetilde X}, \underline{\widetilde Y}, \underline {\widetilde Z}) = (\widetilde X^{i,N}, \widetilde Y^{i,N},\widetilde Z^{i,i,N})_{i=1,\dots,N}$ of the auxiliary FBSDE \eqref{eq:auxiliary fbsde} introduced in the proof of Theorem \ref{thm:main limit}.
Then, we have by \eqref{eq:def alpha star}, Lipschitz--continuity of $\Lambda$ and Chebyshev's inequality that
\begin{align*}
	P\Big(h(\underline{\hat\alpha}_t) - E[h(\underline{\hat\alpha}_t)]\ge a \Big)& \le P\Big( h(\underline{\hat\alpha}_t) - \tilde h(\underline{\widetilde X}_t, \underline{\widetilde Y}_t)\ge a/3\Big) + P\Big(E[ \tilde h(\underline{\widetilde X}_t, \underline{\widetilde Y}_t) - h(\underline{\hat\alpha}_t)]\ge a/3 \Big)\\
	&\quad+ P\Big(\tilde h(\underline{\widetilde X}_t, \underline{\widehat Y}_t) - E[\tilde h(\underline{\widetilde X}_t, \underline{\widetilde Y}_t)] \ge a/3\Big)\\
	&\le \frac{C}{a^2}\sum_{i=1}^N E\Big[ | X^{i,\underline{\hat\alpha}}_t - \widetilde X^{i,N}_t |^2 + | Y^{i,i,\underline{\hat\alpha}}_t - \widetilde Y^{i,N}_t |^2   +|\zeta_t^{i,N}|^2 \Big]\\
	&\quad+ P\Big(\tilde h(\underline{\widetilde X}_t, \underline{\widetilde Y}_t) - E[\tilde h(\underline{\widetilde X}_t, \underline{\widetilde Y}_t)] \ge a/3\Big)\\
		&\le \frac{C}{a^2}NE\big[ K^N + |\zeta^{i,N}_t|^2 \big] + P\Big(\tilde h(\underline{\widetilde X}_t, \underline{\widetilde Y}_t) - E[\tilde h(\underline{\widetilde X}_t, \underline{\widetilde Y}_t)] \ge a/3\Big).
\end{align*}
We showed in the proof of Theorem \ref{thm:main limit} that $E[K^N + |\zeta^N_t|^2]\le CN^{-1}$.
It now remains to estimate the last term on the right hand side above.
This is done using arguments similar to those put forth in the proof of \cite[Theorem 7]{backward-chaos}.
In fact, on the probability space $(\Omega^N, \mathcal{F}^N, P^N)$, consider the following compact form of the FBSDE \eqref{eq:auxiliary fbsde}:
\begin{align*}
	\begin{cases}
		\underline {\widetilde X}_t = \underline x + \int_0^t\underline B(u, \underline {\widetilde X}_u, \underline {\widetilde Y}_u)\,du + \Sigma \underline W_t\\
		\underline{\widetilde Y}_t = \underline G(\underline{\widetilde X}_T) + \int_t^T\underline F(u, \underline{\widetilde X}_u, \underline{\widetilde Y}_u)\,du - \int_t^T\underline {\widetilde Z}_u\,d\underline W_u
	\end{cases}
\end{align*}
where we put
\begin{equation*}
	\underline B(t, \underline x, \underline y) := (B(t, x^i, y^i, L^N(\underline x, \underline y)))_{i=1, \dots, N}
\end{equation*}
and similarly define $\underline F$ and $\underline G$, and we put $\Sigma := \text{diag}(\sigma)$ and $\underline Z := \text{diag}(\underline Z^{i,\cdot},\dots, \underline Z^{N, \cdot})$.
Then, by Lemma \ref{lem:Talagrand for system}, if $T$ is small enough, then the law $\mathcal{L}(\underline {X}, \underline { Y})$ of  $(\underline {X}, \underline { Y})$ satisfies Talagrand's $T_2(C_{x,y})$ inequality with constant $C_{x,y}$ depending on $L_f, T$ and $|\sigma|$ given in the proof of Lemma \ref{lem:Talagrand for system}, see Equation \eqref{eq:Talagrand.fbsde.constant}.
Note in passing that the Lipschitz constant $L_f$ of $\underline B, \underline F, \underline G $ does not depend on $N$.
Therefore, it follows from \cite[Theorem 1.3]{Gozlan09} that there is a constant $K$ depending on $C_{x,y}$ and the Lipschitz constant of $\tilde h$ such that
\begin{equation*}
	P\Big(\tilde h(\widetilde {\underline X}_t, \widetilde {\underline Y}_t) - E[H(\widetilde {\underline X}_t, \widetilde {\underline Y}_t)]\ge a/3 \Big) \le e^{-Ka^2}.
\end{equation*}
The bound $P\big( |\hat\alpha^{i,N}_t| - E[|\hat\alpha^{i,N}|] \ge a \big) \le 2e^{-Ka^2}$ follows by taking $h$ to be the absolute value of the projection on the $i$-th component and $N\ge\frac{1}{a}e^{Ka^2}$.

To get \eqref{eq:concen Wasses}, first notice that the function $\underline x\mapsto \sqrt{N}\cW_2(L^N(\underline x), \mathcal{L}(\hat\alpha_t))$ is $1$-Lipschitz for the norm $\|\underline x\|_{2,N}:=(\sum_{i=1}^N|x_i|^2)^{1/2}$.
Thus,  we have
\begin{align*}
	&P\big(\cW_2(L^N(\underline{\hat\alpha}_t), \cL(\hat\alpha_t)) \big) \\
	&\quad \le P\big(\sqrt{N}\cW_2(L^N(\underline{\hat\alpha}_t), \cL(\hat\alpha_t)) - \sqrt{N}E\big[\cW_2(L^N(\underline{\hat\alpha}_t), \cL(\hat\alpha_t)) \big]\ge \sqrt{N}a/2 \big) \\
	&\qquad + P\big(E\big[\cW_2(L^N(\underline{\hat\alpha}_t), \cL(\hat\alpha_t)) \big]\ge a/2 \big)\\
	&\quad \le \frac{C}{a^2N^2} + e^{-KNa^2} + P\big(E\big[\cW_2(L^N(\underline{\hat\alpha}_t), \cL(\hat\alpha_t)) \big]\ge a/2 \big).
\end{align*}
By \eqref{eq:mom.bound.Wasser}, choosing $N$ large enough the last term on the right hand side vanishes.
This concludes the proof for $T$ small enough.

Under the additional condition \eqref{eq:bound.on.H.for.T.large}, the functions $\underline B, \underline F$ and $\underline G$ satisfy \ref{b1second}, thus the proof of the case $T$ arbitrary is the same, in view of the second part of Lemma \ref{lem:Talagrand for system} and \eqref{eq:auxliary.T.arbitrary}.
 Note that one needs to observe that if $h$ is Lipschitz--continuous, then so is the function $\omega\mapsto \int_0^Th(\omega(t))\,dt$.
\hfill $\Box$

\textbf{Acknowledgement.}
The authors thank Julio Backhoff, Daniel Lacker and Dylan Possama\"i for fruitful discussions and helpful comments.

\bibliographystyle{plainnat}
\bibliography{convergenceMFGC-bib,references-Concen_RM,references-MFGsupp.bib}

\begin{thebibliography}{46}
\providecommand{\natexlab}[1]{#1}
\providecommand{\url}[1]{\texttt{#1}}
\expandafter\ifx\csname urlstyle\endcsname\relax
  \providecommand{\doi}[1]{doi: #1}\else
  \providecommand{\doi}{doi: \begingroup \urlstyle{rm}\Url}\fi

\bibitem[Ambrosio et~al.(2008)Ambrosio, Gigli, and Savar\'{e}]{MR2401600}
Luigi Ambrosio, Nicola Gigli, and Giuseppe Savar\'{e}.
\newblock \emph{Gradient flows in metric spaces and in the space of probability
  measures}.
\newblock Lectures in Mathematics ETH Z\"{u}rich. Birkh\"{a}user Verlag, Basel,
  second edition, 2008.

\bibitem[Bardi and Priuli(2014)]{Bardi-Priuli}
Martino Bardi and Fabio~S. Priuli.
\newblock Linear-quadratic $n$-person and mean-field games with ergodic cost.
\newblock \emph{SIAM J. Control Optim.}, 52\penalty0 (5):\penalty0 3022--3052,
  2014.

\bibitem[Bartl and Tangpi(2020)]{T2bsde}
Daniel Bartl and Ludovic Tangpi.
\newblock Functional inequalities for forward and backward diffusions.
\newblock \emph{Math. Finance}, 30\penalty0 (1):\penalty0 287--309, 2020.

\bibitem[Bensoussan and Frehse(1984)]{Ben-Fre84}
Alain Bensoussan and J.~Frehse.
\newblock Nonlinear elliptic systems in stochastic game theory.
\newblock \emph{Journal f\"ur die reine und angewandte Mathematik},
  350:\penalty0 23--67, 1984.

\bibitem[Bensoussan et~al.(2015)Bensoussan, Yam, and Zhang]{Ben-Yam-Zhang15}
Alain Bensoussan, S.~C.~P. Yam, and Z.~Zhang.
\newblock Well-posedness of mean-field type forward-backward stochastic
  differential equations.
\newblock \emph{Stoch. Proc. Appl.}, 125\penalty0 (9):\penalty0 3327--3354,
  2015.

\bibitem[Bertucci et~al.(2019)Bertucci, Lasry, and Lions]{MR3941633}
Charles Bertucci, Jean-Michel Lasry, and Pierre-Louis Lions.
\newblock Some remarks on mean field games.
\newblock \emph{Comm. Partial Differential Equations}, 44\penalty0
  (3):\penalty0 205--227, 2019.

\bibitem[Bonnans et~al.(2019)Bonnans, Hadikhanloo, and
  Pfeiffer]{BonnansHadikhanlooPfeiffer2019}
Fr{\'e}d{\'e}ric~J. Bonnans, Saeed Hadikhanloo, and Laurent Pfeiffer.
\newblock Schauder estimates for a class of potential mean field games of
  controls.
\newblock arXiv:1902.05461, 2019.

\bibitem[Buckdahn et~al.(2009)Buckdahn, Djehiche, Li, and
  Peng]{Buck-Dje-Li-Peng09}
Rainer Buckdahn, Boualem Djehiche, Juan Li, and Shige Peng.
\newblock Mean-field backward stochastic differential equations: A limit
  approach.
\newblock \emph{Ann. Probab.}, 37\penalty0 (4):\penalty0 1524--1565, 2009.

\bibitem[Cardaliaguet et~al.(2019{\natexlab{a}})Cardaliaguet, Delarue, Lasry,
  and Lions]{cardaliaguet2019master}
P.~Cardaliaguet, F.~Delarue, J.M. Lasry, and P.L. Lions.
\newblock \emph{The Master Equation and the Convergence Problem in Mean Field
  Games: (AMS-201)}.
\newblock Annals of Mathematics Studies. Princeton University Press,
  2019{\natexlab{a}}.

\bibitem[Cardaliaguet(2017)]{Carda_AMO17}
Pierre Cardaliaguet.
\newblock The convergence problem in mean field games with a local coupling.
\newblock \emph{Appl. Math. Optim}, 76:\penalty0 177--215, 2017.

\bibitem[Cardaliaguet and Lehalle(2018)]{MR3805247}
Pierre Cardaliaguet and Charles-Albert Lehalle.
\newblock Mean field game of controls and an application to trade crowding.
\newblock \emph{Math. Financ. Econ.}, 12\penalty0 (3):\penalty0 335--363, 2018.

\bibitem[Cardaliaguet et~al.(2019{\natexlab{b}})Cardaliaguet, Delarue, Lasry,
  and Lions]{carda15}
Pierre Cardaliaguet, Fran\c{c}ois Delarue, J.-M. Lasry, and P.-L. Lions.
\newblock \emph{The Master Equation and the Convergence Problem in Mean-Field
  Game}.
\newblock Princeton University Press, 2019{\natexlab{b}}.

\bibitem[Carmona and Delarue(2018{\natexlab{a}})]{MR3752669}
Ren\'e Carmona and Fran\c{c}ois Delarue.
\newblock \emph{Probabilistic theory of mean field games with applications.
  {I}}, volume~83 of \emph{Probability Theory and Stochastic Modelling}.
\newblock Springer, Cham, 2018{\natexlab{a}}.
\newblock ISBN 978-3-319-56437-1; 978-3-319-58920-6.
\newblock Mean field FBSDEs, control, and games.

\bibitem[Carmona and Delarue(2018{\natexlab{b}})]{MR3753660}
Ren\'e Carmona and Fran\c{c}ois Delarue.
\newblock \emph{Probabilistic theory of mean field games with applications.
  {II}}, volume~84 of \emph{Probability Theory and Stochastic Modelling}.
\newblock Springer, Cham, 2018{\natexlab{b}}.
\newblock ISBN 978-3-319-56435-7; 978-3-319-56436-4.
\newblock Mean field games with common noise and master equations.

\bibitem[Carmona and Lacker(2015)]{MR3325272}
Ren\'{e} Carmona and Daniel Lacker.
\newblock A probabilistic weak formulation of mean field games and
  applications.
\newblock \emph{Ann. Appl. Probab.}, 25\penalty0 (3):\penalty0 1189--1231,
  2015.

\bibitem[Chan and Sircar(2015)]{MR3359708}
Patrick Chan and Ronnie Sircar.
\newblock Bertrand and {C}ournot mean field games.
\newblock \emph{Appl. Math. Optim.}, 71\penalty0 (3):\penalty0 533--569, 2015.

\bibitem[Delarue(2002)]{Delarue}
F.~Delarue.
\newblock On the existence and uniqueness of solutions to {F}{B}{S}{D}{E}s in a
  non-degenerate case.
\newblock \emph{Stoch. Proc. Appl.}, 99:\penalty0 209--286, 2002.

\bibitem[Delarue et~al.(2019)Delarue, Lacker, and Ramanan]{Del-Lac-Ram-19}
Fran\c{c}ois Delarue, Daniel Lacker, and Kavita Ramanan.
\newblock From the master equation to mean field game limit theory: a central
  limit theorem.
\newblock \emph{Electr. J. Probab.}, 24\penalty0 (51):\penalty0 1--54, 2019.

\bibitem[Delarue et~al.(2020)Delarue, Lacker, and Ramanan]{Del-Lac-Ram_Concent}
Fran\c{c}ois Delarue, Daniel Lacker, and Kavita Ramanan.
\newblock From the master equation to mean field game limit theory: Large
  deviations and concentration of measure.
\newblock \emph{Ann. Probab.}, 48\penalty0 (1):\penalty0 211--263, 2020.

\bibitem[Djellout et~al.(2004)Djellout, Guillin, and Wu]{Dje-Gui-Wu}
H.~Djellout, A.~Guillin, and L.~Wu.
\newblock Transportation cost-information inequalities and applications to
  random dynamical systems and diffusions.
\newblock \emph{Ann. Probab.}, 32\penalty0 (3B):\penalty0 2702--2732, 2004.

\bibitem[Feleqi(2013)]{MR3127148}
Ermal Feleqi.
\newblock The derivation of ergodic mean field game equations for several
  populations of players.
\newblock \emph{Dyn. Games Appl.}, 3\penalty0 (4):\penalty0 523--536, 2013.

\bibitem[Fischer(2017)]{Fischer17}
Markus Fischer.
\newblock On the connection between symmetric $n$-player games and mean field
  games.
\newblock \emph{Ann. Appl. Probab.}, 27\penalty0 (2):\penalty0 757--810, 2017.

\bibitem[Fournier and Guillin(2015)]{Fou-Gui15}
Nicolas Fournier and A.~Guillin.
\newblock On the rate of convergence in {W}asserstein distance of the empirical
  measure.
\newblock \emph{Probab. Theory Relat. Fields}, 162:\penalty0 707--738, 2015.

\bibitem[Gomes et~al.(2013)Gomes, Mohr, and Souza]{Gomes-Mohr-Sou13}
D.~A. Gomes, J.~Mohr, and R.~R. Souza.
\newblock Continuous time finite state mean field games.
\newblock \emph{Appl. Math. Optim}, 68\penalty0 (1):\penalty0 99--143, 2013.

\bibitem[Gomes and Sa\'{u}de(2014)]{MR3195844}
Diogo~A. Gomes and Jo\~{a}o Sa\'{u}de.
\newblock Mean field games models---a brief survey.
\newblock \emph{Dyn. Games Appl.}, 4\penalty0 (2):\penalty0 110--154, 2014.
\newblock ISSN 2153-0785.

\bibitem[Gomes and Voskanyan(2016)]{MR3489048}
Diogo~A. Gomes and Vardan~K. Voskanyan.
\newblock Extended deterministic mean-field games.
\newblock \emph{SIAM J. Control Optim.}, 54\penalty0 (2):\penalty0 1030--1055,
  2016.

\bibitem[Gomes et~al.(2014)Gomes, Patrizi, and Voskanyan]{MR3160525}
Diogo~A. Gomes, Stefania Patrizi, and Vardan Voskanyan.
\newblock On the existence of classical solutions for stationary extended mean
  field games.
\newblock \emph{Nonlinear Anal.}, 99:\penalty0 49--79, 2014.

\bibitem[Gozlan(2009)]{Gozlan09}
Nathael Gozlan.
\newblock A characterization of dimension free concentration in terms of
  transporation inequalities.
\newblock \emph{Ann. Probab.}, 37\penalty0 (6):\penalty0 2480--2498, 2009.

\bibitem[Graber and Bensoussan(2018)]{MR3755719}
P.~Jameson Graber and Alain Bensoussan.
\newblock Existence and uniqueness of solutions for {B}ertrand and {C}ournot
  mean field games.
\newblock \emph{Appl. Math. Optim.}, 77\penalty0 (1):\penalty0 47--71, 2018.

\bibitem[Hamad\`ene(1998)]{Hamadene98}
Said Hamad\`ene.
\newblock Backward-forward {SDE}'s and stochastic differential games.
\newblock \emph{Stoch. Proc. Appl.}, 77:\penalty0 1--15, 1998.

\bibitem[Hu et~al.(2019)Hu, Ren, and Yang]{Hu-Ren-Yang}
Kaitong Hu, Zhenjie Ren, and Junjian Yang.
\newblock Principal-agent problem with multiple principals.
\newblock \emph{Preprint}, 2019.

\bibitem[Huang et~al.(2006)Huang, Malham{\'e}, and Caines]{MR2346927}
Minyi Huang, Roland~P. Malham{\'e}, and Peter~E. Caines.
\newblock Large population stochastic dynamic games: closed-loop
  {M}c{K}ean-{V}lasov systems and the {N}ash certainty equivalence principle.
\newblock \emph{Commun. Inf. Syst.}, 6\penalty0 (3):\penalty0 221--251, 2006.
\newblock ISSN 1526-7555.

\bibitem[Kobeissi(2021)]{Kobeissi2019}
Ziad Kobeissi.
\newblock On classical solutions to the mean field game system of controls.
\newblock \emph{Comm. Partial Differential Equations}, 2021.

\bibitem[Kupper et~al.(2019)Kupper, Luo, and Tangpi]{Kup-Luo-Tang18}
Michael Kupper, Peng Luo, and Ludovic Tangpi.
\newblock Multidimensional {M}arkovian {FBSDEs} with superquadratic growth.
\newblock \emph{Stoch. Proc. Appl.}, 129\penalty0 (3):\penalty0 902--923, 2019.

\bibitem[Lacker(2016)]{MR3520014}
Daniel Lacker.
\newblock A general characterization of the mean field limit for stochastic
  differential games.
\newblock \emph{Probab. Theory Related Fields}, 165\penalty0 (3-4):\penalty0
  581--648, 2016.

\bibitem[Lacker(2020)]{LackerAAP}
Daniel Lacker.
\newblock On the convergence of closed-loop {N}ash equilibria to the mean field
  limit.
\newblock \emph{Ann. Appl. Probab.}, 30\penalty0 (4):\penalty0 1693--1761,
  2020.

\bibitem[Lasry and Lions(2006)]{MR2269875}
Jean-Michel Lasry and Pierre-Louis Lions.
\newblock Jeux \`a champ moyen. {I}. {L}e cas stationnaire.
\newblock \emph{C. R. Math. Acad. Sci. Paris}, 343\penalty0 (9):\penalty0
  619--625, 2006.

\bibitem[Lasry and Lions(2007)]{MR2295621}
Jean-Michel Lasry and Pierre-Louis Lions.
\newblock Mean field games.
\newblock \emph{Jpn. J. Math.}, 2\penalty0 (1):\penalty0 229--260, 2007.

\bibitem[Lauri\`ere and Tangpi(2019)]{backward-chaos}
Mathieu Lauri\`ere and Ludovic Tangpi.
\newblock Backward propagation of chaos.
\newblock \emph{Preprint: arXiv:1911.06835}, 2019.

\bibitem[Lauri\`ere and Tangpi(2021)]{pontryagin21}
Mathieu Lauri\`ere and Ludovic Tangpi.
\newblock Convergence of large population games to mean field games with
  interaction through the controls.
\newblock \emph{arXiv:2004.08351v2}, 2021.

\bibitem[Lions(2007-2012)]{PLLcollege}
P-L. Lions.
\newblock Cours du {C}oll{\`e}ge de {F}rance.
\newblock http://www.college-de-france.fr/default/EN/all/equ$_-$der/,
  2007-2012.

\bibitem[Ma et~al.(1994)Ma, Protter, and Yong]{Ma-Pro-Yong}
Jin Ma, Philip Protter, and Jiongmin Yong.
\newblock Solving forward-backward stochastic differential equations
  explicitely - a four step scheme.
\newblock \emph{Probab. Theory Relat. Field}, 98:\penalty0 339--359, 1994.

\bibitem[Pal(2012)]{Pal12}
Soumik Pal.
\newblock Concentration of multidimensional diffusions and their boundary local
  times.
\newblock \emph{Probab. Theory Relat. Fields}, 154\penalty0 (1-2):\penalty0
  225--254, 2012.

\bibitem[Peng and Wu(1999)]{Peng-Wu99}
Shige Peng and Z.~Wu.
\newblock Fully coupled forward-backward stochastic differential equations and
  applications to optimal control.
\newblock \emph{SIAM J. Control Optim.}, 37\penalty0 (3):\penalty0 825--843,
  1999.

\bibitem[Sznitman(1991)]{MR1108185}
Alain-Sol Sznitman.
\newblock Topics in propagation of chaos.
\newblock In \emph{\'{E}cole d'\'{E}t\'{e} de {P}robabilit\'{e}s de
  {S}aint-{F}lour {XIX}---1989}, volume 1464 of \emph{Lecture Notes in Math.},
  pages 165--251. Springer, Berlin, 1991.

\bibitem[Villani(2009)]{Vil2}
C.~Villani.
\newblock \emph{Optimal Transport: Old and New}, volume 338 of
  \emph{Grundlehren der Mathematischen Wissenschaften}.
\newblock Springer-Verlag, Berlin, Heidelberg, 2009.

\end{thebibliography}

\end{document}